\newtheorem{theorem}{Theorem}[section]
\newtheorem{proposition}[theorem]{Proposition}
\newtheorem{lemma}[theorem]{Lemma}
\newtheorem{corollary}[theorem]{Corollary}
\newtheorem{example}[theorem]{Example}
\theoremstyle{definition}
\newtheorem{definition}[theorem]{Definition}
\newtheorem{remark}[theorem]{Remark}
\newcommand{\res}{\mathop{\hbox{\vrule height 7pt width .5pt depth 0pt \vrule height .5pt width 6pt depth 0pt}}\nolimits}
\DeclareMathOperator{\spt}{spt}
\DeclareMathOperator{\Lip}{Lip}
\definecolor{zzttqq}{rgb}{0.6,0.2,0}
\definecolor{uququq}{rgb}{0.25,0.25,0.25}
\definecolor{qqqqff}{rgb}{0,0,1}
\definecolor{xdxdff}{rgb}{0.49,0.49,1}
\definecolor{ffffqq}{rgb}{1,1,0}
\definecolor{fftttt}{rgb}{1,0.2,0.2}
\definecolor{ffqqqq}{rgb}{1,0,0}
\definecolor{ffqqtt}{rgb}{1,0,0.2}
\definecolor{ffttqq}{rgb}{1,0.2,0}
\definecolor{xdxdff}{rgb}{0.49,0.49,1}
\definecolor{qqqqff}{rgb}{0,0,1}
\definecolor{ffttww}{rgb}{1,0.2,0.4}
\definecolor{ttqqcc}{rgb}{0.2,0,0.8}
\definecolor{ffqqww}{rgb}{1,0,0.4}
\definecolor{xdxdff}{rgb}{0.49,0.49,1}
\definecolor{qqqqff}{rgb}{0,0,1}
\definecolor{uququq}{rgb}{0.25,0.25,0.25}
\definecolor{ttffww}{rgb}{0.2,1,0.4}
\definecolor{ffqqqq}{rgb}{1,0,0}
\definecolor{qqfftt}{rgb}{0,1,0.2}
\definecolor{zzttqq}{rgb}{0.6,0.2,0}
\definecolor{ffffqq}{rgb}{1,1,0}
\definecolor{qqffww}{rgb}{0,1,0.4}
\definecolor{xdxdff}{rgb}{0.49,0.49,1}
\definecolor{qqqqff}{rgb}{0,0,1}
\definecolor{ffqqqq}{rgb}{1,0,0}
\definecolor{ttqqff}{rgb}{0.2,0,1}
\definecolor{ttqqcc}{rgb}{0.2,0,0.8}
\definecolor{uququq}{rgb}{0.25,0.25,0.25}
\definecolor{xdxdff}{rgb}{0.49,0.49,1}
\definecolor{qqqqff}{rgb}{0,0,1}
\definecolor{ffqqtt}{rgb}{1,0,0.2}
\definecolor{qqffqq}{rgb}{0,1,0}
\definecolor{qqfftt}{rgb}{0,1,0.2}
\definecolor{ffffqq}{rgb}{1,1,0}
\definecolor{ffttcc}{rgb}{1,0.2,0.8}
\definecolor{ffwwqq}{rgb}{1,0.4,0}
\definecolor{ffqqqq}{rgb}{1,0,0}
\definecolor{ffffqq}{rgb}{1,1,0}
\definecolor{qqfftt}{rgb}{0,1,0.2}
\definecolor{ffttcc}{rgb}{1,0.2,0.8}
\definecolor{ffwwqq}{rgb}{1,0.4,0}
\definecolor{ffqqqq}{rgb}{1,0,0}
\definecolor{ffqqtt}{rgb}{1,0,0.2}
\definecolor{ffffqq}{rgb}{1,1,0}
\definecolor{ttffqq}{rgb}{0.2,1,0}
\definecolor{wwwwww}{rgb}{0.4,0.4,0.4}
\definecolor{zzqqzz}{rgb}{0.6,0,0,6}
\definecolor{ffffqq}{rgb}{1,1,0}
\definecolor{qqffqq}{rgb}{0,1,0}
\definecolor{ttttff}{rgb}{0.2,0.2,1}
\definecolor{ffqqtt}{rgb}{1,0,0.2}
\definecolor{wwqqww}{rgb}{0.4,0,0,4}
\definecolor{wwffqq}{rgb}{0.4,1,0}
\definecolor{fffftt}{rgb}{1,1,0.2}
\definecolor{ccqqcc}{rgb}{0.8,0,0.8}
\definecolor{ffffww}{rgb}{1,1,0.4}
\definecolor{ttfftt}{rgb}{0.2,1,0.2}
\definecolor{fffftt}{rgb}{1,1,0.2}
\definecolor{ccqqww}{rgb}{0.8,0,0.4}
\definecolor{ffffww}{rgb}{1,1,0.4}
\definecolor{ttfftt}{rgb}{0.2,1,0.2}
\definecolor{uququq}{rgb}{0.25,0.25,0.25}
\definecolor{ffqqtt}{rgb}{1,0,0.2}
\definecolor{ttffff}{rgb}{0.2,1,1}
\definecolor{fffftt}{rgb}{1,1,0.2}
\definecolor{wwffqq}{rgb}{0.4,1,0}
\definecolor{qqqqff}{rgb}{0,0,1}
\definecolor{uququq}{rgb}{0.25,0.25,0.25}
\definecolor{ffqqtt}{rgb}{1,0,0.2}
\definecolor{fffftt}{rgb}{1,1,0.2}
\definecolor{wwffqq}{rgb}{0.4,1,0}
\definecolor{qqqqff}{rgb}{0,0,1}
\definecolor{ttfftt}{rgb}{0.2,1,0.2}
\definecolor{qqqqff}{rgb}{0,0,1}
\definecolor{ffqqtt}{rgb}{1,0,0.2}
\definecolor{fffftt}{rgb}{1,1,0.2}
\definecolor{ffffqq}{rgb}{1,1,0}
\definecolor{qqfftt}{rgb}{0,1,0.2}
\tikzset { domaine/.style 2 args={domain=#1:#2} }
\tikzset{
xmin/.store in=\xmin, xmin/.default=-3, xmin=-3,
xmax/.store in=\xmax, xmax/.default=3, xmax=3,
ymin/.store in=\ymin, ymin/.default=-3, ymin=-3,
ymax/.store in=\ymax, ymax/.default=3, ymax=3,
}
\definecolor{ffffqq}{rgb}{1,1,0}
\definecolor{ttfftt}{rgb}{0.2,1,0.2}
\definecolor{qqffcc}{rgb}{0,1,0.8}
\definecolor{ffwwqq}{rgb}{1,0.4,0}
\definecolor{ffqqqq}{rgb}{1,0,0}
\definecolor{ttffqq}{rgb}{0.2,1,0}
\definecolor{qqqqff}{rgb}{0,0,1}
\definecolor{ffqqtt}{rgb}{1,0,0.2}
\definecolor{ffffqq}{rgb}{1,1,0}
\definecolor{qqfftt}{rgb}{0,1,0.2}
\numberwithin{equation}{section}
\providecommand{\U}[1]{\protect\rule{.1in}{.1in}}
\definecolor{linkcolor}{rgb}{0.00,0.50,0.00}
\providecommand{\U}[1]{\protect\rule{.1in}{.1in}}
\definecolor{ffffqq}{rgb}{1,1,0}
\definecolor{qqffqq}{rgb}{0,1,0}
\definecolor{ttttff}{rgb}{0.2,0.2,1}
\definecolor{ffqqtt}{rgb}{1,0,0.2}
\def\XXint#1#2#3{{\setbox0=\hbox{$#1{#2#3}{\int}$}
     \vcenter{\hbox{$#2#3$}}\kern-.5\wd0}}
\newcommand{\twopartdef}[4]
{
\left\{
		\begin{array}{ll}
			#1 & #2 \\
			#3 & #4
		\end{array}
	\right.
}
\newcommand{\threepartdef}[6]
{
	\left\{
		\begin{array}{lll}
			#1 & \mbox{if } #2 \\
			#3 & \mbox{if } #4 \\
			#5 & \mbox{if } #6
		\end{array}
	\right.
}
\author{Samer Dweik and Wojciech G\'{o}rny}
\address{S. Dweik: Department of Mathematics, University of British Columbia, Vancouver, BC Canada V6T 1Z2.\\
W. G\'{o}rny: Faculty of Mathematics, Informatics and Mechanics, University of Warsaw, Warsaw, Poland.}
\email{dweik@math.ubc.ca, w.gorny@mimuw.edu.pl}
\date{\today}
\subjclass[2010]{35J20, 35J25, 35J75, 35J92}
\title{Least gradient problem on annuli}
\keywords{Least Gradient Problem, Optimal Transport, Non-convex domains}
\begin{document}

\begin{abstract}
We consider the two dimensional BV least gradient problem on an annulus with given boundary data $g \in BV(\partial\Omega)$. Firstly, we prove that this problem is equivalent to the optimal transport problem with source and target measures located on the boundary of the domain. Then, under some admissibility conditions on the trace, we show that there exists a unique solution for the BV least gradient problem. Moreover, we prove some $L^p$ estimates on the corresponding minimal flow of the Beckmann problem, which implies directly $W^{1,p}$ regularity for the solution of the BV least gradient problem.
\end{abstract}
\maketitle
\section{Introduction}
 
In this paper, we are interested in the study of the planar least gradient problem (see, for instance, \cite{Dweik,Gorny0,Gorny,Sternberg}):
\begin{equation}\label{BV least gradient problem classical}
\min\bigg\{\int_{\Omega} |D u|\,:\,u \in BV(\Omega),\,T u=g\bigg\},
\end{equation}
where $\Omega \subset \mathbb{R}^2$ is an open bounded set with Lipschitz boundary and $T: BV(\Omega) \rightarrow L^1(\partial\Omega)$ denotes the trace operator. This problem is typically considered under the assumption of strong convexity of $\Omega$; in this paper, we aim to relax this assumption and provide an analysis of the least gradient problem on an annulus.

In \cite{Sternberg}, the authors prove existence and uniqueness of the solution $u$ for the least gradient problem \eqref{BV least gradient problem classical} in the case where the domain $\Omega$ is strictly convex and the boundary datum $g \in  C(\partial\Omega)$. In addition, the case where $f \in L^1(\partial\Omega)$ is also studied in \cite{Mazon}, where the authors prove existence of a solution for the following relaxation of \eqref{BV least gradient problem classical}:
\begin{equation}\label{BV least gradient problem classical relaxation}
\min\bigg\{\int_{\Omega} |D u| + \int_{\partial\Omega} |T u - g|\,\mathrm{d}\mathcal{H}^1(x)\,:\,u \in BV(\Omega)\bigg\}.
\end{equation}\\
We note that if a solution $u$ of \eqref{BV least gradient problem classical relaxation} satisfies $T u =g$, then $u$ is clearly a solution of \eqref{BV least gradient problem classical}. Moreover, the authors of \cite{Spradlin} provide an example of boundary data $g \in L^\infty(\partial\Omega)$ such that a solution $u$ for \eqref{BV least gradient problem classical} does not exist. On the other hand, the problem \eqref{BV least gradient problem classical} has a solution as soon as $g \in BV(\partial\Omega)$ and the domain $\Omega$ is strictly convex (see, for instance, \cite{Gorny,Dweik,Moradifam}). However, if we relax the assumption of continuity of boundary data, we lose uniqueness of minimizers.  

The first attempt to prove existence of minimizers when the domain is not strictly convex has been made in \cite{Sabra}, where the authors considered a domain $\Omega$ which is convex, but not strictly convex. Unfortunately, in this case we cannot expect, in general, existence of a solution for \eqref{BV least gradient problem classical}, even if the boundary datum $g$ is smooth. To see that, let us consider a square $\Omega:=\left[0,1\right]^2$ and take $h \in C^\infty_c((0,1))$. We define $g(x_1,1)=h(x_1)$, for all $x_1 \in [0,1]$, and $g(x_1,x_2)=0$, if $x_2 <1$. We see that the level sets of a solution to \eqref{BV least gradient problem classical relaxation} are contained in the segment $[0,1] \times \{1\}$, hence it does not satisfy $T u =g$. This led the authors of \cite{Sabra} to prove existence and uniqueness of solutions to the problem \eqref{BV least gradient problem classical} under some admissibility conditions on the behavior of boundary data on the flat parts of $\partial\Omega$. In this paper, we will follow a similar approach and provide a set of admissibility conditions under which we will prove existence and uniqueness of solutions. 

We will approach this problem using a link between the least gradient problem and the optimal transport problem. For a convex domain $\Omega$, the authors of \cite{Dweik,Gorny0} prove that the problem \eqref{BV least gradient problem classical} is equivalent to the Beckmann problem \cite{Beckmann} with source and target measures located on the boundary $\partial\Omega$, which is in turn related to the optimal transport problem with Euclidean cost. In other words, the problem \eqref{BV least gradient problem classical} is equivalent to:
\begin{equation} 
 \label{BeckmannIntro}\min\bigg\{\int_{\bar{\Omega}} |v|\,:\,v \in \mathcal{M}^d(\bar{\Omega}),\,\nabla \cdot v=0\,\,\,\,\mbox{and}\,\,\,v \cdot \nu =f\,\,\mbox{on}\,\,\,\partial\Omega\bigg\}.
\end{equation}
The equivalence between the least gradient problem \eqref{BV least gradient problem classical} and the Beckmann problem \eqref{BeckmannIntro} follows from the fact that if $u \in BV(\Omega)$ with $T u = g$, then we see easily that $v:=R_{\frac{\pi}{2}} D u$ is an admissible flow in \eqref{BeckmannIntro} with $f=\partial_{\tau}g$, where $\partial_\tau g$ denotes the tangential derivative of $g$. On the other hand, given a flow $v$ such that $\nabla \cdot v = 0$ and $v \cdot \nu = f$ on $\partial\Omega$, there is a function $u$ such that $v=R_{\frac{\pi}{2}} D u$. Furthermore, if $|v|$ gives zero mass to the boundary (i.e. $|v|(\partial\Omega)=0$), then $T u = g$. In other words, there is a one-to-one correspondence between vector measures $Du$ in \eqref{BV least gradient problem classical relaxation}
(considered as measures on $\bar{\Omega}$, so that we also include the part of the derivative of $u$ which is on the boundary, i.e. the possible jump from $u_{|\partial\Omega}$ to $g$) and vector measures $v$ in \eqref{BeckmannIntro}. In particular, this implies that if $v$ is an optimal flow for the Beckmann problem \eqref{BeckmannIntro} such that $|v|$ gives zero mass to the boundary, then a solution $u$ for the problem \eqref{BV least gradient problem classical relaxation} turns out to be a solution for \eqref{BV least gradient problem classical}.

In addition, it is well known that the Beckmann problem \eqref{BeckmannIntro} is completely equivalent to the Monge-Kantorovich \cite{Kanto,Monge} optimal transportation problem (see, for instance, \cite{San2}): 
\begin{equation}\label{KantoIntro}\min\bigg\{\int_{\bar{\Omega} \times \bar{\Omega}} |x-y|\,\mathrm{d}\gamma\,:\,\gamma \in  \mathcal{M}^+(\bar{\Omega} \times \bar{\Omega}),\,(\Pi_x)_{\#}\gamma=f^{+} \,\,\mbox{and}\,\,(\Pi_y)_{\#} \gamma =f^{-}\bigg\},
\end{equation}\\
where $f^+$ and $f^-$ are the positive and negative parts of $f$. Moreover, the dual of \eqref{KantoIntro} is the following:
\begin{equation}\label{Kanto dualIntro} 
    \sup\bigg\{\int_{\bar{\Omega}} \phi\,\mathrm{d}(f^+ - f^-)\,:\,\phi \in  \Lip_1(\Omega)\bigg\}.
\end{equation}\\
From \cite{San2}, we have that every optimal flow $v$ for \eqref{BeckmannIntro} is of the form $v=-|v|\nabla \phi$, where $\phi$ is the Kantorovich potential (i.e. a maximizer of \eqref{Kanto dualIntro}). Moreover, the solution $v$ is unique as soon as at least one between $f^+$ and $f^-$ is in $L^1(\Omega)$ and $|v| \in L^p(\Omega)$ provided that $f^\pm \in L^p(\Omega)$ (for every $p \in [1,\infty]$); unfortunately, this is not the case here since our measures $f^+$ and $f^-$ are concentrated on $\partial\Omega$ and we need to prove this in our setting. Anyway, we note that as soon as we prove uniqueness of the optimal flow $v$ for \eqref{BeckmannIntro}, then we get directly uniqueness of the solution $u$ (if it exists) for \eqref{BV least gradient problem classical}. In addition, the $L^p$ summability of the minimal flow $v$ implies eventually a $W^{1,p}$ regularity for the solution $u$ of the BV least gradient problem \eqref{BV least gradient problem classical}. In \cite{Dweik}, the authors prove that the problem \eqref{BeckmannIntro} has a unique minimizer $v$ as soon as $f^+$ or $f^-$ is atomless and $\Omega$ is strictly convex. If both $f^+,\,f^-$ are in $L^p(\Omega)$ with $p \leq 2$, then $|v| \in L^p(\Omega)$. In other words, the BV least gradient problem \eqref{BV least gradient problem classical} reaches a minimum as soon as \,$\Omega$\, is strictly convex and, the solution \,$u$\, of \eqref{BV least gradient problem classical} is unique provided that $g \in C(\partial\Omega)$. And, the solution $u$ is in $W^{1,p}(\Omega)$ as soon as $g \in W^{1,p}(\partial\Omega)$ and $p \leq 2$. Moreover, the authors of \cite{Dweik} give a counter-example to the $W^{1,p}$ regularity of $u$ for $p>2$. In addition, we note that there are some $C^{0,\alpha}$ results about the solution $u$ of the problem \eqref{BV least gradient problem classical} (see, for instance, \cite{Dweik,GornyNonstconv,Sternberg}). More precisely, we have $g\in C^{0,\alpha}(\partial\Omega) \Rightarrow u \in C^{0,\frac{\alpha}{2}}(\Omega)$ and $g\in C^{1,\alpha}(\partial\Omega) \Rightarrow u \in C^{0,\frac{\alpha +1}{2}}(\Omega)$.\\

In this paper, we consider the least gradient problem \eqref{BV least gradient problem classical} on an annulus, so the domain $\Omega$ is not convex; even its boundary is not connected. To be more precise, let $\Omega_\pm$ be two bounded strictly convex domains such that $\Omega_- \subset \subset \Omega_+$. Then, we consider the planar least gradient problem on an annulus $\Omega = \Omega_+ \backslash \overline{\Omega_-}$. As the annulus is not strictly convex (even convex), we do not have any general results concerning existence of minimizers for \eqref{BV least gradient problem classical}. Again, we may point out very easy boundary data such that the corresponding least gradient problem \eqref{BV least gradient problem classical} has no minimizer - suppose that $g|_{\partial\Omega_+} \equiv 1$ and $g|_{\partial\Omega_-} \equiv 0$. However, the tangent derivative of $g$ equals zero and there exists a (zero) solution of the Beckmann problem. Hence, we will not consider the least gradient problem, but rather prove equivalence between the Beckmann problem \eqref{BeckmannIntro} and the following problem
\begin{equation}\label{BV least gradient problem on annulus}
\min\bigg\{\int_{\Omega} |Du|\,:\,u \in BV(\Omega), \, \partial_\tau(Tu) = f \bigg\}
\end{equation}\\
and then pass from this problem to the usual least gradient problem \eqref{BV least gradient problem classical} for an admissible function $\widetilde{g} \in BV(\partial\Omega)$ such that $\partial_\tau \widetilde{g} = f$. In other words, we allow for certain vertical shifts of the values of $g$ on each of the connected components of $\partial\Omega$. As the fundamental group of an annulus is nontrivial, it is not obvious that from a divergence-free vector field $v$ we may recover a function $u$ such that $v = R_{\frac{\pi}{2} Du}$ and the method introduced in \cite{Gorny0} applies; another difficulty is that if the domain $\Omega$ is not convex, it is not obvious if we have the equivalence between \eqref{BeckmannIntro} and \eqref{KantoIntro}  (see \cite{Dweik,Gorny0}). We deal with these issues in Section \ref{Sec. 3}.

As the non-connectedness of the boundary plays a role, we do not aim to prove a general result concerning existence and uniqueness of minimizers, but rather a set of quite general sufficient conditions that imply existence of a solution for \eqref{BV least gradient problem on annulus} (it may be hard to find a set of conditions which is both necessary and sufficient to get existence of minimizers for \eqref{BV least gradient problem on annulus}; see also the work of Rybka and Sabra \cite{Sabra} which concerns the case where the domain is convex). Then, under the same structural hypotheses, we pass from a solution to problem \eqref{BV least gradient problem on annulus} to a solution of the usual BV least gradient problem \eqref{BV least gradient problem classical}. We will address these issues in Section \ref{Sec. 4}.

Another problem is the regularity of least gradient functions. As we are using techniques derived from optimal transport, we want to extend the results proved in \cite{Dweik} concerning $W^{1,p}$ regularity of least gradient functions. However, these require uniform convexity of $\Omega$. Under the structural hypotheses introduced in Section \ref{Sec. 4}, we work around this problem and prove $L^p$ summability of the transport density, which translates to $W^{1,p}$ regularity of solutions to the least gradient problem. We will address these issues in Section \ref{Sec. 5}.

Finally, in Section \ref{Sec. 6}, we discuss the limits and possible extensions to the approach presented in this paper. We focus on two issues: the first one is the optimality of our structural assumptions and possible extensions to general Lipschitz domains; the second one is validity of our results for strictly convex norms on $\mathbb{R}^2$ other than the Euclidean norm.

\section{Preliminaries}\label{Sec. 2}

This Section serves two purposes. In the first part, we recall basic properties of least gradient functions. In the second part, we study what is the structure of least gradient functions on an annulus $\Omega$. Here and in the whole paper, we introduce the following notation.

\begin{definition}\label{dfn:annulus}
We say that $\Omega \subset \mathbb{R}^2$ is an annulus, if $\Omega = \Omega_+ \backslash \overline{\Omega}_-$, where $\Omega_\pm$ are open bounded strictly convex subsets of $\mathbb{R}^2$ such that $\Omega_- \subset \subset \Omega_+$. Let $g \in L^1(\partial\Omega)$. Then, we will denote by $g_{\pm}$ the restrictions $g_\pm = g_{|\partial\Omega_\pm} \in L^1(\partial\Omega_\pm)$ and denote by $T_\pm$ the trace operator $T: BV(\Omega) \rightarrow L^1(\partial\Omega) = L^1(\partial\Omega_+) \oplus L^1(\partial\Omega_-)$ composed with a projection onto $L^1(\partial\Omega_\pm)$. 

Moreover, if $g_\pm \in BV(\partial\Omega_\pm)$, we will denote by $f_\pm = \partial_\tau g_\pm \in \mathcal{M}(\partial\Omega_\pm)$ its tangential derivative and decompose it into a positive part $f_\pm^+$ and negative part $f_\pm^-$. 
\end{definition}

This paper is devoted to the study of least gradient functions on annuli, nevertheless in the following results we will clearly state if they are valid only for annuli, only for Lipschitz domains, or for general open sets.

\subsection{Least gradient functions}
 In this subsection, we recall the definition and some properties of least gradient functions (see also \cite{Bombieri,Giusti}). Then, we prove some results concerning pointwise properties of precise representatives of least gradient functions.

\begin{definition}
We say that $u \in BV(\Omega)$ is a function of least gradient if for every compactly supported $\phi \in BV(\Omega)$, we have 
\begin{equation*}
\int_\Omega |D u| \leq \int_\Omega |D(u + \phi)|.
\end{equation*}
Let us note that due to \cite[Theorem 2.2]{Ziemer}, we may equivalently assume that $\phi$ has trace zero. We also say that $u$ is a solution of the least gradient problem for $g \in L^1(\partial\Omega)$ in the sense of traces, if $u$ is a least gradient function such that $Tu = g$.
\end{definition}

To deal with regularity of least gradient functions, it is convenient to consider superlevel sets of $u$, i.e. sets of the form $\partial \{ u > t \}$ for $t \in \mathbb{R}$. A classical theorem states that

\begin{theorem}\label{twierdzeniezbgg}
$ ($\cite[Theorem 1]{Bombieri}$) $ \\
Suppose $\Omega \subset \mathbb{R}^d$ is open. Let $u$ be a function of least gradient in $\Omega$. Then, the set $\partial \{ u > t \}$ is minimal in $\Omega$, i.e. $\chi_{\{ u > t \}}$ is of least gradient, for every $t \in \mathbb{R}$. 
\end{theorem}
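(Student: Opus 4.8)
The plan is to use the coarea formula, which expresses the total variation as a superposition over superlevel sets: for every $u\in BV(\Omega)$ one has
\[
\int_\Omega |Du| \;=\; \int_{\R} P\big(\{u>t\};\Omega\big)\,\dd t,
\]
where $P(\cdot;\cdot)$ denotes perimeter. Heuristically this says that ``$u$ is of least gradient'' is the fibrewise statement ``every superlevel set $E_t:=\{u>t\}$ is area minimizing''. I would prove the theorem by contradiction: supposing $E_{t_0}$ fails to be minimal for some $t_0\in\R$, I would build a competitor $v\in BV(\Omega)$ with $\{v\neq u\}$ compactly contained in $\Omega$ and $\int_\Omega|Dv|<\int_\Omega|Du|$, contradicting that $u$ is a function of least gradient.

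The first point is that a single level cannot by itself lower the coarea integral, so non‑minimality of $E_{t_0}$ must be promoted to non‑minimality on an interval of levels; this is where lower semicontinuity of the perimeter enters. By assumption there are a ball $B\subset\subset\Omega$ and a set $F$ coinciding with $E_{t_0}$ outside $B$ with $P(F;B)\le P(E_{t_0};B)-3c$ for some $c>0$ (using submodularity of the perimeter, $P(E\cap F)+P(E\cup F)\le P(E)+P(F)$, one may even assume $F\subseteq E_{t_0}$ or $F\supseteq E_{t_0}$, though this is not needed below). Since $\chi_{\{u>t\}}\to\chi_{E_{t_0}}$ in $L^1_{\mathrm{loc}}(\Omega)$ as $t\downarrow t_0$, lower semicontinuity of $G\mapsto P(G;B)$ gives $P(\{u>t\};B)\ge P(E_{t_0};B)-c$, hence
\[
P(F;B)\;\le\;P(\{u>t\};B)-2c \qquad\text{for all } t\in(t_0,t_0+\delta),
\]
for some $\delta>0$.

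Next I would glue. For $t\in(t_0,t_0+\delta)$ put $F_t:=(F\cap B)\cup(\{u>t\}\setminus B)$; these sets coincide with $\{u>t\}$ outside $B$, are monotone nonincreasing in $t$ (because $\{u>t\}$ is), and satisfy $P(F_t;B)=P(F;B)$, so — choosing the radius of $B$ so that $\partial B$ carries no perimeter of $\{u>t\}$ or of $F$ — $P(F_t;\Omega)\le P(\{u>t\};\Omega)-2c$. Let $v$ be the function whose superlevel sets are $\{u>t\}$ for $t\notin(t_0,t_0+\delta)$ and $F_t$ for $t\in(t_0,t_0+\delta)$, with a thin interpolation layer near $\partial B$ and near the endpoints $t_0$ and $t_0+\delta$ so that the family stays ordered by inclusion and no jump is created on $\partial B$. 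Then $v\in BV(\Omega)$, $v=u$ off a compact subset of $\Omega$, and the coarea formula gives
\[
\int_\Omega|Dv| \;=\; \int_{t\notin(t_0,t_0+\delta)}\!\! P(\{u>t\};\Omega)\,\dd t \;+\; \int_{t_0}^{t_0+\delta}\!\! P(F_t;\Omega)\,\dd t \;<\; \int_\Omega|Du|,
\]
the strict inequality coming from the gain $2c$ on a whole interval of levels. This contradicts that $u$ is of least gradient; hence $E_t$ is minimal, i.e. $\chi_{E_t}$ is of least gradient, for every $t\in\R$.

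The step I expect to be the real obstacle is the gluing in the third paragraph — the classical Bombieri–De Giorgi–Giusti ``cut and paste''. One must interpolate the modified superlevel sets in a thin shell near $\partial B$ so that they agree with the original ones there (the sets $\{u>t\}$ and $E_{t_0}$ need not match across $\partial B$, only in measure), interpolate near the endpoints to keep the whole family nested, choose the radius of $B$ to avoid perimeter on $\partial B$, and check that the assembled $v$ is genuinely in $BV(\Omega)$ with unchanged trace — all standard in principle but delicate. By contrast, the conceptual ingredients are light: the coarea formula, lower semicontinuity and submodularity of the perimeter, and the monotone $L^1_{\mathrm{loc}}$‑continuity of $t\mapsto\{u>t\}$ from above.
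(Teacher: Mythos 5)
The paper itself gives no proof of this statement --- it is quoted from Bombieri--De Giorgi--Giusti --- and the classical argument there (see also Sternberg--Williams--Ziemer) is quite different from yours: one first shows, using exactly the submodularity inequality you mention, that the rescaled truncations $v_\varepsilon:=\varepsilon^{-1}\bigl(\min\{\max\{u,t\},t+\varepsilon\}-t\bigr)$ are themselves least gradient functions, then observes $v_\varepsilon\to\chi_{\{u>t\}}$ in $L^1_{\mathrm{loc}}$ as $\varepsilon\downarrow 0$, and concludes via Miranda's stability theorem (an $L^1_{\mathrm{loc}}$ limit of least gradient functions is of least gradient). That detour through a compactness theorem exists precisely to avoid the gluing you attempt.

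The genuine gap is your third paragraph. For a function $v$ to have the prescribed superlevel sets, the whole family ($E_t=\{u>t\}$ for $t\notin(t_0,t_0+\delta)$, $F_t=(F\cap B)\cup(E_t\setminus B)$ for $t\in(t_0,t_0+\delta)$) must be nested, which forces $E_{t_0+\delta}\cap B\subseteq F\cap B\subseteq E_{t_0}\cap B$. A general competitor $F$ satisfies neither inclusion, and an interpolation layer ``near the endpoints'' cannot help: if, say, $F\cap B\not\subseteq E_{t_0}$, then nesting against the fixed set $E_{t_0}$ fails for \emph{every} $t$ in the interval, not just for $t$ close to $t_0$. Submodularity does let you assume $F\subseteq E_{t_0}$ or $F\supseteq E_{t_0}$, but each case repairs only one of the two required inclusions. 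The forced repair $G_t:=(F_t\cap E_{t_0})\cup E_{t_0+\delta}$ restores nesting and keeps $G_t\triangle E_t\subseteq \overline{B}$, but it destroys the perimeter gain: submodularity controls only the sum $P(A\cap E)+P(A\cup E)$, so intersecting with $E_{t_0}$ or unioning with $E_{t_0+\delta}$ may increase $P(G_t;B)$ above $P(E_t;B)$. Until you exhibit a nested family $G_t$ with $G_t\triangle E_t\subset\subset\Omega$ and $P(G_t;\Omega)\le P(E_t;\Omega)-c$ on a set of levels of positive measure, there is no admissible competitor and no contradiction. (The surrounding steps --- coarea, lower semicontinuity as $t\downarrow t_0$ using $E_t\uparrow E_{t_0}$, and choosing the radius of $B$ so that $\partial B$ carries no perimeter for a.e.\ level --- are correct.)
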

Obviously, Theorem \ref{twierdzeniezbgg} also holds for sets of the form $\{ u \geq t \}$. Let us introduce a convention in which we identify a set of finite perimeter with the set of its points of positive density. Under this convention, in dimension two (see, for instance, \cite[Chapter 10]{Giusti}) the boundary $\partial E$ of a minimal set $E$ is a locally finite union of line segments. In particular, if we take the precise representative of a least gradient function $u$, then $\partial \{ u \geq t \}$ is a locally finite union of line segments for every $t$. For this reason, we will in this paper always assume that $u$ is the precise representative of a least gradient function in order to be able to state any pointwise results.

\subsection{Traces of least gradient functions on annuli}

In \cite{Sternberg}, the authors have shown existence and uniqueness of solutions to the least gradient problem for continuous boundary data and strictly convex $\Omega$ (or, to be more precise, the authors assume that $\partial \Omega$ has non-negative mean curvature and is not locally area-minimizing; in dimension two, these conditions are equivalent to strict convexity). The proof of existence is constructive and its main idea is reversing Theorem \ref{twierdzeniezbgg} in order to construct almost all level sets of the solution. However, the authors provide counterexamples if the domain fails to be strictly convex.

In this subsection, we look at least gradient functions defined on annuli. We are particularly interested in their traces - as the domain is not strictly convex, not all continuous traces will be admissible. In particular, we will see why restriction of boundary data to the class $BV(\partial\Omega)$ in our analysis is reasonable. Moreover, the results presented in this subsection are of independent interest as interior regularity results for least gradient functions on strictly convex domains.


On an annulus $\Omega$, Theorem \ref{twierdzeniezbgg} gives us an important restriction on the shape of superlevel sets $E_t$. As connected components of $\partial E_t$ are line segments which lie entirely inside $\Omega$, each of these line segments which starts at a point of $\partial\Omega_-$ has to end at a point from $\partial\Omega_+$; however, the converse is not necessarily true. In view of Theorem \ref{thm:equivalence} (see below), for boundary data $g \in BV(\partial\Omega)$, we may think of each connected component of $\partial E_t$ as a transport ray in a corresponding transport problem. In this formulation, this observation means that there is no transport between points of $\partial\Omega_-$, but there may be transport between points of $\partial\Omega_+$. 



We start with proving that the total variation of $g$ restricted to $\partial\Omega_-$ is finite. Then, we will show that $g$ posseses some additional structure resulting from the topology of $\Omega$. Apart from their value as regularity results for least gradient functions on annuli, they serve as a justification for the choice of assumptions under which we prove existence of minimizers in Section \ref{Sec. 4}.

\begin{lemma}\label{lem:innerbv}
Let $\Omega \subset \mathbb{R}^2$ be an annulus. Suppose that $u \in BV(\Omega)$ is a least gradient function with trace $g \in L^\infty(\partial\Omega)$. Then $g_{|\partial\Omega_-} \in BV(\partial\Omega_-)$.
\end{lemma}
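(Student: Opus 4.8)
The plan is to argue by contradiction: suppose that $g_{-} = g_{|\partial\Omega_-}$ is not of bounded variation on the curve $\partial\Omega_-$. Since $g \in L^\infty(\partial\Omega)$, infinite total variation must come from infinitely many oscillations of $g_-$, and I want to convert these oscillations into an infinite number of superlevel sets whose boundaries $\partial E_t$ contain line segments forced by Theorem \ref{twierdzeniezbgg} to cross the annulus from $\partial\Omega_-$ to $\partial\Omega_+$. The geometric obstruction is that $\partial\Omega_-$ is strictly convex and $\Omega_- \subset\subset \Omega_+$, so there is a positive lower bound $\delta>0$ on the length of any chord of $\Omega$ joining a point of $\partial\Omega_-$ to a point of $\partial\Omega_+$; moreover, such segments emanating in distinct "sectors" of $\partial\Omega_-$ are more or less disjoint, so one cannot fit infinitely many of them with total length comparable to the finite quantity $\int_\Omega |Du|$.

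More concretely, I would first fix two levels $a < b$ in the essential range of $g_-$ such that, by the assumed unboundedness of $\Var(g_-)$, there are infinitely many disjoint arcs $I_1, I_2, \dots \subset \partial\Omega_-$ on which (in the trace sense) $g_-$ passes from below $a$ to above $b$ and back. For a generic level $t \in (a,b)$, the set $\{u \geq t\}$ then has, near each arc $I_k$, a boundary component entering $\Omega$; by the structure theorem cited after Theorem \ref{twierdzeniezbgg}, $\partial\{u \geq t\}$ is a locally finite union of segments, and each such segment starting on $\partial\Omega_-$ must terminate on $\partial\Omega_+$. The second step is a quantitative coarea/lower bound: $\int_\Omega |Du| = \int_{\mathbb R} \Per(\{u \geq t\}; \Omega)\, dt \geq \int_a^b \Per(\{u \geq t\};\Omega)\,dt$, and for each $t$ in a set of positive measure, $\Per(\{u\geq t\};\Omega) \geq$ (number of arcs "active" at level $t$) $\times\, \delta$. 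I would need to check that the trace characterization of superlevel sets (the trace of $\chi_{\{u\geq t\}}$ equals $\chi_{\{g \geq t\}}$ for a.e. $t$, which holds for least gradient functions since the superlevel sets are themselves minimal) lets me lower-bound the count of active arcs at level $t$ in terms of the oscillation data, and that summing over $k$ forces $\int_\Omega |Du| = +\infty$, contradicting $u \in BV(\Omega)$.

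The main obstacle I expect is the bookkeeping that turns "$g_-$ oscillates a lot" into "many disjoint cross-annulus segments occur simultaneously at a common level $t$", i.e. making precise the claim that a definite fraction of the oscillation arcs $I_k$ contribute a genuine segment of length $\geq \delta$ to $\partial\{u\geq t\}$ for $t$ ranging over a set of positive measure, and that these segments are disjoint (or have bounded overlap) so their lengths add. Strict convexity of $\partial\Omega_-$ is what I would use here: a chord of $\Omega$ landing on $I_k$ and crossing to $\partial\Omega_+$ stays in a cone-like region near $I_k$, and disjointness of the $I_k$ gives bounded overlap of these regions; combined with $\len(\partial\{u\geq t\}) < \infty$ for a.e. $t$, this yields the required contradiction. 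A cleaner alternative, if the direct counting is awkward, is to localize: pick one oscillation arc $I_k$, note it forces $\Per(\{u\geq t\};\Omega') \geq \delta$ on a level-interval of length $\geq (b-a)$ in a neighborhood $\Omega'$ of $I_k$ disjoint from the others, so each $I_k$ contributes at least $\delta(b-a)$ to $\int_\Omega |Du|$, and infinitely many such disjoint contributions is absurd.
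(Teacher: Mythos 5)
Your overall strategy is the same as the paper's: use Theorem \ref{twierdzeniezbgg} to turn oscillations of $g_-$ into line segments of $\partial\{u\geq t\}$ that must cross from $\partial\Omega_-$ to $\partial\Omega_+$ (no segment can join two points of $\partial\Omega_-$ since $\Omega_-$ is convex), each of length at least $\delta=\text{dist}(\partial\Omega_-,\partial\Omega_+)>0$, and derive a contradiction with a perimeter bound. However, your key extraction step contains a genuine gap. You claim that if $g_-\in L^\infty$ has infinite total variation, then there exist two fixed levels $a<b$ and infinitely many disjoint arcs $I_k\subset\partial\Omega_-$ on each of which $g_-$ passes from below $a$ to above $b$. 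This is false in general: take a function on an interval which oscillates $4^n$ times between $0$ and $2^{-n}$ on the $n$-th block. Its total variation is at least $\sum_n 4^n\,2^{-n}=\infty$, yet for any fixed $0<a<b$ only the finitely many blocks with $2^{-n}\geq b$ produce arcs crossing the whole band $[a,b]$, so the number of such arcs is finite for every choice of $a<b$. The same problem affects your ``cleaner alternative'' at the end, which again presupposes infinitely many arcs sharing a common band of height $b-a$.

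The correct quantitative statement --- and the one the paper uses --- comes from the one-dimensional coarea formula on $\partial\Omega_-$: writing $N(t)=P(\{g\geq t\},\partial\Omega_-)$ for the (integer) number of essential crossings of level $t$, one has $\int_{\mathbb R}N(t)\,dt=|Dg|(\partial\Omega_-)=+\infty$, and since $t$ ranges over the bounded interval $[-\|g\|_{\infty},\|g\|_{\infty}]$ this forces $N$ to be unbounded. The paper then picks a single level $t$ with $N(t)\geq M$, notes that $\{u\geq t\}$ is minimal and that each of the $N(t)$ trace-jump points emits a segment of length at least $\delta$, these segments being pairwise disjoint, so $P(\{u\geq t\},\Omega)\geq M\delta$; this contradicts the uniform bound on perimeters of minimal sets in $\Omega$ (see \cite[Lemma 2.17]{GornyGen}), which holds for every $t$ and makes a single bad level sufficient. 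Your level-integrated version can be repaired along the same lines without any arc extraction at all: for a.e.\ $t$ one has $P(\{u\geq t\},\Omega)\geq\delta N(t)$, so integrating in $t$ gives $\int_\Omega|Du|\geq\delta\int_{\mathbb R} N(t)\,dt=\delta\, TV(g_-)$, i.e.\ $TV(g_-)\leq\delta^{-1}\int_\Omega|Du|<\infty$ directly. Either repair closes the argument; as written, the proposal does not.
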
 

\begin{proof}
Let us denote by $P(E,U)$ the perimeter of a set $E$ with respect to an open set $U$. We begin by noticing that all minimal sets in $\Omega$ have perimeter less or equal to $P(\Omega, \mathbb{R}^2)$ (see, for instance, \cite[Lemma 2.17]{GornyGen}). By Theorem \ref{twierdzeniezbgg}, $\{ u \geq t \}$ is a minimal set for every $t$, i.e. its characteristic function is a function of least gradient; furthermore, for almost all $t \in \mathbb{R}$, the trace of $\chi_{\{u \geq t \}}$ equals $\chi_{\{ g \geq t \}}$. From now on, we consider only such $t$. 

As $\Omega$ is a convex subset of the plane, $\partial\Omega$ is homeomorphic to a circle; we consider the one-dimensional BV space on $\partial\Omega$. For the equivalence between the one-dimensional definitions of BV spaces on lines, see for instance \cite{EvansGariepy}; this equivalence extends to one-dimensional boundaries, see for instance \cite{Gorny}. By the co-area formula for $g_{|\partial\Omega_-}$, we have
$$ |D g|(\partial\Omega_-) = \int_{\mathbb{R}} P(\{ g  \geq t \}, \partial\Omega_-) dt.$$
Suppose that $g_{|\partial\Omega_-} \notin BV(\partial\Omega_-)$. Then the left hand side is infinite. Hence, the integrand on the right hand side is unbounded and, for any $M > 0$, we can find $t \in \mathbb{R}$ so that $P(\{ g \geq t \}, \partial\Omega_-) \geq M$. As $\partial\Omega_-$ is  one-dimensional, if $P(\{ g \geq t \}, \partial\Omega_-)$ is finite, it is a natural number (for the characterization of the BV space in one dimension, see for instance \cite[Chapter 5.10]{EvansGariepy}). Take a minimal set $E_t$ with trace $\chi_{\{ g \geq t \}}$; then, at each of the points from $\partial^* \{ g \geq t \}$, the reduced boundary of $\{ g \geq t \}$, there is a line segment from $\partial^* \{ u \geq t \}$ which ends at this point. However, no such line segment may connect two points from $\partial\Omega_-$; hence each of these line segments goes from $\partial\Omega_-$ to $\partial\Omega_+$. Then,
$$ P(\Omega, \mathbb{R}^2) \geq M \text{ dist}(\partial\Omega_-, \partial\Omega_+).$$
However, $M$ was arbitrary and $P(E_t, \Omega)$ is bounded, which yields to a contradiction. Hence $g_{|\partial\Omega_-} \in BV(\partial\Omega_-)$.  $\qedhere$
\end{proof}

However, the structure of \,$\Omega$\, imposes even stricter conditions on the structure of $g_{|\partial\Omega_-}$. The following results serve as motivations for admissibility conditions (H1)-(H4) in Section \ref{Sec. 4}; they do not enter the proof 
of equivalence between the least gradient problem and the optimal transport one
(see Section \ref{Sec. 3}) and so, we will use this equivalence to prove them.\\

First, as no line segment $l \subset \partial E_t$ may have both ends on $\partial\Omega_-$, the total variation of $g$ on $\partial\Omega_-$ is smaller than the total variation of $g$ on $\partial\Omega_+$.

\begin{lemma}\label{lem:tvinequality}
Let $\Omega \subset \mathbb{R}^2$ be an annulus. Suppose that $u \in BV(\Omega)$ is a least gradient function with trace $g \in BV(\partial\Omega)$. Then $TV(g_-) \leq TV(g_+)$.
\end{lemma}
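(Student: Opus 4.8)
The plan is to use the same geometric input as in Lemma~\ref{lem:innerbv}: by Theorem~\ref{twierdzeniezbgg}, for almost every $t \in \mathbb{R}$ the superlevel set $E_t = \{u \geq t\}$ is a minimal set whose trace on $\partial\Omega$ equals $\chi_{\{g \geq t\}}$, and $\partial E_t$ is a locally finite union of line segments contained in $\overline\Omega$, none of which may join two points of $\partial\Omega_-$. The strategy is then to express both $TV(g_-)$ and $TV(g_+)$ via the coarea formula and to compare the integrands pointwise in $t$. Concretely, for a.e. $t$ we have $P(\{g \geq t\}, \partial\Omega_-) = \#(\partial^*\{g \geq t\} \cap \partial\Omega_-)$ and similarly on $\partial\Omega_+$, since in one dimension the perimeter counts the number of boundary points of the superlevel set; then
\begin{equation*}
TV(g_-) = \int_{\mathbb{R}} \#\big(\partial^*\{g \geq t\} \cap \partial\Omega_-\big)\, dt, \qquad TV(g_+) = \int_{\mathbb{R}} \#\big(\partial^*\{g \geq t\} \cap \partial\Omega_+\big)\, dt.
\end{equation*}
So it suffices to show that for a.e. $t$ one has $\#(\partial^*\{g \geq t\} \cap \partial\Omega_-) \leq \#(\partial^*\{g \geq t\} \cap \partial\Omega_+)$.

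First I would argue the pointwise count inequality. Fix such a $t$ and let $E_t$ be the corresponding minimal set. At each point $p \in \partial^*\{g \geq t\} \cap \partial\Omega_-$, the jump of the trace of $\chi_{E_t}$ forces a connected component of $\partial E_t$ — a line segment $\ell_p$ — to emanate from $p$; since no such segment may have both endpoints on $\partial\Omega_-$, and since every component of $\partial E_t$ has its endpoints on $\partial\Omega$ (a segment cannot terminate in the interior of $\Omega$, as $\partial E_t$ has no endpoints inside $\Omega$), the other endpoint $q_p$ of $\ell_p$ lies on $\partial\Omega_+$. Moreover $q_p \in \partial^*\{g\geq t\} \cap \partial\Omega_+$, because the trace of $\chi_{E_t}$ jumps across $q_p$ as well. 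The assignment $p \mapsto q_p$ is injective: two distinct segments of the (locally finite) family $\partial E_t$ cannot share an endpoint, as $\partial E_t$ near an interior-of-$\Omega$-reaching endpoint is a single segment, and near $\partial\Omega$ the trace condition pins down the germ of $E_t$. This injection from $\partial^*\{g \geq t\} \cap \partial\Omega_-$ into $\partial^*\{g \geq t\} \cap \partial\Omega_+$ gives the desired inequality of cardinalities; integrating in $t$ yields $TV(g_-) \leq TV(g_+)$.

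I expect the main obstacle to be the claim that the segments issuing from distinct points of $\partial^*\{g\geq t\}\cap\partial\Omega_-$ reach \emph{distinct} points of $\partial\Omega_+$, i.e. the injectivity of $p \mapsto q_p$. One must rule out that several segments of $\partial E_t$ converge to the same boundary point, and also handle the degenerate situation where a ``segment'' could be tangent to $\partial\Omega_+$ or where $\partial E_t$ accumulates (excluded in the interior by local finiteness, but one should check behaviour up to the boundary). A clean way around this is to work with $E_t$ directly as a set of finite perimeter: each point of $\partial^*\{g\geq t\}\cap\partial\Omega_-$ is an endpoint of exactly one segment of $\partial E_t\cap\Omega$ (by the structure theorem for minimal sets in the plane and the matching of traces), and distinct endpoints on $\partial\Omega_-$ cannot be endpoints of the same segment since a segment has only two endpoints and they cannot both lie on $\partial\Omega_-$ — so if two such segments coincided they would share both endpoints, forcing one endpoint on $\partial\Omega_-$, a contradiction; hence the segments are distinct and, being chords of the convex region $\Omega_+$ that do not cross (as $\partial E_t$ is the boundary of a set, its segments are pairwise non-crossing), they have distinct endpoints on $\partial\Omega_+$. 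Alternatively, one may invoke the equivalence with optimal transport established in Section~\ref{Sec. 3} and interpret the segments as transport rays: distinct transport rays are disjoint, and each ray starting on $\partial\Omega_-$ ends on $\partial\Omega_+$, which gives the injection immediately and makes the inequality transparent.
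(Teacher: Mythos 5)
Your strategy---the one-dimensional coarea formula on $\partial\Omega_\pm$ combined with a level-by-level injection of jump points along the segments of $\partial E_t$---is genuinely different from the paper's argument. The paper works with the measure $f=\partial_\tau g$ directly: since no segment of $\partial E_t$ (no transport ray) may join two points of $\partial\Omega_-$, the whole mass of $f_-^+$ must be transported into $f_+^-$ and the whole of $f_-^-$ must be received from $f_+^+$, whence $|f_-^+|\le|f_+^-|$ and $|f_-^-|\le|f_+^+|$; summing gives $TV(g_-)\le TV(g_+)$. This is a comparison of total masses, not a count of points, and that difference matters at exactly the step you yourself flag as the main obstacle.

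That step is a genuine gap. Both of your proposed justifications for the injectivity of $p\mapsto q_p$ fail as stated: two non-crossing chords of a convex region can perfectly well share an endpoint (non-crossing only forbids their interiors from meeting), and distinct transport rays are \emph{not} disjoint in general---they may share endpoints, which is precisely why Proposition \ref{Prop. 4.2} must handle the set $D$ of points lying on several transport rays, and the example following Proposition \ref{stw:specialconfiguration} exhibits two rays meeting at a point of $\partial\Omega_-$. Moreover, if two segments $\ell_{p_1},\ell_{p_2}\subset\partial E_t$ did end at the same $q\in\partial\Omega_+$, then near $q$ the set $E_t$ would be a wedge between them (or the complement of one), the trace of $\chi_{E_t}$ would take the \emph{same} value on both sides of $q$, and $q$ would not belong to $\partial^*\{g\ge t\}\cap\partial\Omega_+$ at all---so your pointwise inequality would lose two inner jump points against zero outer ones. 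Excluding this configuration needs an actual argument (for instance, comparing the wedge with a thin collar around the arc of $\partial\Omega_-$ between $p_1$ and $p_2$, using that a least gradient function minimizes among competitors with the same trace, and that the shorter boundary arc of a convex obstacle is the geodesic between two of its boundary points in the exterior); it does not follow from local finiteness or from non-crossing. The paper's mass-balance formulation is immune to the issue: if several rays end at the same $q$, their masses simply accumulate in an atom of $f_+^-$ at $q$ and the inequality of total variations survives. If you want to keep your counting scheme, the cleanest repair is to replace cardinalities by the measures $f_\pm^\pm$ and run the same "no ray joins $\partial\Omega_-$ to itself" observation at the level of mass, which is exactly what the paper does.
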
 

\begin{proof}
Set $f=\partial_\tau g$. We will use Proposition \ref{prop:admissibility}, which was proved as a step in the proof of \cite[Theorem 2.1]{Gorny0}. It states that a rotation of the gradient of a $BV$ function is an admissible vector field in \eqref{BeckmannIntro}, i.e. the Beckmann problem. In particular, boundaries of superlevel sets correspond to transport rays.

Divide the derivative $f$ into four parts: $f_-^+$, $f_-^-$, $f_+^+$ and $f_+^-$. As $u$ is a least gradient function, there is no boundary of a superlevel set which connects two points from $\partial\Omega_-$. Hence, there can be no transport from $\partial\Omega_-$ to $\partial\Omega_-$, so $f_-^+$ is transported to $f_+^-$
; similarly, a part of \,$f_+^+$\, is transported to $f_-^-$.   
Summing up these inequalities, we obtain $TV(g_-) \leq  TV(g_+)$. $\qedhere$
\end{proof}

Moreover, we have the following:
\begin{proposition}\label{stw:specialconfiguration}
Let \,$\Omega \subset \mathbb{R}^2$ be an annulus. Suppose that $u \in BV(\Omega)$ is a least gradient function with trace \,$g \in BV(\partial\Omega)$ and set \,$f=\partial_\tau g$. Then, we have \,$\spt(f_-^+) \cap \spt(f_-^-) = \{ p_1, ..., p_k \}$. Moreover, for each \,$i = 1, ..., k$, the point $p_i$ lies on a line segment \,$l_i \subset \partial E_t$ with both ends on $\partial\Omega_+$.
\end{proposition}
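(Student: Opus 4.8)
The plan is to exploit the equivalence between the least gradient problem and the Beckmann/optimal transport problems (Proposition~\ref{prop:admissibility}), together with the geometric constraint from Theorem~\ref{twierdzeniezbgg} that every line segment in $\partial E_t$ which touches $\partial\Omega_-$ must connect it to $\partial\Omega_+$. The claim is essentially a refinement of Lemma~\ref{lem:tvinequality}: not only is the mass of $f_-^+$ transported entirely to $\partial\Omega_+$ (and a part of $f_+^+$ to $f_-^-$), but $f_-^+$ and $f_-^-$ cannot overlap on a set of positive measure, and where they do meet, they meet at finitely many points, each lying on a ray with both ends on $\partial\Omega_+$.

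First I would recall that $f = \partial_\tau g$ is a measure on $\partial\Omega$ and that $f_- = \partial_\tau g_-$ is a finite measure on $\partial\Omega_-$ (by Lemma~\ref{lem:innerbv}); write $f_- = f_-^+ - f_-^-$ as its Jordan decomposition. Since $\partial\Omega_-$ is a strictly convex curve, $g_- \in BV(\partial\Omega_-)$, so $f_-^\pm$ are mutually singular and $\spt(f_-^+) \cap \spt(f_-^-)$ is a closed set; the content of the statement is that this set is \emph{finite}. To see finiteness, I would argue by contradiction: if the intersection were infinite, it would contain an accumulation point $p$. At $p$, by the structure of $BV$ functions in one dimension, $g_-$ has both arbitrarily close points where it is increasing and where it is decreasing (with total positive variation on each side). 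Now invoke the transport picture: each atom or piece of $f_-^+$ near $p$ is the endpoint of a transport ray going to $\partial\Omega_+$, and likewise each piece of $f_-^-$ near $p$ receives a ray coming from $\partial\Omega_+$. Rays emanating from a small arc of $\partial\Omega_-$ around $p$ must be disjoint (transport rays in an optimal plan for the Euclidean cost cannot cross in the interior), so they sweep out a region of positive area; but having infinitely many such rays, alternating in direction and all of length at least $\mathrm{dist}(\partial\Omega_-,\partial\Omega_+)$, forces an unbounded contribution to $P(E_t,\Omega)$ for suitable $t$, contradicting the uniform bound $P(E_t,\Omega) \le P(\Omega,\mathbb{R}^2)$ used in Lemma~\ref{lem:innerbv}. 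This gives $\spt(f_-^+) \cap \spt(f_-^-) = \{p_1,\dots,p_k\}$.

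For the second assertion, fix $p_i$ in the intersection. Since $p_i \in \spt(f_-^+)$, there is a transport ray $l_i$ ending at $p_i$; I must show its other end lies on $\partial\Omega_+$ and, more importantly, that $l_i$ actually has \emph{both} endpoints on $\partial\Omega_+$, i.e. $l_i$ does not terminate at $p_i \in \partial\Omega_-$. The point is that $p_i$ is also in $\spt(f_-^-)$: arbitrarily close to $p_i$ on $\partial\Omega_-$ there is mass of $f_-^-$, which is the target of rays arriving from $\partial\Omega_+$. If $l_i$ ended at $p_i$, these arriving rays (ending near $p_i$) and the ray $l_i$ (also ending at $p_i$ and going outward to $\partial\Omega_+$) would have to cross, since they emanate into the interior from a tiny neighbourhood of the same boundary point in two "opposite" directions along the transport — contradicting non-crossing of transport rays (equivalently, the monotonicity structure of boundaries of superlevel sets forced by Theorem~\ref{twierdzeniezbgg}). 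Hence $l_i$ passes \emph{through} $p_i$ rather than terminating there, and as a line segment in $\partial E_t$ it has both ends on $\partial\Omega$; since it already contains the interior point $p_i \in \partial\Omega_-$ and cannot have an endpoint on $\partial\Omega_-$, both endpoints are on $\partial\Omega_+$.

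The main obstacle I expect is making the "rays cannot cross / alternate infinitely often" argument rigorous in the present generality. Unlike in the strictly convex case treated in~\cite{Dweik}, here the transport plan lives on $\bar\Omega$ with a non-convex $\Omega$, so one must be careful that the optimal transport rays are genuine geodesics staying in $\bar\Omega$ and that the non-crossing property (cyclical monotonicity of the support of $\gamma$ plus the triangle inequality being strict for non-collinear triples) still applies to rays with endpoints on $\partial\Omega$. I would handle this by appealing to the equivalence between \eqref{BeckmannIntro} and \eqref{KantoIntro} on the annulus, which is established in Section~\ref{Sec. 3} (and which I may assume), so that transport rays are line segments and two of them sharing an interior point must be collinear; combined with the quantitative perimeter bound this closes both parts of the argument.
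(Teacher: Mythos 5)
Your overall strategy (pass to the transport picture, use non-crossing of rays) is the right one, but both of the arguments you actually offer have genuine gaps, and in each case the missing ingredient is the same: the quantitative cyclical monotonicity inequality that the paper's proof is built on.

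For finiteness, you argue that infinitely many points of $\spt(f_-^+)\cap\spt(f_-^-)$ accumulating at $p$ would produce infinitely many rays of length at least $\operatorname{dist}(\partial\Omega_-,\partial\Omega_+)$ and hence an ``unbounded contribution to $P(E_t,\Omega)$ for suitable $t$.'' This does not work: the rays attached to different points of the intersection lie on $\partial E_t$ for \emph{different} values of $t$, and the bound $P(E_t,\Omega)\le P(\Omega,\mathbb{R}^2)$ is a per-level-set bound. A boundary datum $g_-$ with infinitely many strict local extrema at different heights accumulating at $p$ has every level crossed only finitely often, so no single $E_t$ sees unbounded perimeter, yet every such extremum belongs to $\spt(f_-^+)\cap\spt(f_-^-)$. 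The paper avoids this by proving the second assertion \emph{first}: each $p$ in the intersection is shown to lie on a chord with both endpoints on $\partial\Omega_+$, and finiteness then follows because these chords cannot intersect at interior points and each cuts off a region of $\Omega$ of area bounded below.

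For the chord structure itself, your claim that a ray $l_i$ terminating at $p_i$ would have to cross the rays arriving at nearby points of $\spt(f_-^-)$ is not justified: a ray $[p_i,q]$ and a ray $[q',p']$ with $p'$ close to $p_i$ can both go outward to $\partial\Omega_+$ without crossing, forming a ``V'' at $p_i$; non-crossing alone does not exclude this configuration. What excludes it is the four-point cyclical monotonicity inequality
$$|p_n^+-q_n^-|+|p_n^--q_n^+|\le |p_n^+-p_n^-|+|q_n^+-q_n^-|,$$
applied to a ray of each type with $p_n^\pm\to p$ and passed to the limit along a subsequence with $q_n^\pm\to q^\pm$, which yields $|p-q^+|+|p-q^-|\le |q^+-q^-|$ and hence collinearity of $q^+,p,q^-$ by the triangle inequality. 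You cite cyclical monotonicity in your closing paragraph, but only in the form ``two rays sharing an interior point are collinear,'' which does not apply to the common boundary endpoint $p$; the actual computation above is the step your proposal is missing, and without it neither part of the statement is established.
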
 

\begin{proof}
Suppose that $p \in \spt(f_-^+) \cap \spt(f_-^-)$. For every $n$, consider the sets $V_n = \partial\Omega_- \cap B(p, \frac{1}{n})$. Inside any $V_n$, pick two points $p_n^+,\,p_n^-$ such that $p_n^+ \in \spt(f_-^+)$ and $p_n^- \in \spt(f_-^-)$. So, there are two corresponding points $q_n^\pm \in \partial\Omega_+$ such that $\left[q_n^+,p_n^-\right]$ and $\left[p_n^+,q_n^-\right]$ are two transport rays.
From the cyclical monotonicity property of the optimal transport plan for \eqref{KantoIntro} (see, for instance, \cite[Chapter 1]{San2} or Lemma \ref{transport rays inside the annulus}), we have
$$ |p_n^+ - q_n^-| + |p_n^- - q_n^+| \leq |p_n^+ - p_n^-| + |q_n^+ - q_n^-|.$$
Yet, up to a subsequence, we have $q_n^\pm \rightarrow q^\pm$, where $q^\pm \in \partial\Omega_+$. Then, passing to the limit when $n \rightarrow \infty$, we get
$$ |p - q^+| + |p -q^-| \leq |q^+- q^-|,$$
which implies that $q^+,\,p$ and $q^-$ are collinear.
But, this is possible only for finitely many points $p \in \partial\Omega_-$, thanks to the fact that the transport rays cannot intersect at an interior point.  
\end{proof}

In particular, if \,$\spt(f_-^+) \cap \spt(f_-^-) \neq \emptyset$, this requires a very special configuration of the boundary values - if $g \in C(\partial\Omega)$, then necessarily \,$p,\,q^\pm \in g^{-1}(t)$\, and the line segment $\overline{q^+ q^-}$ lies on a supporting line to $\partial\Omega_-$ at $p$; see the following example:

\begin{example}
Let $\Omega = B(0,2) \backslash \overline{B(0,1)}$. Take the boundary data equal to $g_-(x,y) = y$, for every $(x,y) \in \partial B(0,1)$, and 
$$g_+(x,y) = \threepartdef{-1}{\,\,y < -1,}{y}{\,\,y \in [-1,1],}{1}{\,\,y > 1.} $$\\
Then, it is easy to see that the solution to the BV least gradient problem exists and equals  
$$u (x,y) = \threepartdef{-1}{\,\,y < -1,}{y}{\,\,y \in [-1,1],}{1}{\,\,y > 1.} $$\\
Here, we see that \,$p \in \spt(f_-^+) \cap \spt(f_-^-) = \{ (0, \pm 1) \}$, $q^+=(-\sqrt{3},\pm1)$ and \,$q^-=(\sqrt{3},\pm1)$.
\end{example}
 
\begin{proposition}\label{stw:finitelymanychanges}
Let \,$\Omega \subset \mathbb{R}^2$ be an annulus. Suppose that $u \in BV(\Omega)$ is a least gradient function with trace $g \in BV(\partial\Omega)$. Then, $g_- \in BV(\partial\Omega_-)$ changes monotonicity finitely many times.
\end{proposition}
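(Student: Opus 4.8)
The plan is to argue by contradiction, showing that if $g_-$ changes monotonicity infinitely many times, then we can extract infinitely many local maxima and local minima on $\partial\Omega_-$, each of which forces a transport ray running all the way to $\partial\Omega_+$; using that these rays cannot cross in the interior and that $\partial\Omega_-$ is separated from $\partial\Omega_+$ by a positive distance, this will contradict the uniform bound on the perimeter of superlevel sets of $u$ (equivalently, the boundedness of the transport cost / total variation of $g$).

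First I would make precise what "changes monotonicity finitely many times" means: parametrize $\partial\Omega_-$ by a variable $\theta \in \R / L\Z$ and say $g_-$ changes monotonicity $N$ times if there is a partition of the circle into $N$ arcs on which $g_-$ is alternately nondecreasing and nonincreasing. Note that by Lemma~\ref{lem:innerbv} we already know $g_- \in BV(\partial\Omega_-)$, so $f_- = \partial_\tau g_-$ is a finite signed measure; the number of monotonicity changes is, roughly, the number of "sign alternations" of $f_-$. The key observation is that between a local maximum and an adjacent local minimum of $g_-$ there is an arc carrying a nontrivial chunk of $f_-^-$ (on the decreasing side) separated from a nontrivial chunk of $f_-^+$; by Lemma~\ref{lem:tvinequality} and the structure of transport rays (Proposition~\ref{prop:admissibility} together with the fact, recalled in Section~2, that no ray connects two points of $\partial\Omega_-$), each such chunk of mass on $\partial\Omega_-$ must be transported to $\partial\Omega_+$, hence is the endpoint of boundaries of superlevel sets $\partial E_t$ reaching $\partial\Omega_+$.

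Then I would quantify: suppose $g_-$ changes monotonicity at least $N$ times. Then there are $N$ disjoint arcs $I_1,\dots,I_N \subset \partial\Omega_-$ on each of which $g_-$ is genuinely monotone (strictly, after removing flat pieces) with oscillation $\mathrm{osc}_{I_j} g_- = c_j > 0$. For $\Leb$-a.e.\ level $t$ strictly between $\min_{I_j} g_-$ and $\max_{I_j} g_-$, the set $\partial\{u \geq t\}$ contains a line segment with one endpoint in $I_j$ and (since it cannot land back on $\partial\Omega_-$) the other endpoint on $\partial\Omega_+$; such a segment has length at least $\mathrm{dist}(\partial\Omega_-,\partial\Omega_+) =: \delta > 0$. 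Integrating the coarea formula $\int_\Omega |Du| = \int_\R P(\{u \geq t\},\Omega)\, dt \geq \int_\R \haus^1(\partial^*\{u \geq t\} \cap \Omega)\, dt$ and observing that for each $j$ the contribution from levels in $(\min_{I_j} g_-, \max_{I_j} g_-)$ is at least $\delta \, c_j$, and that these level ranges are associated to disjoint portions of $\partial\{u\ge t\}$... — actually the cleaner bookkeeping is on the transport side: the Beckmann energy $\int |v|$, equivalently the transport cost in \eqref{KantoIntro}, is at least $\delta$ times the total mass of $f_-$ transported to $\partial\Omega_+$, and the $N$ monotone arcs contribute at least $\delta \sum_{j} c_j$ worth of mass that must cross the gap. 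Letting $N \to \infty$, since $TV(g_-) \ge \sum_j c_j$ would need to stay finite while the geometric argument shows $\sum_j c_j$ is essentially bounded below by a constant times $N$ — no wait, that's not right either; rather, the point is that the rays emanating from the $N$ arcs are pairwise non-crossing segments of length $\ge \delta$ inside the bounded region $\Omega$, and a bounded planar region cannot contain arbitrarily many pairwise non-crossing (disjoint, by Lemma~\ref{transport rays inside the annulus}) segments each of length $\ge \delta$ that all start on the fixed curve $\partial\Omega_-$ and all belong to boundaries of minimal sets whose total perimeter is bounded by $P(\Omega,\R^2)$; this is the contradiction.

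The main obstacle I anticipate is making the last step fully rigorous: one must carefully pass from "infinitely many monotonicity changes" to "infinitely many transport rays with a uniform lower bound on their contribution to a bounded quantity." The subtlety is that different monotone arcs could, a priori, be associated with overlapping ranges of levels $t$, so one cannot naively sum $\delta c_j$ as contributions to $\int_\R P(\{u\ge t\},\Omega)\,dt$ without double-counting, nor can one directly sum them as transport mass without checking the masses are genuinely distinct. I would handle this by fixing a single well-chosen level $t_0$ (or a small interval of levels) such that $\partial E_{t_0}$ simultaneously "sees" many of the monotone arcs — since at a local max/min of $g_-$ the value $t_0 = g_-(p)$ is achieved and the corresponding ray is pinned there — and then use that $\partial E_{t_0}$ alone, being a single minimal set with $P(E_{t_0},\R^2) \le P(\Omega,\R^2)$, cannot contain more than $P(\Omega,\R^2)/\delta$ such disjoint segments; alternatively, arguing as in the proof of Lemma~\ref{lem:innerbv}, one finds for any $M$ a level $t$ with $P(\{g_- \ge t\}, \partial\Omega_-) \ge M$ forces $P(\Omega,\R^2) \ge M\,\mathrm{dist}(\partial\Omega_-,\partial\Omega_+)$, and infinitely many monotonicity changes does produce such $t$ — but one must rule out that the oscillations $c_j$ shrink fast enough to keep $TV(g_-)$ finite while still making $P(\{g_- \ge t\},\partial\Omega_-)$ large for some $t$; this forces the argument to be about the \emph{number} of sign changes at a fixed level rather than total variation, which is exactly the content of the co-area/perimeter computation already used in Lemma~\ref{lem:innerbv}, now applied to show that $P(\{g_-\ge t\},\partial\Omega_-)$ is uniformly bounded, hence $g_-$ has finitely many monotonicity changes.
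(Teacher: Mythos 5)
Your proposal correctly identifies the danger---oscillations whose amplitudes $c_j$ shrink and whose value ranges are pairwise disjoint---but none of the counting schemes you offer actually survives it, so the proof has a genuine gap at its central step. First, the claim that a bounded planar region cannot contain arbitrarily many pairwise disjoint segments of length at least $\delta$ emanating from $\partial\Omega_-$ is false: the annulus itself contains infinitely many disjoint radial segments of length $\mathrm{dist}(\partial\Omega_-,\partial\Omega_+)$. The perimeter bound $P(E_t,\Omega)\le P(\Omega,\mathbb{R}^2)$ only controls the rays belonging to a \emph{single} level $t$, and distinct monotonicity changes generically occur at distinct values of $g_-$, so no single well-chosen level ``sees'' more than a couple of them; summing over levels reintroduces exactly the double counting you flagged. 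Second, your fallback---that a uniform bound on $P(\{g_-\ge t\},\partial\Omega_-)$ over $t$ would imply finitely many monotonicity changes---is also false: a $BV$ function on a circle can have infinitely many local extrema arranged so that the oscillation ranges are pairwise disjoint, in which case every level is crossed at most four times while the monotonicity still changes infinitely often. So the argument of Lemma \ref{lem:innerbv} cannot be upgraded to yield the statement.

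The paper closes the gap with two ingredients you do not use. It splits monotonicity changes into (i) points of $\spt(f_-^+)\cap\spt(f_-^-)$, of which there are finitely many by Proposition \ref{stw:specialconfiguration} (cyclical monotonicity forces $q^+$, $p$, $q^-$ to be collinear, and non-crossing of transport rays permits only finitely many such configurations), and (ii) flat parts of $g_-$ separating $\spt(f_-^+)$ from $\spt(f_-^-)$. For a flat part of length $<\varepsilon$ with endpoints $p^\pm$, the same quadrilateral inequality gives $|p^+-q^-|+|p^--q^+|\le \varepsilon+|q^+-q^-|$, hence $|q^+-q^-|\ge 2\,\mathrm{dist}(\partial\Omega_-,\partial\Omega_+)-\varepsilon$: the feet of the two rays on the \emph{outer} boundary are uniformly far apart, with a lower bound independent of the oscillation amplitude. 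Since the arcs of $\partial\Omega_+$ joining $q_k^+$ to $q_k^-$ for different flat parts are pairwise disjoint, infinitely many flat parts would force $\mathcal{H}^1(\partial\Omega_+)=\infty$. The counting thus takes place on $\partial\Omega_+$, where the total length is finite and each monotonicity change costs a fixed amount of length---precisely the uniform lower bound your approach lacks.
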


\begin{proof}
Set $f=\partial_\tau g$. There are two possibilities so that $g_-$ changes monotonicity: either at a point $p \in \spt(f_-^+) \cap \spt(f_-^-)$ or there is a flat part where $g_-$ is constant between $\spt(f_-^+)$ and $\spt(f_-^-)$. By Proposition \ref{stw:specialconfiguration}, the first variant can happen only finitely many times. We will argue by contradiction and assume that there are countably many flat parts $F_k^-$ of $g_-$. 

Fix any $\varepsilon > 0$. As there are countably many flat parts of $g_-$ and $\mathcal{H}^1(\partial\Omega_-)$ is finite, countably many of them have length smaller than $\varepsilon$. Now, take a flat part $F^-$ such that $\mathcal{H}^1(F^-) < \varepsilon$ and $\partial F^- = \{ p^+, p^- \}$, where $p^\pm \in \spt(f_-^\pm)$. Now, we make a similar argument as in the proof of Proposition \ref{stw:specialconfiguration}: consider the sets $V_n^\pm = \partial\Omega_- \cap B(p^\pm, \frac{1}{n})$. Then, inside any $V_n^\pm$, there is a point $p_n^\pm$ such that there is a transport ray coming out of $p_n^\pm$ to a point $q_n^\mp$ in $\partial\Omega_+$. 
Yet, we have
$$ |p_n^+ - q_n^-| + |p_n^- - q_n^+| \leq |p_n^+ - p_n^-| + |q_n^+ - q_n^-|.$$
Now, passing to the limit when $n \rightarrow \infty$, we have $p_n^\pm \rightarrow p^\pm$ and $q_n^\pm \rightarrow q^\pm$ where $q^\pm \in \partial\Omega_+$, and then  
$$ |p^+ - q^-| + |p^- - q^+| \leq |p^+ -p^-| + |q^+ -q^-| \leq \varepsilon + |q^+ - q^-|.$$
Yet, we have \,$\delta:=\mbox{dist}(\partial\Omega_+,\partial\Omega_-) >0$. Then, this means that there are two sequences $(q_k^\pm)_k \subset \partial\Omega_+$ such that the curves that connect $q_k^+$ to $q_k^-$ on $\partial\Omega_+$ are disjoint (thanks to the fact that the transport rays cannot intersect) and
$$\delta  \leq |q_k^+ -q_k^-|,$$
which is a contradiction since $\mathcal{H}^1(\partial\Omega_+) < +\infty$.
Finally, this means that there are only finitely many flat parts of $g_-$, so $g_-$ changes monotonicity only finitely many times. $\qedhere$
\end{proof}

The following Lemma, which follows from the strict convexity of $\partial\Omega_+$, will play a part in the proof to come (it is proved using a blow-up of $\partial\Omega_+$).

\begin{lemma}\label{lem:zawieranie}
(\cite[Lemma 3.8]{Gorny0}) Let $u \in BV(\Omega)$ be a least gradient function with trace $g$. Then, we have $\partial \{ u \geq t \} \cap \partial\Omega_+ \subset g_+^{-1}(t).$
\end{lemma}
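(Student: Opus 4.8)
The plan is to prove that $\partial\{u \geq t\} \cap \partial\Omega_+ \subseteq g_+^{-1}(t)$ by a blow-up argument near a boundary point, exploiting the strict convexity of $\partial\Omega_+$. Since we are allowed to invoke \cite[Lemma 3.8]{Gorny0}, the task is essentially to reconstruct (or at least sketch faithfully) that argument in the annulus setting, noting that the relevant part of $\partial\Omega$ is the strictly convex outer component $\partial\Omega_+$, so the original proof for strictly convex domains applies verbatim on a neighborhood of each point of $\partial\Omega_+$.

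First I would fix $t \in \mathbb{R}$ such that the trace of $\chi_{\{u \geq t\}}$ equals $\chi_{\{g \geq t\}}$ (which holds for a.e. $t$; since both sides of the inclusion are compatible with taking precise representatives and the statement is about the precise representative of a minimal set, one then passes to general $t$ by monotonicity in $t$ of the superlevel sets). Let $x_0 \in \partial\{u \geq t\} \cap \partial\Omega_+$. By Theorem \ref{twierdzeniezbgg}, $E_t := \{u \geq t\}$ is a minimal set, so in a neighborhood of $x_0$ inside $\Omega$ the boundary $\partial E_t$ is a finite union of line segments; pick a segment $\ell \subset \partial E_t$ with one endpoint at $x_0$ (if $x_0$ is isolated in $\partial E_t \cap \partial\Omega_+$ we still have such a segment entering $\Omega$, since $E_t$ has positive density considerations forcing $\partial E_t$ to reach $x_0$ through the interior). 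The key geometric input is that $\partial\Omega_+$ is strictly convex, hence near $x_0$ the domain $\Omega$ lies (locally) on one side of the supporting line to $\partial\Omega_+$ at $x_0$, and $\partial\Omega_+$ curves strictly away from that line.

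The heart of the argument is a blow-up at $x_0$: rescale $\Omega$ and $E_t$ by $\frac{1}{r}$ around $x_0$ and let $r \to 0$. The rescaled domains converge to a half-plane $H$ (with $\partial H$ the supporting line), and the rescaled sets $E_t^{(r)}$ converge, up to subsequence, to a minimal set in $H$ whose boundary is again a union of lines; because minimal sets in a half-plane whose boundary touches $\partial H$ must have boundary components that are line segments meeting $\partial H$, and strict convexity of $\partial\Omega_+$ precludes a second nearby contact point, the trace of the limit on $\partial H$ is, locally, a half-line $\{s \geq 0\}$ or $\{s \leq 0\}$ starting exactly at the image of $x_0$. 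Translating back: in every one-sided neighborhood of $x_0$ along $\partial\Omega_+$, the trace $\chi_{\{g_+ \geq t\}}$ takes both values $0$ and $1$ arbitrarily close to $x_0$, which forces $x_0$ to be a jump point of the precise representative of $\chi_{\{g_+ \geq t\}}$, i.e. $x_0 \in \partial^*\{g_+ \geq t\}$; intersecting over $t$ (or rather, using that $x_0 \in \partial E_t$ means $x_0$ is a jump point of $\chi_{E_t}$ for the relevant $t$, and the trace of this jump is a jump of $\chi_{\{g_+ \geq t\}}$) yields $g_+(x_0) = t$ in the sense of the precise representative.

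I expect the main obstacle to be making the blow-up rigorous — specifically, ensuring that the limit minimal set in the half-plane is nontrivial near the blow-up point (i.e. that $\partial E_t$ does not "escape" in the limit) and that the convergence is strong enough (in $L^1_{loc}$, with convergence of reduced boundaries) to transfer the trace information. This is exactly the content of \cite[Lemma 3.8]{Gorny0}, and since only a neighborhood of $x_0 \in \partial\Omega_+$ is involved and $\partial\Omega_+$ is strictly convex there, the proof carries over without modification; so in the paper I would simply cite it, but the blow-up scheme above is the argument one would reproduce if needed.
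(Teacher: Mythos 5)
Your proposal matches the paper, which gives no proof of its own for this lemma: it simply cites \cite[Lemma 3.8]{Gorny0} and remarks that the result follows from the strict convexity of $\partial\Omega_+$ via a blow-up of the boundary, exactly the strategy (and the justification for why the strictly convex case carries over locally to the outer component) that you describe. Your reconstruction of the blow-up argument is a faithful sketch of the cited proof, so the approach is essentially the same.
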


The final issue concerns the images of the inner and outer boundary part under the boundary data $g$. This is important in view of the equivalence proved in Theorem \ref{thm:equivalence}; under the structural hypotheses (H1)-(H4) introduced in Section \ref{Sec. 4}, it enables us to find precisely the boundary data for which we have found a solution to the least gradient problem. 

\begin{lemma}\label{lem:images}
Let $\Omega \subset \mathbb{R}^2$ be an annulus. Suppose that $u \in BV(\Omega)$ is a least gradient function with trace $g \in C(\partial\Omega)$. Then, $g(\partial\Omega_-) \subset g(\partial\Omega_+)$.
\end{lemma}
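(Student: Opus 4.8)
The plan is to deduce the inclusion from Lemma~\ref{lem:zawieranie} by a slicing argument. First note that, since $\partial\Omega_+$ and $\partial\Omega_-$ are connected and $g$ is continuous, the images $g(\partial\Omega_+)$ and $g(\partial\Omega_-)$ are compact intervals, say $[a_+,b_+]$ and $[a_-,b_-]$ (possibly degenerate). Hence the claim $g(\partial\Omega_-)\subseteq g(\partial\Omega_+)$ amounts to the two inequalities $a_+\le a_-$ and $b_-\le b_+$. The first of these is the analogue of the second for the least gradient function $-u$, whose trace is $-g$, so it is enough to prove $b_-\le b_+$.

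To prove $b_-\le b_+$ I would argue by contradiction, assuming $b_->b_+$. By a coarea-type identity for the trace operator, for a.e.\ $t\in\mathbb R$ the trace of $\chi_{\{u\ge t\}}$ coincides $\mathcal H^1$-a.e.\ on $\partial\Omega$ with $\chi_{\{g\ge t\}}$; I would fix such a level $t\in(b_+,b_-)$ and set $E_t=\{u\ge t\}$, taking its precise representative. On the one hand $t>b_+=\max_{\partial\Omega_+}g$, so $g_+^{-1}(t)=\emptyset$ and $\{g\ge t\}\cap\partial\Omega_+=\emptyset$; on the other hand $t<b_-=\max_{\partial\Omega_-}g$, so $\{g\ge t\}\cap\partial\Omega_-\neq\emptyset$. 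Thus the trace of $\chi_{E_t}$ is neither identically $0$ nor identically $1$, and therefore $E_t$ differs from both $\emptyset$ and $\Omega$ by a null set; in particular $\chi_{E_t}$ is not constant in the connected set $\Omega$, so $\partial E_t$ has at least one connected component. By Theorem~\ref{twierdzeniezbgg} the set $E_t$ is minimal, so, as recalled in Section~2, each connected component of $\partial E_t$ is a line segment lying in $\Omega$ with both endpoints on $\partial\Omega$, and no such segment can join two points of $\partial\Omega_-$ (a chord of the convex set $\Omega_-$ is contained in $\overline{\Omega_-}$, hence disjoint from $\Omega$). Consequently every component of $\partial E_t$ has an endpoint on $\partial\Omega_+$, so $\partial E_t\cap\partial\Omega_+\neq\emptyset$; but Lemma~\ref{lem:zawieranie} gives $\partial\{u\ge t\}\cap\partial\Omega_+\subseteq g_+^{-1}(t)=\emptyset$, a contradiction. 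Hence $b_-\le b_+$, and the lemma follows.

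The geometry here is light — the proof is essentially an assembly of Theorem~\ref{twierdzeniezbgg}, Lemma~\ref{lem:zawieranie}, and the structure of boundaries of minimal sets on annuli recalled in Section~2 — so I expect the one point requiring care to be the passage from the continuous datum $g$ to a single good level $t$: one must invoke the layer-cake identity that for a.e.\ $t$ the trace of $\{u\ge t\}$ is the superlevel set $\{g\ge t\}$ of the trace, and then check that the interval $(b_+,b_-)$ does contain such a level for which $E_t$ is nontrivial. Phrasing the dichotomy as ``$E_t$ is neither $\emptyset$ nor $\Omega$'', rather than tracking where $g$ is monotone on $\partial\Omega_-$, has the advantage of automatically covering the borderline configurations, such as $g$ being constant on $\partial\Omega_-$.
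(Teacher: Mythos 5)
Your argument is correct and follows essentially the same route as the paper: reduce to intervals, pick a level $t$ above the range of $g_+$, use Lemma~\ref{lem:zawieranie} to rule out $\partial E_t$ meeting $\partial\Omega_+$, and conclude via the impossibility of a segment of $\partial E_t$ joining two points of $\partial\Omega_-$ (the paper handles the degenerate case as ``$\partial E_t=\emptyset$ forces $E_t=\Omega$, violating the trace on $\partial\Omega_+$'', which is the same dichotomy you phrase via the trace of $\chi_{E_t}$). Only a wording slip to fix: you mean $E_t$ differs from $\emptyset$ and from $\Omega$ by \emph{more} than a null set, not ``by a null set''.
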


\begin{proof}
As $\partial\Omega_\pm$ are compact and connected while $g$ is continuous, then the images $g(\partial\Omega_-)$ and $g(\partial\Omega_+)$ are intervals. Suppose that the inclusion does not hold; then choose $t \in g(\partial\Omega_-) \backslash g(\partial\Omega_+)$. Without loss of generality, assume that $t$ is greater than any element from $g(\partial\Omega_+)$. Consider the set $\partial \{ u \geq t \}$; if it is empty, then $\{u \geq t\} = \Omega$, which violates the trace condition on $\partial\Omega_+$. If it is not empty, Lemma \ref{lem:zawieranie} implies that the set $\partial E_t \cap \partial\Omega_+$ is empty; hence there is a line segment in $\partial E_t$ which has both ends in $\partial\Omega_-$, which is a contradiction. $\qedhere$
\end{proof}

\section{On the equivalence between the BV least gradient problem and the optimal transport}\label{Sec. 3}

The aim of this Section is to study the equivalences between the least gradient problem \eqref{BV least gradient problem on annulus}, the Beckmann problem \eqref{BeckmannIntro} and the classical Monge-Kantorovich problem \eqref{KantoIntro}. Throughout this Section, $\Omega \subset \mathbb{R}^2$ is assumed to be an annulus in the sense of Definition \ref{dfn:annulus}. Firstly, we show a relationship between solutions to the following problems:
\begin{equation} \label{Beckmann problem}
  \min \bigg\{ \int_{\bar{\Omega}} |v|\,:\, v \in \mathcal{M}(\overline{\Omega}; \mathbb{R}^2),\,\,\nabla\cdot v = f \bigg\}
\end{equation}
and 
\begin{equation} \label{shifted BV least gradient problem}
   \min\bigg\{ \int_\Omega |D u| \,:\,u \in BV(\Omega),\, \partial_\tau(Tu) = f \bigg\},
\end{equation}
where $\partial_\tau(Tu) = f$ is equivalent to saying that $Tu=g$ on $\partial\Omega$ for some $g$ such that $f = \partial_\tau g$, up to adding a constant on each connected component of $\partial\Omega$. The divergence condition in \eqref{Beckmann problem} is understood in the distributional sense: for every $\phi \in C^1(\bar{\Omega})$, we have $\int_{\bar{\Omega}} \nabla \phi \cdot \mathrm{d}v=\int_{\partial\Omega} \phi\,\mathrm{d}f$. In other words, we have $\nabla \cdot v = 0$ in $\Omega$\, and $v \cdot \nu_{|\partial\Omega} = f$. Moreover, the boundary condition in \eqref{shifted BV least gradient problem} is understood in the sense of traces. Furthermore, as $f$ is a tangential derivative of a $BV$ function on the closed sets $\partial\Omega_\pm$, it will be subject to a mass balance condition, i.e. 
$$ f_+(\partial\Omega_+) = f_-(\partial\Omega_-) = 0.$$
   
It is important to stress that while problem \eqref{Beckmann problem} is the usual Beckmann problem (also called the free material design problem), problem \eqref{shifted BV least gradient problem} is not the usual least gradient problem (i.e., the one with constraint $T u =g$). Here, we minimize $\int_\Omega |D u|$ over a wider range of boundary data. As $\partial\Omega$ is not connected, if we shift $g$ by a constant on any of the connected components of $\partial\Omega$, we change the boundary value in \eqref{shifted BV least gradient problem}, but it remains the same in \eqref{Beckmann problem}; hence, the formulation of \eqref{shifted BV least gradient problem} involves minimization over the set of all $g$ such that $f = \partial_\tau g$, i.e. $f$ is the tangential derivative of $g$. Clearly, if $u \in BV(\Omega)$ solves \eqref{shifted BV least gradient problem}, then it also solves the standard least gradient problem with boundary data $Tu$. We will come back to this issue at the end of Section \ref{Sec. 4}. \\



The main idea, coming from \cite{Gorny0}, is to take an admissible function $u$ in \eqref{shifted BV least gradient problem} and use its rotated gradient $v = R_{ \frac{\pi}{2}} \nabla u$; as in dimension two, a rotation of a gradient by $\frac{\pi}{2}$ is a divergence-free field in $\Omega$ and rotation interchanges the normal and tangent components at the boundary, this is an admissible vector field in \eqref{Beckmann problem}. This fact was shown as a step in the proof of \cite[Theorem 2.1]{Gorny0} and is formalized in the following proposition; we present the proof for completeness.

\begin{proposition}\label{prop:admissibility}
Suppose that $\Omega \subset \mathbb{R}^2$ is an open bounded set with Lipschitz boundary and let $u \in BV(\Omega)$ with trace $Tu = g$. Then, $v = R_{ \frac{\pi}{2}}\nabla u$ is a vector-valued measure such that $\nabla \cdot v = f$, where $f = \partial_\tau g$. In particular, it is an admissible function in \eqref{Beckmann problem}. 
\end{proposition}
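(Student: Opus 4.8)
The plan is to verify directly that $v = R_{\pi/2}\nabla u$ has distributional divergence equal to $f = \partial_\tau g$, by testing against an arbitrary $\phi \in C^1(\bar\Omega)$ and integrating by parts. First I would treat the smooth case to fix ideas: if $u \in C^\infty(\bar\Omega)$, then for the rotation $R_{\pi/2}(a,b) = (-b,a)$ one has $\nabla\cdot(R_{\pi/2}\nabla u) = -\partial_1\partial_2 u + \partial_2\partial_1 u = 0$ in $\Omega$, so the only contribution comes from the boundary term. Writing the integration by parts,
\begin{equation*}
\int_{\bar\Omega} \nabla\phi \cdot \dd v = \int_{\bar\Omega} \nabla\phi \cdot R_{\pi/2}\nabla u = \int_{\partial\Omega} \phi\, (R_{\pi/2}\nabla u)\cdot \nu\, \dd\haus^1,
\end{equation*}
and then the key algebraic identity is $(R_{\pi/2}\nabla u)\cdot\nu = \nabla u \cdot R_{-\pi/2}\nu = \nabla u \cdot \tau = \partial_\tau u$, where $\tau$ is the unit tangent obtained by rotating the outward normal $\nu$ by $-\pi/2$ (with the orientation convention that makes this the positively oriented tangent). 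Since $Tu = g$, this gives $\int_{\partial\Omega}\phi\,\partial_\tau g\,\dd\haus^1 = \int_{\partial\Omega}\phi\,\dd f$, which is exactly the distributional identity $\nabla\cdot v = f$.

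Next I would pass from the smooth case to general $u \in BV(\Omega)$. Since $v = R_{\pi/2}Du$ is a finite vector-valued measure on $\bar\Omega$ (the rotation is a linear isometry, so $|v| = |Du|$ as measures, hence $v \in \M(\bar\Omega;\R^2)$), the statement to check is purely the divergence identity. I would approximate $u$ by smooth functions $u_n$ converging to $u$ strictly in $BV(\Omega)$, so that $Du_n \destar Du$ and, by a standard trace-continuity result for strict convergence, $Tu_n \to Tu = g$ in $L^1(\partial\Omega)$; rotating, $R_{\pi/2}Du_n \destar R_{\pi/2}Du = v$. Applying the smooth case to each $u_n$ and passing to the limit in $\int_{\bar\Omega}\nabla\phi\cdot \dd(R_{\pi/2}Du_n) = \int_{\partial\Omega}\phi\,\partial_\tau(Tu_n)\,\dd\haus^1$ — the left side converges by weak-* convergence against the continuous function $\nabla\phi$, and the right side, after another integration by parts on the one-dimensional manifold $\partial\Omega$, equals $-\int_{\partial\Omega}\partial_\tau\phi\, Tu_n\,\dd\haus^1 \to -\int_{\partial\Omega}\partial_\tau\phi\, g\,\dd\haus^1 = \int_{\partial\Omega}\phi\,\dd f$ — yields the claim. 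Finally, admissibility in \eqref{Beckmann problem} is immediate: $v$ is a vector measure on $\bar\Omega$ with $\nabla\cdot v = f$, which is precisely the constraint there.

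The main subtlety, rather than a deep obstacle, is bookkeeping with orientations and the fact that $\partial\Omega$ is disconnected: one must fix consistent orientations of $\partial\Omega_+$ and $\partial\Omega_-$ so that the tangential derivative $\partial_\tau g$ is well-defined and the sign in $(R_{\pi/2}\nabla u)\cdot\nu = \partial_\tau u$ comes out correctly on each component (the outer and inner boundaries are traversed in opposite senses relative to $\Omega$). A second point requiring a little care is that the integration-by-parts formula $\int_{\bar\Omega}\nabla\phi\cdot\dd v = \int_{\partial\Omega}\phi\,\dd f$ should be read as the definition of $\nabla\cdot v$ in the distributional sense used throughout Section~\ref{Sec. 3}, so the proof really amounts to producing this identity for all $\phi \in C^1(\bar\Omega)$; no regularity of $v$ beyond being a finite measure is needed, and the mass-balance conditions $f_\pm(\partial\Omega_\pm) = 0$ follow automatically since $\partial_\tau g$ integrates to zero over each closed component.
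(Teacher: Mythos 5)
Your proof is correct and follows essentially the same route as the paper's: approximation by smooth functions in the strict $BV$ topology, the pointwise identity $\nabla\cdot(R_{\frac{\pi}{2}}\nabla u_n)=0$, integration by parts to transfer the tangential derivative onto the test function along $\partial\Omega$, and passage to the limit using weak-$*$ convergence of the gradients together with continuity of the trace under strict convergence. Nothing further is needed.
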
 
\begin{proof}
Let $u_n \in C^\infty(\Omega) \cap BV(\Omega)$ be a sequence which converges to $u$ in strict topology of $BV(\Omega)$, i.e. $u_n \rightarrow u$ in $L^1$ and $\int_\Omega |\nabla u_n| \, dx \rightarrow \int_\Omega |Du|$. 
We notice that the rotated gradients of $u_n$ have zero divergence inside $\Omega$, as for smooth functions
$$ \mbox{div}(R_{\frac{\pi}{2}} \nabla u_n) = \mbox{div}(-(u_n)_{x_2}, (u_n)_{x_1}) = -(u_n)_{x_2 x_1} + (u_n)_{x_1 x_2} = 0.$$
Integrating by parts, we get 
\begin{align*}
\int_\Omega R_{\frac{\pi}{2}} \nabla u_n(x) \cdot \nabla \phi(x)\,\mathrm{d}x &= \int_{\partial\Omega}R_{\frac{\pi}{2}} \nabla u_n (x)\cdot \nu(x)\,\phi(x)\,\mathrm{d}\mathcal{H}^1(x)\\
&= -\int_{\partial\Omega} Tu_n(x) \, \partial_\tau\phi(x)\,\mathrm{d}\mathcal{H}^1(x),\,\,\,\mbox{for all}\,\,\,\phi \in C^1(\bar{\Omega}).
\end{align*}
Yet, $\nabla u_n \rightharpoonup Du$ 
and the trace operator is continuous with respect to the strict convergence. Then, passing to the limit, we obtain $\nabla \cdot\left( R_\frac{\pi}{2} Du\right) = 0$\, in \,$\Omega$\, and \,$R_\frac{\pi}{2} Du \cdot \nu|_{\partial\Omega} = \partial_\tau (Tu) = f$. $\qedhere$
\end{proof} 
We point out that while Proposition \ref{prop:admissibility} does not require $f$ to be a measure, merely a continuous functional over $\text{Lip}(\partial\Omega)$, in this paper we require $f$ to be a measure supported on $\partial\Omega$ in order to obtain a converse result.\\

In the other direction, the authors of \cite{Gorny0} proved that if the domain $\Omega$ is strictly convex, a vector field $v \in L^1(\Omega, \mathbb{R}^2)$ admissible in \eqref{Beckmann problem} produces a function $u \in W^{1,1}(\Omega)$ admissible in \eqref{shifted BV least gradient problem}. However, their proof involves definition of $u$ as an integral of a certain 1-form; in our setting, it fails due to the fact that $\Omega$ is not simply-connected and the integral may depend on the choice of a path. In the next proposition, we use the result in the convex case to resolve this problem.

\begin{proposition}\label{prop:recovery}
Let $\Omega \subset \mathbb{R}^2$ be an annulus. Suppose that $v \in L^1(\Omega,\mathbb{R}^2)$ is such that $\nabla\cdot v = f$ in $\mathbb{R}^2$ as distributions, where $f \in \mathcal{M}(\partial\Omega)$ is a measure such that $f(\partial\Omega_\pm) = 0$. Then, there exists $u \in W^{1,1}(\Omega)$ such that $v = R_{\frac{\pi}{2}} \nabla u$. In particular,
$$ \int_\Omega |v|\,dx = \int_\Omega |\nabla u|\, dx.$$
Moreover, if \,$Tu = g$ then $f = \partial_\tau g$.
\end{proposition}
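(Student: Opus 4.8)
The plan is to reduce to the simply-connected case by ``cutting'' the annulus. First I would fix a point $p_- \in \partial\Omega_-$ and a point $p_+ \in \partial\Omega_+$, and join them by a Lipschitz simple arc $\sigma \subset \overline{\Omega}$ meeting $\partial\Omega$ only at its endpoints. The set $\widetilde{\Omega} := \Omega \setminus \sigma$ is then simply-connected (and open, with Lipschitz boundary away from the two endpoints of $\sigma$). On $\widetilde{\Omega}$ the vector field $v \in L^1(\widetilde\Omega;\mathbb{R}^2)$ still satisfies $\nabla \cdot v = 0$ in the interior, and the $1$-form $\omega := v_2 \, \mathrm{d}x_1 - v_1 \, \mathrm{d}x_2$ (whose integral along a path gives the candidate $u$, as in \cite{Gorny0}) is closed; since $\widetilde\Omega$ is simply-connected, $\omega$ is exact, so there exists $u \in W^{1,1}(\widetilde\Omega)$ with $\nabla u = (v_2, -v_1)$, i.e. $v = R_{\frac\pi2}\nabla u$ on $\widetilde\Omega$. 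Equivalently, one can quote the convex-domain construction of \cite{Gorny0} applied after mapping $\widetilde\Omega$ conformally (or bi-Lipschitz) to a convex set, but the direct $1$-form argument is cleaner.

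The key step is then to show that $u$ does not actually jump across the cut $\sigma$, so that it extends to a genuine $W^{1,1}(\Omega)$ function. The jump of $u$ across $\sigma$ at a point $x\in\sigma$ equals the circulation $\int_{\ell_x} \omega$ over a small loop $\ell_x$ around $x$ that crosses $\sigma$ once; by homotopy invariance of the integral of the closed form $\omega$ in $\Omega$, this is independent of $x$ and equals $\int_{\gamma} \omega$ for any closed curve $\gamma \subset \Omega$ generating $\pi_1(\Omega) \cong \mathbb{Z}$, say a curve $\gamma$ encircling $\Omega_-$. Now I compute $\int_\gamma \omega$: deforming $\gamma$ toward $\partial\Omega_-$ and using $\nabla \cdot v = 0$ in $\Omega$ together with the distributional identity $\nabla \cdot v = f$ in $\mathbb{R}^2$, the circulation of $\omega$ around $\gamma$ equals the flux $\int_{\partial\Omega_-} v\cdot\nu\, \mathrm{d}\mathcal{H}^1 = f(\partial\Omega_-)$ (up to sign, using that rotation by $\tfrac\pi2$ turns the normal flux of $v$ into the tangential circulation of $R_{\frac\pi2}v$). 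By hypothesis $f(\partial\Omega_-) = 0$, so the jump vanishes and $u \in W^{1,1}(\Omega)$; the identity $\int_\Omega |v| = \int_\Omega |\nabla u|$ is then immediate from $v = R_{\frac\pi2}\nabla u$ and $|R_{\frac\pi2} w| = |w|$.

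For the last sentence, suppose $Tu = g$. Applying Proposition \ref{prop:admissibility} to this very $u$ gives that $R_{\frac\pi2}\nabla u$ is a vector measure with $\nabla \cdot (R_{\frac\pi2}\nabla u) = \partial_\tau g$; but $R_{\frac\pi2}\nabla u = v$, whose distributional divergence is $f$ by assumption, so $f = \partial_\tau g$.

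I expect the main obstacle to be the flux computation justifying that the jump of $u$ across $\sigma$ is exactly $f(\partial\Omega_-)$: one must handle the fact that $v$ is merely $L^1$ (so boundary traces of $v\cdot\nu$ on $\partial\Omega_-$ and on approximating curves $\gamma$ need the divergence-measure-field framework, e.g. Anzellotti-type normal traces), and carefully match the orientation conventions so that the mass-balance condition $f(\partial\Omega_-)=0$ is the one that kills the period. A secondary, more technical point is checking that the cut $\widetilde\Omega$ can be chosen so that $u$ defined on it has a well-defined trace on both sides of $\sigma$ and on $\partial\Omega$, which is routine since $\sigma$ is a Lipschitz arc transverse to the (strictly convex, hence $C^1$ a.e.) boundary.
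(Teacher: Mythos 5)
Your argument is correct in outline, but it takes a genuinely different route from the paper. You cut the annulus along an arc $\sigma$ joining the two boundary components, solve the problem on the simply-connected slit domain via exactness of the closed $1$-form $\omega = v_2\,\mathrm{d}x_1 - v_1\,\mathrm{d}x_2$, and then kill the period (the jump across $\sigma$) by identifying it with the flux of $v$ around $\Omega_-$, which vanishes by the mass-balance hypothesis $f(\partial\Omega_-)=0$. The paper instead \emph{fills the hole}: it picks $g_-\in BV(\partial\Omega_-)$ with $\partial_\tau g_- = f_-$ (this is where $f(\partial\Omega_-)=0$ enters there), takes any $w\in W^{1,1}(\Omega_-)$ with trace $g_-$, and adds the zero-extension of $R_{\frac{\pi}{2}}\nabla w$ to the zero-extension of $v$; the resulting field on the convex set $\Omega_+$ has divergence $f_+$, so the already-established convex-domain result of \cite{Gorny0} applies directly on $\Omega_+$, and $u$ is obtained by restriction. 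The two proofs use the same hypothesis in dual ways (vanishing period versus existence of a primitive of $f_-$ on the circle), but the paper's construction buys you a complete bypass of the technical obstacle you yourself flag as the main risk: it never needs to make sense of the flux $\int_\gamma v\cdot\nu$ of a merely $L^1$ field across interior curves, nor of one-sided traces on the slit, since everything is handled distributionally and delegated to the known convex case. If you pursue your route, you do need to supply the divergence-measure-field (Anzellotti-type) normal-trace argument, or an averaging argument over a foliation of curves homotopic to $\partial\Omega_-$, to justify that the period equals $f(\partial\Omega_-)$; with that supplied, your proof is complete, and your derivation of $f=\partial_\tau g$ from Proposition \ref{prop:admissibility} in the last step is a clean alternative to the paper's appeal to the trace identities of \cite{Gorny0} on $\partial\Omega_\pm$.
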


\begin{proof}
1. Denote $f_\pm = f|_{\partial\Omega_\pm}$. Let $v \in L^1(\Omega, \mathbb{R}^2)$ be such that $\nabla \cdot v = f$ in $\mathbb{R}^2$ as distributions. To be precise, if we take $\widetilde{v}$ to be a vector field in $L^1(\mathbb{R}^2,\mathbb{R}^2)$ defined as
$$ \widetilde{v} = \twopartdef{v}{\mbox{ in }\, \Omega,}{0}{ \mbox{ in }\, \mathbb{R}^2 \backslash {\Omega},}$$
then $\nabla \cdot \widetilde{v} = f$ as distributions in $\mathbb{R}^2$. We want to extend this vector field in a different way so that $\nabla \cdot v = f_+$. To this end, let $g_- \in BV(\partial \Omega_-)$ be such that $f_- = \partial_\tau g_-$ and take any $w \in W^{1,1}(\Omega_-)$ such that $T_{\partial\Omega_-} w = g_-$.

2. Now, we take the rotated gradient of $w$. Then, $v' = R_{ \frac{\pi}{2}} \nabla w \in L^1(\Omega_-, \mathbb{R}^2)$ is a vector field such that $\nabla \cdot v' = - f_-$ (the minus sign comes from the fact that the orientation of $\partial\Omega_-$ as a boundary of $\Omega_-$ is opposite to its orientation as a part of the boundary of $\Omega$). Let $\widetilde{v'}$ be an extension of $v'$ by $0$ to the whole of $\mathbb{R}^2$ as above, i.e.
$$ \widetilde{v'} = \twopartdef{v'}{\mbox{ in }\, \Omega_-,}{0}{ \mbox{ in }\, \mathbb{R}^2 \backslash {\Omega}_-.}$$
So, we have $\nabla \cdot (\widetilde{v} + \widetilde{v'}) = f_+$. Moreover, $(\widetilde{v} + \widetilde{v'})|_{\Omega} = v$.

3. Now, we use \cite[Proposition 2.1]{Gorny0} on $\Omega_+$ and obtain that there exists a function $\widetilde{u} \in W^{1,1}(\Omega_+)$ such that $\widetilde{v} + \widetilde{v'} = R_{\frac{\pi}{2}} \nabla \widetilde{u}$\, on $\Omega_+$. In particular, we have
$$ \int_{\Omega_+} |\widetilde{v} + \widetilde{v'}| \, dx = \int_{\Omega_+} |\nabla \widetilde{u}| \,  dx.$$
Moreover, $\partial_\tau (T_{\partial\Omega_+} \widetilde{u}) = f_+$. By applying again \cite[Proposition 2.1]{Gorny0} but this time on $\Omega_-$, we obtain also that
$\partial_\tau (T_{\partial\Omega_-} \widetilde{u}) = f_-$.
Now, set $u = \widetilde{u}|_{\Omega} \in W^{1,1}(\Omega)$. So, we see easily that 
$$ \int_{\Omega} |v| \, dx = \int_{\Omega} |\nabla u| \, dx.$$
Finally, the trace of $u$ is correct: clearly, $\partial_\tau(T_{\partial\Omega_+} u) = \partial_\tau (T_{\partial\Omega_+} \widetilde{u}) = f_+$. Moveover, as $|\nabla \widetilde{u}|(\partial\Omega_-) = 0$, the trace of $\widetilde{u}$ on $\partial\Omega_-$ from both sides coincides and, we have $T_{\partial\Omega_-} u = T_{\partial\Omega_-} \widetilde{u} = g_-$.
\end{proof}

When $v$ is merely a measure, we can employ a similar trick. However, we need one additional component: $v$ has to give no mass to the boundary. Otherwise, the trace of the obtained function $u$ would be incorrect (see also the discussion in \cite{Dweik}). So, we have the following:

\begin{proposition}\label{prop:measurerecovery}
Let $\Omega \subset \mathbb{R}^2$ be an annulus. Suppose that $v \in \mathcal{M}(\overline{\Omega}, \mathbb{R}^2)$ is such that $|v|(\partial\Omega) = 0$ and $\nabla\cdot v = f$ as distributions, where $f \in \mathcal{M}(\partial\Omega)$ is a measure such that $f(\partial\Omega_\pm) = 0$. Then, there exists $u \in BV(\Omega)$ such that $v= R_{\frac{\pi}{2}} D u$. In particular,
$$ \int_{\overline{\Omega}} |v| = \int_\Omega |D u|.$$
Moreover, if $Tu = g$ then $f = \partial_{\tau} g$. 
\end{proposition}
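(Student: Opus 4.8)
The plan is to reduce the measure case to the $W^{1,1}$ case already handled in Proposition \ref{prop:recovery}, using the same extension-across-the-inner-domain trick, but now being careful to track the mass on the boundary. First I would set $f_\pm = f|_{\partial\Omega_\pm}$ and, exactly as before, pick $g_- \in BV(\partial\Omega_-)$ with $f_- = \partial_\tau g_-$ and $w \in W^{1,1}(\Omega_-)$ with $T_{\partial\Omega_-} w = g_-$; then $v' = R_{\frac{\pi}{2}} \nabla w \in L^1(\Omega_-,\mathbb{R}^2)$ satisfies $\nabla\cdot v' = -f_-$ (the sign coming from the reversed orientation). Extending $v$ by zero outside $\Omega$ to get $\widetilde v$ and $v'$ by zero outside $\Omega_-$ to get $\widetilde{v'}$, I obtain a measure $\widetilde v + \widetilde{v'} \in \mathcal{M}(\overline{\Omega_+},\mathbb{R}^2)$ with $\nabla\cdot(\widetilde v + \widetilde{v'}) = f_+$ in $\mathbb{R}^2$, and crucially $|\widetilde v + \widetilde{v'}|(\partial\Omega_+) = 0$, since $|v|(\partial\Omega)=0$ by hypothesis and $v'$ is an $L^1$ field supported in $\Omega_-$, so it charges neither $\partial\Omega_+$ nor $\partial\Omega_-$.

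Next I would invoke the recovery result for the strictly convex domain $\Omega_+$ in the measure-valued setting — namely the analogue of \cite[Proposition 2.1]{Gorny0} for vector measures giving zero mass to the boundary, which produces $\widetilde u \in BV(\Omega_+)$ with $\widetilde v + \widetilde{v'} = R_{\frac{\pi}{2}} D\widetilde u$ and hence $\int_{\overline{\Omega_+}} |\widetilde v + \widetilde{v'}| = \int_{\Omega_+} |D\widetilde u|$, together with $\partial_\tau(T_{\partial\Omega_+}\widetilde u) = f_+$. Since $|D\widetilde u|$ gives zero mass to $\partial\Omega_-$ (because $\widetilde v + \widetilde{v'}$ does), the traces of $\widetilde u$ on $\partial\Omega_-$ from inside and outside $\Omega_-$ agree, and applying the convex-case recovery again on $\Omega_-$ identifies this common trace: on $\Omega_-$ we have $\widetilde v + \widetilde{v'} = \widetilde{v'} = R_{\frac{\pi}{2}}\nabla w$ (since $\widetilde v \equiv 0$ there), so $\widetilde u - w$ is locally constant on $\Omega_-$, giving $\partial_\tau(T_{\partial\Omega_-}\widetilde u) = f_-$ and in fact $T_{\partial\Omega_-}\widetilde u = g_-$ up to an additive constant. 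Setting $u = \widetilde u|_{\Omega} \in BV(\Omega)$, I then have $v = (\widetilde v + \widetilde{v'})|_\Omega = R_{\frac{\pi}{2}} Du$, and comparing total variations: $\int_{\overline\Omega}|v| = \int_{\overline{\Omega_+}}|\widetilde v + \widetilde{v'}| - \int_{\overline{\Omega_-}}|\widetilde{v'}| = \int_{\Omega_+}|D\widetilde u| - \int_{\Omega_-}|\nabla w| = \int_\Omega |Du|$, where the last equality uses that $|D\widetilde u|$ charges neither $\partial\Omega_\pm$ and that $\widetilde u = w + \text{const}$ on $\Omega_-$.

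Finally, for the trace statement: if $Tu = g$, then $g_+ = T_{\partial\Omega_+} u = T_{\partial\Omega_+}\widetilde u$ and $g_- = T_{\partial\Omega_-} u = T_{\partial\Omega_-}\widetilde u$, so $\partial_\tau g_\pm = f_\pm$ on each component, i.e. $f = \partial_\tau g$; here the fact that $|v|(\partial\Omega)=0$ is what guarantees $u$ has the ``right'' trace and there is no extra jump contribution on $\partial\Omega$. The main obstacle I anticipate is making the zero-boundary-mass bookkeeping fully rigorous when passing from $\Omega_+$ back to $\Omega$ — in particular justifying that $R_{\frac{\pi}{2}} D\widetilde u$ restricted to $\Omega$ really equals $v$ as a measure on $\overline\Omega$ (not just on the open set $\Omega$), which relies on $|D\widetilde u|(\partial\Omega_+)=0$ and $|D\widetilde u|(\partial\Omega_-)=0$; once those are established the rest is an exact parallel of Proposition \ref{prop:recovery}.
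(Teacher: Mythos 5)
Your proposal is correct and follows exactly the route the paper intends: the paper states this proposition without a written proof, remarking only that one "employs a similar trick" to Proposition \ref{prop:recovery} with the extra hypothesis $|v|(\partial\Omega)=0$, and your argument is precisely that trick carried out — extend across $\Omega_-$ by the rotated gradient of a $W^{1,1}$ lift of $g_-$, apply the measure-valued recovery result on the strictly convex $\Omega_+$ (the zero boundary mass being exactly what makes that applicable and what keeps the trace from acquiring a jump on $\partial\Omega$), and restrict back to $\Omega$. The bookkeeping you flag at the end is handled correctly by the observation that $|D\widetilde u|(\partial\Omega_\pm)=0$, so no further work is needed.
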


Now, we are ready to prove the equivalence of problems \eqref{Beckmann problem} and \eqref{shifted BV least gradient problem}. This boils down to two distinct problems: proving that the infima of these problems are equal and to constructing solutions of one problem from the other one.

\begin{theorem}\label{thm:equivalence}
We have \,$\inf \eqref{Beckmann problem} = \inf \eqref{shifted BV least gradient problem}$. Moreover, from each solution $u \in BV(\Omega)$ of \eqref{shifted BV least gradient problem}, one can construct a solution to \eqref{Beckmann problem}. In the other direction, from each solution $v \in \mathcal{M}(\overline{\Omega}, \mathbb{R}^2)$ of \eqref{Beckmann problem}, one can construct a solution to \eqref{shifted BV least gradient problem}, provided that $|v|(\partial\Omega) = 0$.
\end{theorem}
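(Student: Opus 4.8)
The plan is to establish the equality of the infima by a two-sided inequality, using the admissibility results already proved, and then to upgrade the admissible constructions to solution constructions using lower semicontinuity and the direct method. First I would show $\inf \eqref{Beckmann problem} \le \inf \eqref{shifted BV least gradient problem}$: given any competitor $u \in BV(\Omega)$ with $\partial_\tau(Tu) = f$, Proposition \ref{prop:admissibility} yields $v = R_{\frac{\pi}{2}} Du \in \mathcal{M}(\overline{\Omega};\mathbb{R}^2)$ with $\nabla \cdot v = f$, and since rotation is an isometry one has $\int_{\overline{\Omega}} |v| = \int_\Omega |Du|$, so $v$ is a competitor in \eqref{Beckmann problem} with the same energy. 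For the reverse inequality $\inf \eqref{shifted BV least gradient problem} \le \inf \eqref{Beckmann problem}$, I would first argue that it suffices to test against flows $v$ with $|v|(\partial\Omega) = 0$: indeed, the minimum in \eqref{Beckmann problem} is attained (the admissible set is nonempty, e.g. by Proposition \ref{prop:admissibility} applied to any $u$ realizing the boundary data, weak-$*$ compactness of sublevel sets of the total variation, and lower semicontinuity of mass under weak-$*$ convergence together with closedness of the divergence constraint), and any optimal $v$ arises from the optimal transport / Beckmann correspondence as $v = -|v|\nabla\phi$ with transport rays joining $\spt(f^+)$ to $\spt(f^-)$ — and since $f^\pm$ live on $\partial\Omega$ while the transport rays are nondegenerate segments, $|v|$ does not charge $\partial\Omega$. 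Once $|v|(\partial\Omega)=0$, Proposition \ref{prop:measurerecovery} produces $u \in BV(\Omega)$ with $v = R_{\frac{\pi}{2}} Du$, $\partial_\tau(Tu) = f$, and $\int_\Omega |Du| = \int_{\overline{\Omega}} |v|$, giving the matching inequality and simultaneously the construction of a solution to \eqref{shifted BV least gradient problem} from an optimal flow.

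It remains to construct a solution to \eqref{Beckmann problem} from a solution $u$ of \eqref{shifted BV least gradient problem}. By the first inequality, the flow $v = R_{\frac{\pi}{2}} Du$ is admissible in \eqref{Beckmann problem} with $\int_{\overline{\Omega}} |v| = \int_\Omega |Du| = \inf \eqref{shifted BV least gradient problem}$, which by the established equality of infima equals $\inf \eqref{Beckmann problem}$; hence $v$ is optimal. Conversely, starting from a solution $v$ of \eqref{Beckmann problem} with $|v|(\partial\Omega)=0$, the function $u$ from Proposition \ref{prop:measurerecovery} satisfies $\int_\Omega |Du| = \int_{\overline{\Omega}} |v| = \inf \eqref{Beckmann problem} = \inf \eqref{shifted BV least gradient problem}$, so $u$ is optimal. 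This closes the loop.

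The main obstacle is the reverse inequality, specifically the reduction to flows that give no mass to $\partial\Omega$. In a strictly convex domain this follows from the structure theorem for optimal flows (every optimal $v$ equals $-|v|\nabla\phi$ supported on the transport rays, which are segments with endpoints on $\partial\Omega$ but interiors in $\Omega$), but here the annulus is non-convex, so I first need the equivalence of \eqref{Beckmann problem} with the Monge–Kantorovich problem \eqref{KantoIntro} on $\overline{\Omega}$ — and crucially that transport rays stay inside $\overline{\Omega}$ (this is exactly the content of the auxiliary Lemma \ref{transport rays inside the annulus} referenced in the text, which must be invoked here). Granting that, the argument that $|v|(\partial\Omega)=0$ for an optimal $v$ goes through: the transport density charges a neighborhood of $\partial\Omega$ only through ray segments whose relative length near the boundary tends to zero, and since no ray has both endpoints on $\partial\Omega_-$ and $\delta = \mathrm{dist}(\partial\Omega_+,\partial\Omega_-) > 0$ controls the geometry, one gets that the part of $|v|$ on $\partial\Omega$ vanishes. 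The remaining points — existence of a minimizer in \eqref{Beckmann problem}, and the technical Proposition \ref{prop:measurerecovery} whose proof is only sketched — are either standard applications of the direct method or already granted.
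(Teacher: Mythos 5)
Your first inequality and both ``from a minimizer of one problem to a minimizer of the other'' constructions coincide with the paper's argument (Proposition \ref{prop:admissibility} in one direction, Proposition \ref{prop:measurerecovery} in the other, plus the equality of infima). The genuine problem is your proof of the reverse inequality $\inf \eqref{shifted BV least gradient problem} \le \inf \eqref{Beckmann problem}$. You reduce it to the claim that the minimum of \eqref{Beckmann problem} is attained by a flow with $|v|(\partial\Omega)=0$, arguing via the Beckmann--Kantorovich correspondence and transport rays. But Theorem \ref{thm:equivalence} carries no structural hypotheses on $f$: the statement that transport rays stay inside $\overline{\Omega}$ is Lemma \ref{transport rays inside the annulus}, which is proved only under (H1)--(H4) from Section \ref{Sec. 4}, assumptions that are not available here. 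Without them the claim is simply false. Take, for instance, $f$ concentrated on $\partial\Omega_+$ at two points whose connecting segment crosses $\overline{\Omega}_-$: the optimal flow must go around the hole along a geodesic of $\overline{\Omega}$, part of which runs along $\partial\Omega_-$, so \emph{every} optimal $v$ charges $\partial\Omega$. (This is exactly why the theorem's last assertion carries the proviso ``provided that $|v|(\partial\Omega)=0$'' rather than asserting it.) In such a case your chain of reasoning produces no admissible $u$, yet the equality of infima still holds --- the infimum of \eqref{shifted BV least gradient problem} is just not attained.

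The paper avoids this entirely: it takes a minimizing sequence $v_n \in L^1(\Omega,\mathbb{R}^2)$ for \eqref{Beckmann problem} and applies Proposition \ref{prop:recovery} (the $L^1$ recovery statement, which needs no zero-boundary-mass hypothesis because an $L^1$ field automatically gives no mass to $\partial\Omega$) to produce admissible $u_n \in W^{1,1}(\Omega)$ with $\int_\Omega |\nabla u_n|\,dx = \int_\Omega |v_n|\,dx$, whence $\inf \eqref{shifted BV least gradient problem} \le \inf \eqref{Beckmann problem}$. You should replace your reduction by this argument. (If you want to be more careful than the paper, you would also justify that $L^1$ fields are energy-dense among admissible measures; but routing through attainment and the ray structure of optimizers is not a repair, since the needed structure is unavailable at this stage and false in general.)
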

 
\begin{proof}
Suppose that $v_n \in L^1(\Omega, \mathbb{R}^2)$ is a minimizing sequence in \eqref{Beckmann problem}. By Proposition \ref{prop:recovery}, for each $n$, there exists $u_n \in W^{1,1}(\Omega)$ admissible in \eqref{shifted BV least gradient problem} such that $v_n = R_{\frac{\pi}{2}} \nabla u_n$. Hence
$$ \inf \eqref{Beckmann problem} \longleftarrow \int_\Omega |v_n| = \int_{\Omega} |\nabla u_n| \geq \inf \eqref{shifted BV least gradient problem}.$$
Conversely, suppose that $u_n \in BV(\Omega)$ is a minimizing sequence in \eqref{shifted BV least gradient problem}. By Proposition \ref{prop:admissibility}, the vector fields $v_n = R_{\frac{\pi}{2}} D u_n$ are admissible in \eqref{Beckmann problem}. Hence
$$ \inf \eqref{shifted BV least gradient problem} \longleftarrow \int_\Omega |D u_n| = \int_\Omega |v_n| \geq \inf \eqref{Beckmann problem}.$$
Hence, the two infima are equal. Now, we turn to the issue of constructing solutions of one problem from the other one.

Let $u \in BV(\Omega)$ be a minimizer of \eqref{shifted BV least gradient problem}. Let $v = R_{\frac{\pi}{2}}  D u$; by Proposition \ref{prop:admissibility}, it is an admissible vector field in \eqref{Beckmann problem}. Moreover, we have
$$ \inf \eqref{shifted BV least gradient problem} = \int_\Omega |D u| = \int_\Omega |v| \geq \inf \eqref{Beckmann problem}.$$
Hence, $v$ is a minimizer of \eqref{Beckmann problem}.
  
Finally, let $v \in \mathcal{M}(\overline{\Omega}, \mathbb{R}^2)$ be a minimizer of \eqref{Beckmann problem} such that $|v|(\partial\Omega) = 0$. By Proposition \ref{prop:measurerecovery}, there exists $u \in BV(\Omega)$ admissible in \eqref{shifted BV least gradient problem} and $v = R_{\frac{\pi}{2}} D u$. Yet, one has
$$ \inf \eqref{Beckmann problem} =   \int_\Omega |v| = \int_\Omega |D u| \geq \inf \eqref{shifted BV least gradient problem},$$
which implies that this function $u$ is, in fact, a minimizer for the problem \eqref{shifted BV least gradient problem}.   $\qedhere$
\end{proof}

In particular, a solution $v \in \mathcal{M}(\overline{\Omega}, \mathbb{R}^2)$ of the Beckmann problem which satisfies $|v|(\partial\Omega) = 0$ generates a function $u \in BV(\Omega)$ which solves the least gradient problem for boundary data \,$g = Tu$. If the solution to the Beckmann problem is unique, then also the boundary data $g$ for which we can construct the solution of the least gradient problem is unique up to adding the same constants on both connected components of $\partial\Omega$; we cannot solve the least gradient problem for a shifted value of $g$ if we added two different constants on $\partial\Omega_\pm$. 
\begin{example}
Let $\Omega = B(0,2) \backslash \overline{B(0,1)}$. Consider $f \equiv 0 \in \mathcal{M}(\partial\Omega)$ to be boundary data in the Beckmann problem. Functions $g \in BV(\partial\Omega)$ such that $f = \partial_\tau g$ are of the form
$$ g = \twopartdef{c_-}{\text{ on }\, \partial B(0,1),}{c_+}{\text{ on }\, \partial B(0,2).}$$
We consider such boundary data in the least gradient problem. The solution to the Beckmann problem is unique and equals $v \equiv 0$ in $\overline{\Omega}$. Then, Theorem \ref{thm:equivalence} gives us a constant solution to problem \eqref{shifted BV least gradient problem}. It is a solution to the least gradient problem with $c_+ = c_-$. However, for $c_+\neq c_-$, the least gradient problem admits no solution.
\end{example}

On the other hand, it is also possible to show equivalence between the Beckmann problem \eqref{BeckmannIntro} and the Monge-Kantorovich one \eqref{KantoIntro}, in the case where the domain $\Omega$ is an annulus. From \cite{San2,Villani},
the Kantorovich problem  
\begin{equation}\label{Kanto}
    \min\bigg\{\int_{\bar{\Omega} \times \bar{\Omega}} |x-y|\,\mathrm{d}\gamma\,:\,\gamma \in  \mathcal{M}^+(\bar{\Omega} \times \bar{\Omega}),\,(\Pi_x)_{\#}\gamma=f^+ \,\,\mbox{and}\,\,(\Pi_y)_{\#} \gamma =f^-\bigg\}
\end{equation}
admits a dual formulation:
\begin{equation}\label{Kanto dual}
    \sup\bigg\{\int_{\bar{\Omega}} \phi\,\mathrm{d}(f^+ - f^-)\,:\,\phi \in  \Lip_1(\Omega)\bigg\}. 
\end{equation}
In fact, we have
$$  \min\bigg\{\int_{\bar{\Omega} \times \bar{\Omega}} |x-y|\,\mathrm{d}\gamma\,:\,\gamma \in  \mathcal{M}^+(\bar{\Omega} \times \bar{\Omega}),\,(\Pi_x)_{\#}\gamma=f^+ \,\,\mbox{and}\,\,(\Pi_y)_{\#} \gamma =f^-\bigg\}$$
$$=\min_{\gamma \in  \mathcal{M}^+(\bar{\Omega} \times \bar{\Omega})}\bigg\{\int_{\bar{\Omega} \times \bar{\Omega}} |x-y|\,\mathrm{d}\gamma \,+ \sup_{\phi^\pm \in C(\bar{\Omega})}\bigg\{\int_{\bar{\Omega}} \phi^+ \,\mathrm{d}f^+ - \int_{\bar{\Omega}} \phi^-\,\mathrm{d}f^-  -\int_{\bar{\Omega} \times \bar{\Omega}}\left[\phi^+(x) - \phi^-(y)\right]\mathrm{d}\gamma\bigg\} \bigg\}$$ 
$$=\min_{\gamma \in  \mathcal{M}^+(\bar{\Omega} \times \bar{\Omega})}\bigg\{\sup_{\phi^\pm \in C(\bar{\Omega})}\bigg\{\int_{\bar{\Omega} \times \bar{\Omega}} \left[|x-y|-(\phi^+(x) - \phi^-(y))\right]\,\mathrm{d}\gamma(x,y) + \int_{\bar{\Omega}} \phi^+ \,\mathrm{d}f^+ - \int_{\bar{\Omega}} \phi^-\,\mathrm{d}f^-  \bigg\} \bigg\}.$$\\
By a formal inf-sup exchange, we get
$$=\sup_{\phi^\pm \in C(\bar{\Omega})}\bigg\{\min_{\gamma \in  \mathcal{M}^+(\bar{\Omega} \times \bar{\Omega})}\bigg\{\int_{\bar{\Omega} \times \bar{\Omega}} \left[|x-y|-(\phi^+(x) - \phi^-(y))\right]\,\mathrm{d}\gamma(x,y) \bigg\} + \int_{\bar{\Omega}} \phi^+ \,\mathrm{d}f^+ - \int_{\bar{\Omega}} \phi^-\,\mathrm{d}f^-\bigg\}.$$
Yet,
$$ \min_{\gamma \in  \mathcal{M}^+(\bar{\Omega} \times \bar{\Omega})}\bigg\{\int_{\bar{\Omega} \times \bar{\Omega}} \left[|x-y|-(\phi^+(x) - \phi^-(y))\right]\,\mathrm{d}\gamma(x,y) \bigg\} =\begin{cases}
0  & \,\,\mbox{if}\,\,\,\,\phi^+(x) - \phi^-(y) \leq |x-y|,\\
- \infty & \,\,\mbox{else}.
\end{cases}$$
Finally, this yields that
$$\min\eqref{Kanto}$$
$$=\sup\bigg\{ \int_{\bar{\Omega}} \phi^+ \,\mathrm{d}f^+ - \int_{\bar{\Omega}} \phi^-\,\mathrm{d}f^-\,:\,\phi^\pm \in C(\bar{\Omega}),\,\phi^+(x) - \phi^-(y) \leq |x-y|\bigg\}.$$
But now, it is clear that we can assume $\phi^+(x):=\min\{|x-y| + \phi^-(y)\,:\,y \in \bar{\Omega}\}$, for every $x \in \bar{\Omega}$, and so, $\phi^-=\phi^+$.
From this duality result $\min \eqref{Kanto}=\sup \eqref{Kanto dual}$, we infer that optimal $\gamma$ and $\phi$ satisfy the following equality:
$$\int_{\bar{\Omega} \times \bar{\Omega}}\left[|x-y| - (\phi(x) - \phi(y))\right]\,\mathrm{d}\gamma(x,y)=0,$$
which implies that 
$$\phi(x) - \phi(y) = |x-y|\,\,\,\,\mbox{on}\,\,\,\,\spt(\gamma).$$
Let us introduce the following: 
\begin{definition}
We call transport ray any maximal segment \,$\left[x,y\right]$ satisfying \,$\phi(x)-\phi(y)=|x-y|$.
\end{definition}
 Following this definition, we see that an optimal
transport plan $\gamma$ has to move the mass along the transport rays.\\

Now, we prove equivalence between the Kantorovich problem \eqref{Kanto} and the Beckmann one \eqref{Beckmann problem}.
\begin{proposition} \label{Prop. 3.3}
Suppose that all the transport rays between $f^+$ and $f^-$ are inside the annulus \,$\Omega$. Let $\gamma$ be an optimal transport plan for \eqref{Kanto} and let us define the vectorial measure $w_\gamma$ as follows:
$$<w_\gamma,\xi>:=\int_{\bar{\Omega} \times \bar{\Omega}} \int_0^1 \xi((1-t)x+ty) \cdot (y - x) \,\,\mathrm{d}t\,\mathrm{d}\gamma (x,y),\,\,\,\,\mbox{for all}\,\,\,\,\xi \in C(\bar{\Omega},\mathbb{R}^2).$$\\
Then, $w_\gamma$ solves \eqref{Beckmann problem}. Moreover, we have $\min\eqref{Beckmann problem}=\sup\eqref{Kanto dual}=\min\eqref{Kanto}$.
\end{proposition}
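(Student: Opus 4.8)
The plan is to establish the chain $\min\eqref{Beckmann problem} = \sup\eqref{Kanto dual} = \min\eqref{Kanto}$ via a standard two-sided inequality argument, using the explicit formula for $w_\gamma$ to get one inequality and a disintegration-along-transport-rays argument (or a duality comparison) for the reverse. First I would verify that $w_\gamma$ is admissible in \eqref{Beckmann problem}, i.e. $\nabla \cdot w_\gamma = f$ as distributions: for $\phi \in C^1(\bar\Omega)$, by definition of $w_\gamma$ we have $\int_{\bar\Omega} \nabla\phi \cdot \mathrm{d}w_\gamma = \int_{\bar\Omega\times\bar\Omega}\int_0^1 \nabla\phi((1-t)x+ty)\cdot(y-x)\,\mathrm{d}t\,\mathrm{d}\gamma = \int_{\bar\Omega\times\bar\Omega}[\phi(y)-\phi(x)]\,\mathrm{d}\gamma = \int_{\bar\Omega}\phi\,\mathrm{d}(f^- - f^+)$ by the marginal conditions; matching this with the sign convention $\nabla\cdot v = f$ (recall $f = f^+ - f^-$, or the appropriate sign in the paper's normalization) shows $w_\gamma$ is admissible. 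Crucially, here one uses the hypothesis that all transport rays lie inside $\bar\Omega$ so that the segments $[(1-t)x+ty]$ stay in $\bar\Omega$ and the integrand is well-defined and $w_\gamma$ is genuinely supported in $\bar\Omega$.

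Next I would bound the cost: by the triangle inequality applied inside the definition of $w_\gamma$, $\int_{\bar\Omega}|w_\gamma| \le \int_{\bar\Omega\times\bar\Omega}\int_0^1 |y-x|\,\mathrm{d}t\,\mathrm{d}\gamma = \int_{\bar\Omega\times\bar\Omega}|x-y|\,\mathrm{d}\gamma = \min\eqref{Kanto}$. This gives $\min\eqref{Beckmann problem} \le \min\eqref{Kanto}$. For the reverse inequality, I would use weak duality between \eqref{Beckmann problem} and \eqref{Kanto dual}: for any admissible $v$ in \eqref{Beckmann problem} and any $\phi \in \Lip_1(\Omega)$, test the divergence constraint against $\phi$ to get $\int_{\bar\Omega}\phi\,\mathrm{d}(f^+ - f^-) = -\int_{\bar\Omega}\nabla\phi\cdot\mathrm{d}v \le \|\nabla\phi\|_\infty \int_{\bar\Omega}|v| \le \int_{\bar\Omega}|v|$ (taking a suitable Lipschitz extension of $\phi$ to $\bar\Omega$, which is harmless since the cost is Euclidean), hence $\sup\eqref{Kanto dual} \le \min\eqref{Beckmann problem}$. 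Combining with $\sup\eqref{Kanto dual} = \min\eqref{Kanto}$ (the duality established just above in the text) and the previous paragraph's inequality, all three quantities coincide and $w_\gamma$ is optimal for \eqref{Beckmann problem}.

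The main obstacle — and the only place the annulus geometry really enters — is the admissibility verification, specifically checking that $w_\gamma$ is supported in $\bar\Omega$ and that $\nabla\cdot w_\gamma = f$ holds as distributions in $\mathbb{R}^2$ (not just in $\Omega$), which requires the transport rays to remain inside $\bar\Omega$; without convexity of $\Omega$ this is a genuine hypothesis rather than an automatic consequence, and it is precisely why the statement carries the assumption "all the transport rays between $f^+$ and $f^-$ are inside the annulus $\Omega$." A secondary technical point is ensuring the Lipschitz test functions and the measure-theoretic integrations by parts are justified for measures $f^\pm$ concentrated on $\partial\Omega$; but since $\phi \in \Lip_1$ extends to a $1$-Lipschitz function on $\bar\Omega$ and $v \in \mathcal{M}(\bar\Omega;\mathbb{R}^2)$, these are standard once the divergence constraint is interpreted in the stated distributional sense. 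I would also remark that equality in the triangle inequality forces $w_\gamma$ to be aligned with the transport rays, recovering the structural fact that the optimal flow moves mass along rays, consistent with the $v = -|v|\nabla\phi$ description mentioned in the introduction.
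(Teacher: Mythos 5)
Your proposal is correct and follows essentially the same route as the paper: admissibility of $w_\gamma$ by testing against gradients, the bound $|w_\gamma|(\bar{\Omega})\leq\int|x-y|\,\mathrm{d}\gamma=\min\eqref{Kanto}$, and weak duality $\sup\eqref{Kanto dual}\leq\min\eqref{Beckmann problem}$ to close the chain of equalities. The sign bookkeeping you flag (whether the pairing yields $f^+-f^-$ or $f^--f^+$) is a real but harmless convention issue that the paper also glosses over; otherwise the two arguments coincide.
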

\begin{proof}
First, we see easily that $w_\gamma$ is admissible in \eqref{Beckmann problem} (this follows immediately by taking as a test function $\xi=\nabla \phi$). On the other hand, we have 
$$|w_\gamma|(\bar{\Omega}) \leq \int_{\bar{\Omega} \times \bar{\Omega}} |x-y|\,\mathrm{d}\gamma=\min \eqref{Kanto}=\sup \eqref{Kanto dual}.$$
Let $v$ be an admissible flow in \eqref{Beckmann problem} and let $\phi$ be a $C^1$ function such that $|\nabla \phi| \leq 1$. Then, one has 
$$\int_{\bar{\Omega}} \phi \,\mathrm{d}(f^+ - f^-)=\int_{\bar{\Omega}} \nabla \phi \cdot \mathrm{d}v \leq \int_{\bar{\Omega}} |v|.$$
This implies that 
$$ \sup \eqref{Kanto dual} \leq \min \eqref{Beckmann problem}.$$\\
Consequently, we get that $w_\gamma$ is a solution for \eqref{Beckmann problem}. And, we have $\min\eqref{Beckmann problem}=\sup\eqref{Kanto dual}=\min\eqref{Kanto}$.  
\end{proof} 
In addition, following \cite[Chapter 4]{San2} and using Proposition \ref{Prop. 3.3}, we are able to prove that every solution $w$ for the Beckmann problem \eqref{Beckmann problem} is of the form $w=w_\gamma$, for some optimal transport plan $\gamma$ for \eqref{Kanto}.
 
On the other hand, one can associate with $w_\gamma$ a scalar positive measure $\sigma_\gamma$ (which is called {\it{transport density}}):
$$<\sigma_\gamma,\varphi>:=\int_{\bar{\Omega} \times \bar{\Omega}} \int_0^1 \varphi((1-t)x+ty) |x-y| \,\,\mathrm{d}t\,\mathrm{d}\gamma (x,y),\,\,\,\,\mbox{for all}\,\,\,\,\varphi \in C(\bar{\Omega}).$$
Moreover, it is not difficult to see that if $\phi$ is a Kantorovich potential, between $f^+$ and $f^-$, then we have the following:
$$w_\gamma=-\sigma_\gamma \nabla \phi.$$
In this way, we get existence of a solution for the least gradient problem \eqref{shifted BV least gradient problem} as soon as the transport density $\sigma_\gamma$ gives zero mass to the boundary $\partial\Omega$ (i.e., $\sigma_\gamma(\partial\Omega)=0$). Moreover, we get uniqueness of the solution $u$ for \eqref{shifted BV least gradient problem} if $\sigma_\gamma$ does not depend on the choice of $\gamma$. We note that the uniqueness of the transport density $\sigma_\gamma:=\sigma$ holds as soon as $f^+$ or $f^-$ is in $L^1(\Omega)$, which is not the case here since $f$ is singular (it is supported on the boundary). Yet, we will show uniqueness of $\sigma$ under some  assumptions on $\Omega,\,f^+$ and $f^-$.
\section{Least gradient problem: existence and uniqueness}\label{Sec. 4}
In this section, we will prove that on an annulus $\Omega \subset \mathbb{R}^2$, under some admissibility assumptions on the boundary datum $g$, the least gradient problem
\begin{equation} \label{shifted BV least gradient problem original}
   \min\bigg\{ \int_{\Omega} |D u| \,:\,u \in BV(\Omega),\,T u=g \bigg\} 
\end{equation}
has a solution. We recall that we need to restrict to dimension $2$ because only in this framework we can use rotated gradients, and they have zero divergence. In addition, we will assume that $g \in BV(\partial\Omega)$ since, in this way, one has the equivalence between the Beckmann problem \eqref{Beckmann problem} and a version of the least gradient problem \eqref{shifted BV least gradient problem}. We start with proving existence of a solution to the Beckmann problem which gives no mass to the boundary and then pass through problem \eqref{shifted BV least gradient problem} to the least gradient problem \eqref{shifted BV least gradient problem original}. 

First of all, let us introduce our admissibility conditions. These are formally conditions on a Dirichlet datum $g$ in the least gradient problem; however, as they do not depend on the exact values of $g$, only on its structure and total variation, we may think of them equivalently as conditions on its tangential derivative $f = \partial_\tau g$:\\ 
\\
(H1)\,\,\,\,$g \in BV(\partial\Omega ).$\\ \\
(H2)\,\,\,\,$\partial\Omega_\pm$ can be decomposed into parts $(\chi_i^\pm)_i,\,(\Gamma_i^\pm)_i$ and $(F_i^\pm)_i$ such that\,:\\ \\ 
$\bullet\,\,\,$ On each $\chi_i^\pm$ (resp. $\Gamma_i^\pm$) the boundary datum $g$ is increasing (resp. decreasing) with $TV(g \res \chi_i^+) = TV(g\res \chi_i^-)$ and $TV(g \res \Gamma_i^+) = TV(g\res \Gamma_i^-)$.\\ \\
$\bullet\,\,\,$ For every $i$, we have $[\partial_{\tau}g_+](F_i^+)=0$ (this means that $F_i^+$ is a flat part or $F_i^+:=F_i^{++} \cup F_i^{+-}$ where $g$ is increasing on $F_i^{++}$ and decreasing on $F_i^{+-}$ with $TV(g_{|F_i^{++}})=TV(g_{|F_i^{+-}})$)
and \,$g_-$ is constant on $F_i^-$.\\ \\
$\bullet\,\,\,$ 
Between each two curves $\chi^\pm_i$ and $\Gamma^\pm_i$, there is a (flat) part $F_i^\pm$.\\ \\
In addition, we want to add a condition on $g$ which will be necessary to guarantee that all the transport rays are inside $\bar{\Omega}$. Before that, we need to introduce the following: 
\begin{definition} 
Let $\Gamma^\pm$ be two arcs on $\partial\Omega_\pm$. Then, we say that $\Gamma^+$ is visible from $\Gamma^-$ if the following holds:
$$\mbox{for all}\,\,\,x \in \Gamma^+\,\,\mbox{and}\,\,\,y \in \Gamma^-,\,\,\,\mbox{we have}\,\,[x,y] \subset \bar{\Omega}.$$
\end{definition}
\noindent So, our visibility condition should be the following:\\ \\ (H3)\,\,\,\,For every $i$, $\chi_i^+$ is visible from $\chi_i^-$, $\Gamma_i^+$ is visible from $\Gamma_i^-$ and $F_i^{++}$ is visible from $F_i^{+-}$.\\ \\
The second condition that we need so that all the transport rays lie inside $\bar{\Omega}$ is 
an inequality linking the locations of $\chi_i^\pm$,  $\Gamma_i^\pm$ and $F_i^{+\pm}$. Set $\Lambda^+=\bigcup_i \chi_i^+ \cup \Gamma_i^- \cup F_i^{++}$ and $\Lambda^-=\bigcup_i \chi_i^- \cup \Gamma_i^+ \cup F_i^{+-}$, then we assume\\ \\ 
$\mbox{(H4)\,\,\,For every}\,\,\,i,\,\,\mbox{we have the following:}$\\ \\
$\bullet\,\,\,d_M(\chi_i^+,\chi_i^-) +  d_M(\Lambda^+  \backslash \chi_i^+, \Lambda^-  \backslash \chi_i^-) <  \text{dist}(\chi_i^+,\Lambda^- \backslash \chi_i^-) +  \text{dist}(\chi_i^-,\Lambda^+  \backslash \chi_i^+),$ \\
$\bullet\,\,\,d_M(\Gamma_i^+ , \Gamma_i^-) +  d_M(\Lambda^+ \backslash \Gamma_i^-,\Lambda^- \backslash\Gamma_i^+)  <  \text{dist}(\Gamma_i^+,\Lambda^+ \backslash \Gamma_i^-) +  \text{dist}(\Gamma_i^-,\Lambda^- \backslash \Gamma_i^+),$\\
$\bullet\,\,\,d_M(F_i^{++},F_i^{+-}) +  d_M(\Lambda^+  \backslash F_i^{++}, \Lambda^-  \backslash F_i^{+-}) <  \text{dist}(F_i^{++},\Lambda^- \backslash F_i^{+-}) +  \text{dist}(F_i^{+-},\Lambda^+  \backslash F_i^{++}),$ \\ \\
where $d_M(\Gamma,\Gamma^\prime)$ denotes the maximal distance between two arcs $\Gamma$ and $\Gamma^\prime$, and $\mbox{dist}(\Gamma,\Gamma^\prime)$ is the minimal distance between them.\\ \\
Under the assumptions (H1), (H2), (H3) $\&$ (H4), we have the following:\\
\begin{lemma}\label{transport rays inside the annulus}
Set $f=\partial_\tau g$. Then, all the transport rays between $f^+$ and $f^-$ lie inside the annulus $\Omega$. More precisely, any transport ray \,$R$ is of the form $[x,y]$ with $x \in \chi_i^+$ and $y \in \chi_i^-$, $x \in \Gamma_i^-$ and $y \in \Gamma_i^+$ or \,$x,\,y \in F_i^+$, for some $i$.
\end{lemma}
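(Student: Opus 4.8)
The plan is to show that the optimal transport plan $\gamma$ for \eqref{Kanto} decomposes into pieces whose supports lie in the ``matched'' regions of $\partial\Omega_+$ and $\partial\Omega_-$, and then use a cyclical monotonicity / contradiction argument against (H4) to forbid any transport ray leaving $\bar{\Omega}$. First I would record the key consequences of (H2)--(H3): by the mass-balance conditions $TV(g\res\chi_i^+)=TV(g\res\chi_i^-)$, $TV(g\res\Gamma_i^+)=TV(g\res\Gamma_i^-)$, $TV(g_{|F_i^{++}})=TV(g_{|F_i^{+-}})$ and the fact that $\spt(f_-^+)\subset\Lambda^+$ while $\spt(f_-^-)\subset\Lambda^-$, each subarc of the partition carries exactly the right amount of mass of the appropriate sign so that, \emph{if} an optimal plan matched $\chi_i^+$ with $\chi_i^-$, $\Gamma_i^+$ with $\Gamma_i^-$, and $F_i^{++}$ with $F_i^{+-}$ for every $i$, the marginals would be respected; moreover by (H3) such a plan is admissible (all its rays lie in $\bar\Omega$). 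Call this candidate plan $\tilde\gamma$ (built, say, by transporting each increasing arc onto its partner in an order-preserving way, as is standard on convex curves).

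The heart of the argument is the following dichotomy: let $\gamma$ be \emph{any} optimal plan for \eqref{Kanto}. I claim its support is contained in the union over $i$ of $(\chi_i^+\times\chi_i^-)\cup(\Gamma_i^-\times\Gamma_i^+)\cup(F_i^+\times F_i^+)$. Suppose not; then there is a transport ray $[x,y]\in\spt(\gamma)$ with $x$ and $y$ in non-partner arcs, say $x\in\chi_i^+$ and $y\in\chi_j^-$ with $j\ne i$ (the other cross-cases $\chi$ vs.\ $\Gamma$, $\chi$ vs.\ $F$, etc., are handled the same way). Because $\tilde\gamma$ moves mass in the ``partner'' direction, by a routine interchange argument (swapping a small amount of mass between the two plans, or invoking cyclical monotonicity of $\gamma$ with two pairs, one from $\gamma$ and one from $\tilde\gamma$) one produces four points $x\in\chi_i^+$, $x'\in\chi_i^-$, $y'\in\chi_j^+$, $y\in\chi_j^-$ with
\begin{equation*}
|x-y|+|x'-y'|\le |x-x'|+|y-y'|.
\end{equation*}
Taking suprema and infima over the arcs, the left side is $\ge d_M(\chi_i^+,\Lambda^-\setminus\chi_i^-$-type quantities... more precisely $\ge \mathrm{dist}(\chi_i^+,\Lambda^-\setminus\chi_i^-)+\mathrm{dist}(\chi_i^-,\Lambda^+\setminus\chi_i^+)$, while the right side is $\le d_M(\chi_i^+,\chi_i^-)+d_M(\Lambda^+\setminus\chi_i^+,\Lambda^-\setminus\chi_i^-)$, directly contradicting the first bullet of (H4). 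The analogous computations against the second and third bullets of (H4) rule out the $\Gamma$--$\Gamma$ and $F^{++}$--$F^{+-}$ cross-matchings. Once $\spt(\gamma)$ is confined to partner pairs, (H3) says precisely that every such segment $[x,y]$ lies in $\bar\Omega$, so every transport ray lies inside the annulus, and the refined description of $R$ in the statement follows.

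I expect the main obstacle to be the bookkeeping in the interchange step: one must be careful that, at a point $p\in\spt(f_-^+)\cap\spt(f_-^-)$ or on a flat part $F_i^-$ where $g_-$ is constant, the mass of $f_-$ is genuinely split between $\Lambda^+$ and $\Lambda^-$ in the way (H2) dictates, and that the comparison plan $\tilde\gamma$ can indeed be chosen with the stated partner structure (this uses strict convexity of $\partial\Omega_\pm$ and the one-dimensional monotone-rearrangement structure of optimal transport between measures on a convex curve, together with Lemma \ref{lem:zawieranie} to locate the endpoints on $\partial\Omega_+$). A secondary subtlety is that transport rays cannot cross at interior points, which is what guarantees the limiting points $q^\pm$ in the cross-case are distinct and keeps the arcs on $\partial\Omega_+$ disjoint; this is exactly the mechanism already used in Propositions \ref{stw:specialconfiguration} and \ref{stw:finitelymanychanges}, and I would reuse it verbatim. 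Modulo these care points, the proof is a finite case check: three types of ``bad'' ray, each excluded by one bullet of (H4), plus the observation that ``good'' rays are admissible by (H3).
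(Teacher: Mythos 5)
Your proposal follows essentially the same route as the paper's proof: a case analysis on which arc the source endpoint $x$ belongs to, production of a second ``crossing'' pair of points, and a four-point cyclical-monotonicity inequality played off against the corresponding bullet of (H4) --- your final chain of inequalities is exactly the one in the paper. The one step that needs repair is your justification for the existence of the second pair. Cyclical monotonicity is a property of pairs lying in the support of the \emph{same} optimal plan $\gamma$, so the variant ``one pair from $\gamma$ and one from $\tilde\gamma$'' is not valid, and a mass-swapping argument between $\gamma$ and an auxiliary partner-matching plan $\tilde\gamma$ likewise requires the pairs being removed to lie in $\spt(\gamma)$; the mere admissibility of $\tilde\gamma$ tells you nothing about $\spt(\gamma)$. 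The paper obtains the second pair directly from the marginal constraints of $\gamma$: since the mass of $f^+$ carried by $\chi_i^+$ equals the mass of $f^-$ carried by $\chi_i^-$ (both equal $TV(g\res\chi_i^+)=TV(g\res\chi_i^-)$ by (H2)), if some mass leaves $\chi_i^+$ for a point of $\Lambda^-\setminus\chi_i^-$, then some mass arriving at $\chi_i^-$ must originate in $\Lambda^+\setminus\chi_i^+$, which yields a pair $(x',y')\in\spt(\gamma)$ with $x'\in\Lambda^+\setminus\chi_i^+$ and $y'\in\chi_i^-$; cyclical monotonicity of $\gamma$ then applies to $(x,y)$ and $(x',y')$ and gives your inequality. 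With that substitution your argument goes through verbatim, and the comparison plan $\tilde\gamma$ (as well as the appeal to Lemma \ref{lem:zawieranie}) is not needed at all.
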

\begin{proof}
Let $R:=[x,y]$ be a transport ray. As $x \in \spt(f^+)$,
then $x \in \chi_i^+,\,\Gamma_i^-$ or $F_i^{++}$, for some $i$.
Suppose that $x \in \chi_i^+$ and $y \notin \chi_i^-$. As $TV(g_{|\chi_i^+})=TV(g_{|\chi_i^-})$, then there exists a transport ray $R^\prime:=[x^\prime,y^\prime]$ with $y^\prime \in \chi_i^-$ and $x^\prime \in \Lambda^+  \backslash \chi_i^+$. In particular, we have $(x,y),\,(x^\prime,y^\prime) \in \spt(\gamma)$, where $\gamma$ is an optimal transport plan for \eqref{Kanto}. Let $\phi$ be a Kantorovich potential between $f^+$ and $f^-$. Then, we have (this is the so-called {\it{cyclical monotonicity property}}):
$$|x-y| + |x^\prime - y^\prime|=\phi(x) - \phi(y) + \phi(x^\prime) - \phi(y^\prime) \leq |x-y^\prime| + |x^\prime - y|. $$
Yet,
$$|x-y^\prime| + |x^\prime - y| \leq d_M(\chi_i^+,\chi_i^-) +  d_M(\Lambda^+  \backslash \chi_i^+, \Lambda^- \backslash \chi_i^-)$$
and 
$$ \text{dist}(\chi_i^+,\Lambda^- \backslash \chi_i^-) +  \text{dist}(\chi_i^-,\Lambda^+  \backslash \chi_i^+) \leq |x-y| + |x^\prime - y^\prime|.$$\\
This contradicts the assumption (H4). The other cases can be treated in a similar way. 
\end{proof}

We also want to study the uniqueness of the solution of \eqref{Beckmann problem}. For this aim, we will prove the uniqueness of the optimal transport plan $\gamma$ in \eqref{Kanto}. More precisely, we have (the proof is essentially based on some arguments used in \cite[Proposition 2.5]{Dweik}):   
\begin{proposition}  \label{Prop. 4.2} 
Under the assumption that \,$\Omega_\pm$ are strictly convex, 
there is a unique optimal transport plan
$\gamma$ for \eqref{Kanto}, between $f^+$ and $f^-$, which will be induced by a transport map $S$, provided that $f^+$ is atomless.
\end{proposition}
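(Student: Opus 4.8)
The plan is to prove uniqueness of the optimal transport plan $\gamma$ for \eqref{Kanto} by establishing that the optimal plan is in fact induced by a transport map, which follows from a general principle once we know two facts: first, that the transport rays have a sufficiently nice structure so that $f^+$-almost every point lies on exactly one transport ray, and second, that we can differentiate the Kantorovich potential $\phi$ along $\partial\Omega_+$ in a meaningful way. More precisely, I would argue that since $f^+$ is atomless, the strict convexity of $\Omega_+$ forces the direction of the transport ray emanating from a point $x \in \spt(f^+)$ to be uniquely determined for $\mathcal{H}^1$-almost every $x$, and hence the target point $y = S(x)$ is unique; this rules out splitting of mass and gives that $\gamma = (\id, S)_\# f^+$.

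First I would recall, following \cite[Proposition 2.5]{Dweik}, the structure of transport rays on a strictly convex domain: by Lemma \ref{transport rays inside the annulus}, every transport ray is a segment $[x,y]$ with either $x \in \chi_i^+$, $y \in \chi_i^-$, or $x \in \Gamma_i^-$, $y \in \Gamma_i^+$, or $x, y \in F_i^+$. In each of these cases exactly one of the two endpoints lies on $\partial\Omega_+$ in a strictly convex arc, while the other endpoint lies on a strictly convex arc as well (either on $\partial\Omega_-$, which is strictly convex, or on $\partial\Omega_+$). The key geometric input is that two distinct transport rays cannot cross at an interior point (this is the standard non-crossing property coming from cyclical monotonicity of $\gamma$, already invoked in the proofs of Propositions \ref{stw:specialconfiguration} and \ref{stw:finitelymanychanges}). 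I would then show that the restriction of $\phi$ to each strictly convex arc $\chi_i^+$ (and similarly to $\Gamma_i^-$, $F_i^{++}$) is differentiable $\mathcal{H}^1$-a.e., because $\phi$ is Lipschitz on the arc and the arc is a Lipschitz curve; at a point of differentiability, the supporting-line / non-crossing argument forces the transport ray direction to be the unique direction compatible with $\nabla_\tau \phi$ and the constraint $|\nabla\phi|\le 1$, so $y = S(x)$ is single-valued.

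Once the optimal $\gamma$ is shown to come from a map on the part of $f^+$ living on arcs where this differentiability argument works, the remaining step is to check that atoms of $f^+$ are the only obstruction: if $f^+$ has no atoms, then the pushforward $S_\# f^+$ is well defined and $\gamma = (\id, S)_\# f^+$ is the unique optimal plan, since any competitor must be supported on the same (graph) set by cyclical monotonicity and must have the same marginals. I expect the main obstacle to be the rigorous handling of the flat parts $F_i^+$: there the segments $[x,y]$ have both endpoints on $\partial\Omega_+$ but possibly lying on a single flat sub-arc where strict convexity gives no help, so one has to use instead the structural hypothesis (the decomposition into $F_i^{++}$ and $F_i^{+-}$ with matching total variations and the visibility condition (H3)) to pin down the pairing of increasing and decreasing portions and argue that the transport ray direction is still uniquely determined $\mathcal{H}^1$-a.e. by the non-crossing property plus the ordering of the mass. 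A secondary technical point is to make sure that the set of $x\in\spt(f^+)$ lying on more than one transport ray is $f^+$-negligible; this again reduces to the fact that such branch points are at most countable (they correspond to the finitely many points of Proposition \ref{stw:specialconfiguration} together with, at worst, a countable set), hence $f^+$-null since $f^+$ is atomless.
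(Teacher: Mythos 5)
Your overall skeleton matches the paper's: show that every optimal plan is concentrated on the graph of a map $S$ determined by the ray structure of a fixed Kantorovich potential, then conclude uniqueness (the paper uses the midpoint trick --- if $\gamma'$ were another optimal plan, $(\gamma+\gamma')/2$ would be optimal yet not induced by a map --- while your ``same graph, same first marginal'' argument is an acceptable variant). The genuine gap is in the central step, namely that $f^+$-almost every point lies on a \emph{unique} transport ray. Your primary route, a.e.\ differentiability of $\phi$ along the arcs $\chi_i^+$, $\Gamma_i^-$, $F_i^{++}$, only yields uniqueness of the ray direction for $\mathcal{H}^1$-a.e.\ $x$. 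But the hypothesis is merely that $f^+$ is atomless: $f^+$ need not be absolutely continuous with respect to $\mathcal{H}^1$ (it could charge an $\mathcal{H}^1$-null Cantor-type subset of the boundary), so ``$\mathcal{H}^1$-a.e.'' does not upgrade to ``$f^+$-a.e.''. Atomlessness only annihilates countable sets, which is exactly why the paper argues via countability of the branch set rather than via $\mathcal{H}^1$-negligibility.

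Your fallback --- that the set $D$ of points lying on several transport rays is at most countable --- is the right statement, but you assert it rather than prove it, and your attribution to Proposition \ref{stw:specialconfiguration} is off target: that proposition concerns the finitely many points of $\spt(f_-^+)\cap\spt(f_-^-)$, not branch points of the ray structure. The missing argument, which is the heart of the paper's proof (following \cite[Proposition 2.5]{Dweik}), is: for each $x\in\Lambda^+\cap D$ the two distinct rays $R_x^+$ and $R_x^-$ emanating from $x$, together with $\partial\Omega$, bound a region $\Delta_x\subset\Omega$ with $|\Delta_x|>0$ (Lemma \ref{transport rays inside the annulus} guarantees that both rays end on the same matching arc, so $\Delta_x$ sits inside the annulus), and the non-crossing of transport rays forces the regions $\{\Delta_x\}_{x\in D}$ to be pairwise disjoint; hence $D$ is at most countable and $f^+(D)=0$. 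With this supplied, and with the observation that off $D$ the unique ray meets $\spt(f^-)$ in exactly one point by strict convexity of $\Omega_\pm$, the rest of your argument goes through.
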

\begin{proof}
Let $\gamma$ be an optimal transport plan between $f^+$ and $f^-$. Let $D$ be the set of points whose belong to several transport rays. Fix $x \in \Lambda^+ \cap D$ and let $R^\pm_x$ be
two different transport rays starting from
$x$. By Lemma \ref{transport rays inside the annulus}, we have, under the assumptions (H1), (H2), (H3) \& (H4), that all the transport rays between $f^+$ and $f^-$ lie inside $\bar{\Omega}$. 
There are three possibilities for $x$: $x\in \chi_i^+,\,\Gamma_i^-$ or $F_i^{++}$. Moreover, thanks to Lemma \ref{transport rays inside the annulus}, if $x \in \chi_i^+$, then both transport rays $R_x^+$ and $R_x^-$ should end on $\chi_i^-$, while if $x \in \Gamma_i^-$, then both transport rays $R_x^+$ and $R_x^-$ should end on $\Gamma_i^+$ and if $x \in F_i^{++}$, then both transport rays $R_x^+$ and $R_x^-$ should end on $F_i^{+-}$.  Let $∆_x \subset \Omega$ be the region delimited by
$R_x^+$, $R_x^-$
and $\partial\Omega$.
Then, we see easily that the sets $\{\Delta_x\}_{x \in D}$ must be disjoint with $|\Delta_x| >0$, for every $x \in D$. This implies that the set $D$ is at most countable. Yet, $f^+$ is atomless and so, $f^+(D)=0$. In addition, taking into account that $\Omega_\pm$ are strictly convex, we have that, for $f^+-$almost every
$x \,\notin D$,
there is a unique transport ray $R_x$ starting from
$x$, and
this ray $R_x$
intersects $\spt(f^-)$
at exactly one point $S(x)$. This implies that $\gamma=(Id,S)_{\#}f^+$. Yet, this is sufficient to infer that $\gamma$ is the unique optimal transport plan for \eqref{Kanto} since, if $\gamma^\prime$ is another optimal transport plan then \,$\gamma^{\prime\prime}=(\gamma + \gamma^\prime)/2$\, is also optimal for \eqref{Kanto}, which is not possible as $\gamma^{\prime\prime}$ must be induced by a transport map.   $\qedhere$
\end{proof}

Now, we are ready to state our main result concerning the Beckmann problem.
\begin{theorem}\label{thm:existence}
Assume that $\Omega \subset \mathbb{R}^2$ is an annulus. Let $f = \partial_\tau g$, where $g$ satisfies the admissibility conditions (H1)-(H4).
Then the Beckmann problem \eqref{Beckmann problem} admits a solution $v \in \mathcal{M}(\bar{\Omega}, \mathbb{R}^2)$ and $|v|(\partial\Omega) = 0$. Moreover, if $f^+$ is atomless, then the solution is unique.  
\end{theorem}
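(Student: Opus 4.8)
The plan is to route everything through the Monge--Kantorovich problem \eqref{Kanto}, using the two facts already available under the admissibility hypotheses: Lemma \ref{transport rays inside the annulus}, which confines all transport rays to $\Omega$, and Proposition \ref{Prop. 3.3}, which converts an optimal transport plan into an optimal Beckmann flow.

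First I would record that \eqref{Kanto} is feasible: the mass balance $f_\pm(\partial\Omega_\pm) = 0$ forces $f^+$ and $f^-$ to have the same total mass, so there is at least one plan with marginals $f^+$ and $f^-$. Since $(x,y)\mapsto|x-y|$ is continuous and bounded on the compact set $\bar\Omega\times\bar\Omega$ and the set of such plans is weak-$*$ compact, the direct method produces an optimal plan $\gamma$ for \eqref{Kanto}. By Lemma \ref{transport rays inside the annulus}, every transport ray between $f^+$ and $f^-$ lies inside $\Omega$, so Proposition \ref{Prop. 3.3} applies and gives that $v := w_\gamma \in \mathcal{M}(\bar\Omega, \mathbb{R}^2)$ is a solution of the Beckmann problem \eqref{Beckmann problem}, with $\min\eqref{Beckmann problem} = \min\eqref{Kanto}$.

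Next I would check that $|v|(\partial\Omega) = 0$. Recalling the transport density $\sigma_\gamma$ attached to $\gamma$, one has the bound of measures $|w_\gamma| \le \sigma_\gamma$, so it suffices to show $\sigma_\gamma(\partial\Omega) = 0$. Now
\begin{equation*}
\sigma_\gamma(\partial\Omega) = \int_{\bar\Omega \times \bar\Omega} \Big( \int_0^1 \ind_{\partial\Omega}\big((1-t)x + ty\big)\, \dd t \Big) |x-y| \, \dd\gamma(x,y),
\end{equation*}
and for each fixed $(x,y)$ with $x \neq y$ the line through $x$ and $y$ meets each of the strictly convex curves $\partial\Omega_+$ and $\partial\Omega_-$ in at most two points; hence $\{ t \in [0,1] : (1-t)x + ty \in \partial\Omega \}$ is finite, so the inner integral vanishes, while the pairs with $x = y$ are annihilated by the factor $|x-y|$. (Equivalently, Lemma \ref{transport rays inside the annulus} tells us the relative interior of each transport ray sits in the open annulus $\Omega$, hence avoids $\partial\Omega$.) Therefore $\sigma_\gamma(\partial\Omega) = 0$ and $|v|(\partial\Omega) = 0$.

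For the uniqueness assertion I would invoke Proposition \ref{Prop. 4.2}: the sets $\Omega_\pm$ are strictly convex by the definition of an annulus, so if $f^+$ is atomless then the optimal plan $\gamma$ for \eqref{Kanto} is unique. On the other hand, as recorded after Proposition \ref{Prop. 3.3}, every minimizer $w$ of \eqref{Beckmann problem} is of the form $w = w_{\gamma'}$ for some optimal plan $\gamma'$ of \eqref{Kanto}; uniqueness of $\gamma$ then forces $w = w_\gamma = v$, proving uniqueness. The genuinely delicate input to this whole argument --- that under (H1)--(H4) no transport ray ever leaves $\bar\Omega$ --- is exactly Lemma \ref{transport rays inside the annulus}, already established, so the theorem itself is short; the one point deserving care specific to this statement is the identity $\sigma_\gamma(\partial\Omega) = 0$, which is precisely what will later permit the passage from \eqref{Beckmann problem} to the least gradient problem via Theorem \ref{thm:equivalence}.
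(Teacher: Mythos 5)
Your proposal is correct and follows essentially the same route as the paper: obtain an optimal plan $\gamma$ for \eqref{Kanto}, use Lemma \ref{transport rays inside the annulus} and Proposition \ref{Prop. 3.3} to produce the Beckmann minimizer $w_\gamma$, kill the boundary mass via strict convexity of $\partial\Omega_\pm$, and get uniqueness from Proposition \ref{Prop. 4.2} together with the fact that every Beckmann minimizer is of the form $w_{\gamma'}$. Your justification that a line meets each strictly convex boundary curve in at most two points is simply a spelled-out version of the paper's one-line appeal to strict convexity, so nothing is missing.
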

\begin{proof}
Let $f$ be the tangential derivative of the boundary datum $g$, i.e. $f=\partial_\tau g$, where $g$ satisfies the admissibility conditions (H1)-(H4). Let $\gamma$ be an optimal transport plan for \eqref{Kanto}. By Lemma \ref{transport rays inside the annulus} and Proposition \ref{Prop. 3.3}, one can construct a minimizer $v_\gamma$ for \eqref{Beckmann problem}. Now, we only need to show that $|v_\gamma|(\partial\Omega)=0$.
Yet, recalling the construction of $v_\gamma$, we have
$$
|v_\gamma|(\partial\Omega)=\int_{\bar{\Omega} \times \bar{\Omega}} \mathcal{H}^1(\partial\Omega \cap \left[x,y\right])\,\mathrm{d}\gamma(x,y).$$
As \,$\Omega_\pm$ are strictly convex, we infer that $|v_\gamma|(\partial\Omega)=0$. For uniqueness, it is enough to see that by Proposition \ref{Prop. 4.2}, there is a unique optimal transport plan $\gamma$ for \eqref{Kanto}. But, we recall that every solution $v$ for \eqref{Beckmann problem} is of the form $v=v_\gamma$, for some optimal transport plan $\gamma$. This implies that $v$ is the unique solution for \eqref{Beckmann problem}.  $\qedhere$
\end{proof}

Now, we want to go back to the least gradient problem. The first step is to construct a solution to the auxiliary problem \eqref{shifted BV least gradient problem} and translate it to a solution of the usual least gradient problem \eqref{BV least gradient problem classical} for some fixed boundary data $g$.

\begin{theorem}
Suppose that $\Omega \subset \mathbb{R}^2$ is an annulus and that $f = \partial_\tau g$, where $g$ satisfies the admissibility conditions (H1)-(H4). Then there exists a solution to problem \eqref{shifted BV least gradient problem}. Moreover, there exists $\widetilde{g} \in BV(\partial\Omega)$ such that $f = \partial_\tau \widetilde{g}$ such that there exists a solution to the least gradient problem \eqref{shifted BV least gradient problem original} with boundary data $\widetilde{g}$. If $g \in C(\partial\Omega)$, then the solutions to both problems are unique.
\end{theorem}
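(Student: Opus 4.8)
The plan is to bootstrap from the existence and equivalence results of Sections~\ref{Sec. 3} and~\ref{Sec. 4}. First I would invoke Theorem~\ref{thm:existence}: under the admissibility conditions (H1)--(H4) it provides a minimizer $v \in \mathcal{M}(\bar\Omega,\mathbb{R}^2)$ of the Beckmann problem \eqref{Beckmann problem} with the crucial property $|v|(\partial\Omega)=0$. Because this mass condition holds, the last part of Theorem~\ref{thm:equivalence} (the ``Beckmann $\Rightarrow$ shifted least gradient'' direction) applies, and from $v$ one constructs a function $u \in BV(\Omega)$ solving the auxiliary problem \eqref{shifted BV least gradient problem}. This settles the first assertion.

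For the second assertion I would take $\widetilde g := Tu$. Proposition~\ref{prop:measurerecovery} (equivalently Proposition~\ref{prop:admissibility}) yields $\partial_\tau \widetilde g = f$, so $\widetilde g \in BV(\partial\Omega)$ is an admissible choice. To see that $u$ also minimizes the genuine least gradient problem \eqref{shifted BV least gradient problem original} with datum $\widetilde g$, I would note that its feasible set $\{u' \in BV(\Omega): Tu' = \widetilde g\}$ is contained in the feasible set $\{u' \in BV(\Omega): \partial_\tau(Tu') = f\}$ of \eqref{shifted BV least gradient problem}; hence the value of \eqref{shifted BV least gradient problem original} with datum $\widetilde g$ is $\geq \inf \eqref{shifted BV least gradient problem} = \int_\Omega |Du|$, and since $u$ is itself feasible for \eqref{shifted BV least gradient problem original} with datum $\widetilde g$ the two values coincide and $u$ attains the infimum. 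In particular, for this specific $\widetilde g$ the two problems have the same value, a fact I reuse below.

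For uniqueness when $g \in C(\partial\Omega)$, the key first step is that a continuous $BV$ function on $\partial\Omega$ has no jump points, so $f = \partial_\tau g$, and hence its positive part $f^+$, carries no atoms; Theorem~\ref{thm:existence} then upgrades existence to uniqueness of the Beckmann minimizer $v$. Now let $u'$ be an arbitrary minimizer of \eqref{shifted BV least gradient problem}. By Proposition~\ref{prop:admissibility}, $v' := R_{\frac{\pi}{2}}Du'$ is feasible in \eqref{Beckmann problem}, it satisfies $|v'|(\partial\Omega)=0$ because $Du'$ is a measure on the open set $\Omega$, and $\int_{\bar\Omega}|v'| = \int_\Omega |Du'| = \inf \eqref{shifted BV least gradient problem} = \inf \eqref{Beckmann problem}$; thus $v'$ is a Beckmann minimizer, so $v' = v$, which forces $Du' = Du$, and connectedness of $\Omega$ gives $u' = u + c$ for a constant $c$. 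So \eqref{shifted BV least gradient problem} has a minimizer that is unique up to an additive constant. Finally, if $u_1, u_2$ both solve \eqref{shifted BV least gradient problem original} with datum $\widetilde g$, then by the value identity of the previous paragraph they are both minimizers of \eqref{shifted BV least gradient problem}, hence $u_1 = u_2 + c$; the constraint $Tu_1 = Tu_2 = \widetilde g$ forces $c=0$, so the solution of \eqref{shifted BV least gradient problem original} with datum $\widetilde g$ is genuinely unique.

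I expect this to be routine bookkeeping once Theorems~\ref{thm:equivalence} and~\ref{thm:existence} are available. The two places needing a little care are checking that the flow recovered from an arbitrary least gradient minimizer gives no mass to $\partial\Omega$, so that the equivalence may be run in reverse and uniqueness transfers, and the passage from continuity of $g$ to atomlessness of $f^+$, which is exactly the hypothesis that unlocks uniqueness of the Beckmann minimizer. The conceptually subtle points — that on the non-simply-connected annulus one must argue through the shifted problem \eqref{shifted BV least gradient problem} rather than through \eqref{BV least gradient problem classical} directly, and that \eqref{shifted BV least gradient problem original} may fail to admit a solution for the wrong choice of additive constants on the two boundary components — have already been handled in the set-up, so no new obstacle arises here.
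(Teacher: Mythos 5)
Your proposal is correct and follows essentially the same route as the paper: invoke Theorem~\ref{thm:existence} to get a Beckmann minimizer with $|v|(\partial\Omega)=0$, pass to the shifted problem \eqref{shifted BV least gradient problem} via Theorem~\ref{thm:equivalence}, set $\widetilde g = Tu$, and use the inclusion of feasible sets to conclude that $u$ solves \eqref{shifted BV least gradient problem original}. Your uniqueness argument is in fact more explicit than the paper's one-line parenthetical, and you correctly flag the point the paper glosses over — that the shifted problem is only unique up to an additive constant, with genuine uniqueness recovered for \eqref{shifted BV least gradient problem original} by the trace constraint.
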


\begin{proof}
By Theorem \ref{thm:existence}, there exists a solution $v \in \mathcal{M}(\bar{\Omega}, \mathbb{R}^2)$ to the Beckmann problem \eqref{Beckmann problem} with $|v|(\partial\Omega) = 0$ (in addition, this solution is unique as soon as $f^+$ is atomless). Then, by Theorem \ref{thm:equivalence}, there exists a function $u \in BV(\Omega)$ which is a solution of the auxiliary problem \eqref{shifted BV least gradient problem} (which is also unique if $g$ is continuous, as then $f$ is atomless). Let $\widetilde{g} = Tu$. As the infimum in \eqref{shifted BV least gradient problem} is taken with respect to all possible traces with tangential derivative $f$, i.e. functions of the form $\widetilde{g} + \lambda_- \chi_{\partial\Omega_-} + \lambda_+ \chi_{\partial\Omega_+}$, so in particular $u$ is a solution to the least gradient problem \eqref{shifted BV least gradient problem original} with boundary data $\widetilde{g}$.  $\qedhere$
\end{proof}

In other words, what happens in the above Theorem is that when we use Theorem \ref{thm:equivalence}, we have no control on the vertical shifts of the boundary data by a constant on each connected component of $\partial\Omega$. Therefore, we are able to prove existence of a solution to the least gradient problem with some boundary data $\widetilde{g}$ (which differs from the original function $g$ by a constant on each connected component of $\partial\Omega$), but without calculating directly the minimizer $u$ in the auxiliary problem \eqref{shifted BV least gradient problem} it may be hard to compute $\widetilde{g}$. However, under an additional constraint on the total variation, the following proposition enables us to identify the boundary data $\widetilde{g}$ given by the previous theorem without having to first calculate the solution $u$ of problem \eqref{shifted BV least gradient problem}.
\begin{proposition}\label{prop:backtoleastgradient}
Suppose that $f$ satisfies assumptions (H1)-(H4) and $|f_-|(\partial\Omega_-) = |f_+|(\partial\Omega^+)$. Then, there is a unique $g \in BV(\partial\Omega)$ such that $f = \partial_\tau g$ and there exists a solution to the least gradient problem with this boundary data $g$. Moreover, if \,$f$ is atomless then the solution is unique.
\end{proposition}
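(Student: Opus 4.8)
The plan is to build on the preceding theorem, which already supplies a solution $u \in BV(\Omega)$ of \eqref{shifted BV least gradient problem} together with a datum $\widetilde{g} \in BV(\partial\Omega)$, $\partial_\tau \widetilde{g} = f$, for which the least gradient problem \eqref{shifted BV least gradient problem original} is solvable; the remaining task is to pin $\widetilde{g}$ down --- i.e.\ to fix the relative vertical shift between $\widetilde{g}|_{\partial\Omega_+}$ and $\widetilde{g}|_{\partial\Omega_-}$ --- and to check that no other datum works. First I would unpack the hypothesis: from the mass-balance conditions $f_\pm(\partial\Omega_\pm) = 0$ one has $|f_\pm^+|(\partial\Omega_\pm) = |f_\pm^-|(\partial\Omega_\pm) = \frac{1}{2} TV(g_\pm)$, so $|f_-|(\partial\Omega_-) = |f_+|(\partial\Omega_+)$ means precisely $TV(g_-) = TV(g_+)$. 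Combining this with (H2) --- which gives $TV(g \res \chi_i^+) = TV(g \res \chi_i^-)$, $TV(g \res \Gamma_i^+) = TV(g \res \Gamma_i^-)$, $[\partial_\tau g_+](F_i^+) = 0$, and $g_-$ constant on $F_i^-$ --- one computes $TV(g_+) - TV(g_-) = \sum_i TV(g \res F_i^+) = 2 \sum_i TV(g_{|F_i^{++}})$, so the equality forces $TV(g_{|F_i^{++}}) = 0$: every $F_i^+$ is a genuine flat part. By Lemma \ref{transport rays inside the annulus}, every transport ray between $f^+$ and $f^-$ then joins $\partial\Omega_+$ to $\partial\Omega_-$, being a segment $[x,y]$ with $x \in \chi_i^+$, $y \in \chi_i^-$ or $x \in \Gamma_i^-$, $y \in \Gamma_i^+$, and these rays exhaust $\spt(f_+)$ and $\spt(f_-)$.

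Next I would identify $\widetilde{g}$. By Theorem \ref{thm:existence}, the field $v = R_{\frac{\pi}{2}} D u$ attached to $u$ is a minimizer of \eqref{Beckmann problem} with $|v|(\partial\Omega) = 0$, of the form $v = w_\gamma$ for some optimal plan $\gamma$ of \eqref{Kanto}; since $|Du|(\partial\Omega) = |v|(\partial\Omega) = 0$, the trace $\widetilde{g} = Tu$ satisfies $\partial_\tau \widetilde{g} = f$. Working with the precise representative of $u$, Theorem \ref{twierdzeniezbgg} and the two-dimensional structure of minimal sets give that $\partial\{u \geq t\}$ is a locally finite union of line segments; by the equivalence with optimal transport (Theorem \ref{thm:equivalence}, Proposition \ref{Prop. 3.3}, and the fact that every Beckmann minimizer is some $w_\gamma$) these segments are exactly the transport rays of $\gamma$, and $u$ is constant equal to $t$ along each. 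Lemma \ref{lem:zawieranie} then yields $\widetilde{g}_+(x) = t$ for the endpoint $x \in \partial\Omega_+$ of a ray $[x,y] \subset \partial\{u \geq t\}$, and an analogous blow-up of the strictly convex curve $\partial\Omega_-$ at the other endpoint $y$ (using $|Du|(\partial\Omega) = 0$, so that the trace is genuinely attained along the ray) yields $\widetilde{g}_-(y) = t$. Hence $\widetilde{g}_+(x) = \widetilde{g}_-(y)$ for every $(x,y) \in \spt(\gamma)$, and since the rays connect $\chi_i^+$ monotonically to $\chi_i^-$ (and $\Gamma_i^-$ to $\Gamma_i^+$) and exhaust the supports, the value of $\widetilde{g}$ on each piece of $\partial\Omega_-$ is determined by its value on the matched piece of $\partial\Omega_+$, together with the (constant) values on the flat parts $F_i^-$. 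This both proves uniqueness of $g$ --- any admissible datum differs from $\widetilde{g}$ by $\lambda_- \chi_{\partial\Omega_-} + \lambda_+ \chi_{\partial\Omega_+}$, and the compatibility $\widetilde{g}_+(x) = \widetilde{g}_-(y)$ along the rays, which any least gradient solution for that datum would have to satisfy, forces $\lambda_+ = \lambda_-$ whenever $f \neq 0$ (the case $f = 0$ being the example after Theorem \ref{thm:equivalence}) --- and gives the recipe: fix $g_+$ with $\partial_\tau g_+ = f_+$ up to a constant, transport its values along the rays to get $g_-$ on $\spt(f_-)$, and fill in the flat parts $F_i^-$ by constants.

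For the last assertion, suppose $f$ is atomless (equivalently $g \in C(\partial\Omega)$). Then $f^+$ is atomless, so by Proposition \ref{Prop. 4.2} the optimal plan $\gamma$ is unique; since every minimizer of \eqref{Beckmann problem} is some $w_\gamma$, the minimizer $v$ of \eqref{Beckmann problem} is unique. Moreover $\min\eqref{shifted BV least gradient problem original} = \inf\eqref{shifted BV least gradient problem}$ for the datum $g = \widetilde{g}$: ``$\geq$'' because any competitor of \eqref{shifted BV least gradient problem original} with trace $g$ is a competitor of \eqref{shifted BV least gradient problem}, and ``$\leq$'' because $u$ realizes $\inf\eqref{shifted BV least gradient problem}$ and has trace $\widetilde{g}$. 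Hence every solution $\widehat{u}$ of \eqref{shifted BV least gradient problem original} also solves \eqref{shifted BV least gradient problem}, so $R_{\frac{\pi}{2}} D\widehat{u}$ is a minimizer of \eqref{Beckmann problem}, hence equals $v = R_{\frac{\pi}{2}} Du$; thus $D\widehat{u} = Du$, so $\widehat{u} = u$ up to a constant, which is fixed by $T\widehat{u} = Tu$. This gives uniqueness of the solution.

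The main obstacle lies in the second step. First, Lemma \ref{lem:zawieranie} takes care of the outer boundary $\partial\Omega_+$, but there is no off-the-shelf analogue at $\partial\Omega_-$, and since $g$ is merely $BV$ (not continuous) one must argue through the precise representative, the transport rays accumulating at the inner endpoint, and the vanishing of $|Du|$ on $\partial\Omega$. Second --- and this is where the total-variation equality is truly used --- it must be checked that the superlevel sets of any least gradient solution for a competitor datum are compatible with the transport structure (so that their boundary segments are transport rays of $\gamma$); the equality is what guarantees that the rays reach every point of $\spt(f_-)$, pinning $\widetilde{g}_-$ down everywhere rather than only up to an unconstrained shift. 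Without it, $g_-$ could in general be translated while keeping the least gradient problem solvable, and uniqueness of $g$ would fail.
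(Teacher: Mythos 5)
Your opening reduction is correct and makes explicit something the paper leaves implicit: the mass equality $|f_-|(\partial\Omega_-)=|f_+|(\partial\Omega_+)$ together with (H2) forces $TV(g|_{F_i^{++}})=0$, so every $F_i^+$ is genuinely flat and every transport ray joins $\partial\Omega_+$ to $\partial\Omega_-$. Your closing uniqueness-of-$u$ argument (via $\min\eqref{shifted BV least gradient problem original}=\inf\eqref{shifted BV least gradient problem}$ and uniqueness of the Beckmann minimizer) is also sound. The problem is the middle step, and you have partly named it yourself: both your identification of $\widetilde g$ and your proof that no other shift works rest on claims you do not establish. First, you need $\widetilde g_-(y)=t$ at the inner endpoint of a ray $[x,y]\subset\partial\{u\geq t\}$; the paper has no analogue of Lemma \ref{lem:zawieranie} on $\partial\Omega_-$, and the blow-up there is genuinely different (a segment reaching $\partial\Omega_-$ may be tangent to it, which is exactly the degenerate configuration of Proposition \ref{stw:specialconfiguration}), so ``an analogous blow-up'' is not a proof. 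Second, for uniqueness of $g$ you need that the level-set boundaries of an arbitrary least gradient solution for a \emph{competitor} datum $g+\lambda\chi_{\partial\Omega_-}$ are transport rays of the optimal plan $\gamma$; such a solution is only known to minimize among functions with that fixed trace, not to solve \eqref{shifted BV least gradient problem}, so its rotated gradient need not be an optimal Beckmann flow and its level-set boundaries need not be rays of $\gamma$. As written, both halves of the conclusion about $g$ remain unproven.

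The paper's proof sidesteps both difficulties. It defines $g_\pm$ explicitly as $t_\pm\mapsto\int_{x_\pm}^{t_\pm}f$ from the matched base points $x_\pm=\chi_1^\pm\cap F_1^\pm$, so that by (H2) and the mass equality the two restrictions run over the same range arc by arc; this makes $g$ the unique antiderivative of $f$ satisfying $g_-(\partial\Omega_-)\subset g_+(\partial\Omega_+)$. It then invokes Lemma \ref{lem:images}, which holds for \emph{any} least gradient function (its proof uses only the outer-boundary Lemma \ref{lem:zawieranie} and the fact that no segment of $\partial\{u\geq t\}$ has both ends on $\partial\Omega_-$), to conclude that any datum admitting a solution must satisfy this inclusion, hence equals $g$, and in particular that $Tu=g$ for the solution produced by Theorems \ref{thm:equivalence} and \ref{thm:existence}. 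If you want to salvage your route you would have to actually prove the inner-boundary trace identification along rays; it is much simpler to replace that entire step by the image-inclusion argument.
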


\begin{proof}
Fix $x_\pm = \chi_1^\pm \cap F_1^\pm$. For $t_\pm \in \partial\Omega_\pm$, we set $g_\pm(t_\pm) = \int_{x_\pm}^{t_\pm} f$ (the integral is taken so that the tangent vector moves counterclockwise). Then, $f = \partial_\tau g$ and, by assumption (H2) and the equality of masses, $g_\pm$ change by the same value on each $\chi_i^\pm$, $\Gamma_i^\pm$ and is constant on each $F_i^\pm$. Then $g$ is the only function with tangential derivative $f$ such that $g_-(\partial\Omega_-) \subset g_+(\partial\Omega_+)$. Now, Theorems \ref{thm:equivalence} and \ref{thm:existence} give us existence of a least gradient function $u \in BV(\Omega)$ which solves \eqref{shifted BV least gradient problem}. However, due to Lemma \ref{lem:zawieranie} traces of least gradient functions satisfy $g_-(\partial\Omega_-) \subset g_+(\partial\Omega_+)$; hence $Tu = g$. Uniqueness is guaranteed by Theorem \ref{thm:existence}.  $\qedhere$
\end{proof}

Finally, we illustrate the results in this Section with the following Example:

\begin{example}
Let $\Omega = B(0,2) \backslash \overline{B(0,1)}$. Let $g_- \in C(\partial\Omega_-) \cap BV(\partial\Omega_-)$ be defined as follows:
$$ g_-(x,y) = \begin{cases} 
1 & \,\,\mbox{if}\,\,\,\, \,y > \frac{1}{2},\\
0 & \,\,\mbox{if}\,\,\,\, y < - \frac{1}{2},\\
y +\frac{1}{2}  &\,\, \mbox{else}. \end{cases}$$
Similarly, let $g_+ \in C(\partial\Omega_+) \cap BV(\partial\Omega_+)$ be defined as follows:
$$ g_+(x,y) = \begin{cases} 
1 & \,\,\mbox{if}\,\,\,\, \,y > 1,\\
0 & \,\,\mbox{if}\,\,\,\, y < - 1,\\
\frac{1}{2}(y+1) &\,\, \mbox{else}. \end{cases}$$
We check the admissibility conditions (H1)-(H4). By definition, $g \in BV(\partial\Omega)$. Moreover, we can decompose $\partial\Omega$ as in (H2): in the notation introduced at the beginning of Section 4, let us call the arc where $g_\pm$ is increasing  $\chi^\pm$ and the arc where $g_\pm$ is  decreasing $\Gamma^\pm$ (we drop the index $i$ as there is only one such arc). We call the remaining arcs, on which $g$ is constant, $F_1^\pm$ (with $g \equiv 1$ on $F_1^\pm$) and $F_2^\pm$ (with $g \equiv 0$ on $F_2^\pm$). In addition, $TV(g \res \chi^-) = 1 = TV(g \res \chi^+)$ and $TV(g \res \Gamma^-) = 1 = TV(g \res \Gamma^+).$ The situation is presented on Figure \ref{fig:prototype}.

As for the visibility condition (H3), we check that the tangent line to the inner circle $\partial B(0,1)$ at $(\frac{\sqrt{3}}{2}, \pm \frac{1}{2})$ crosses the outer circle $\partial B(0,2)$ at $(\sqrt{3}, \mp 1)$, hence $\chi^-$ is visible from $\chi^+$; similarly, $\Gamma^-$ is visible from $\Gamma^+$. 

Finally, we look at condition (H4). The idea behind it is such that the transport should take place between $\chi^\pm$ and between $\Gamma^\pm$, so that transport rays lie inside $\Omega$. Now, fix four points which are ends of two transport rays (of which we may think as points in the preimage $g^{-1}(t)$) $p_\pm \in \chi^\pm$ and $q_\pm \in \Gamma^\pm$. The visibility conditions enforce that the transport rays between these points are $p_- p_+$ and $q_- q_+$; we have to make sure that it is in fact the shortest connection possible between these four points. In other words, we have to check that
$$ |p_- - p_+| + |q_- - q_+| < |p_- - q_-| + |p_+ - q_+|,$$
as we can exclude the connection between points in $\chi^-$ and \,$\Gamma^+$, because both sets lie in the support of $f^-$. First, we see that
$$ |p_- - p_+| + |q_- - q_+| \leq d_M(\chi^+, \chi^-) + d_M(\Gamma^+, \Gamma^-)$$
and
$$ \text{dist}(\chi^-, \Gamma^-) + \text{dist}(\chi^+, \Gamma^+) \leq |p_- - q_-| + |p_+ - q_+|.$$
\\
Yet, we have $d_M(\chi^+, \chi^-) = d_M(\Gamma^+, \Gamma^-) = \text{ dist}(\chi^-, \Gamma^-) = \sqrt{3}$ and $\text{dist}(\chi^+, \Gamma^+) = 2 \sqrt{3}$. Hence, 
$$ d_M(\chi^+, \chi^-) + d_M(\Gamma^+, \Gamma^-) = 2\sqrt{3} < 3\sqrt{3} = \text{dist}(\chi^-, \Gamma^-) + \text{dist}(\chi^+, \Gamma^+) $$
and (H4) holds. Hence, by Theorem \ref{thm:existence} there exists a solution to the Beckmann problem with boundary data $f = \partial_\tau g$ and, by Proposition \ref{prop:backtoleastgradient}, there exists a unique solution to the least gradient problem with boundary data $g$.

\begin{figure}[h]
    \includegraphics[scale=0.17]{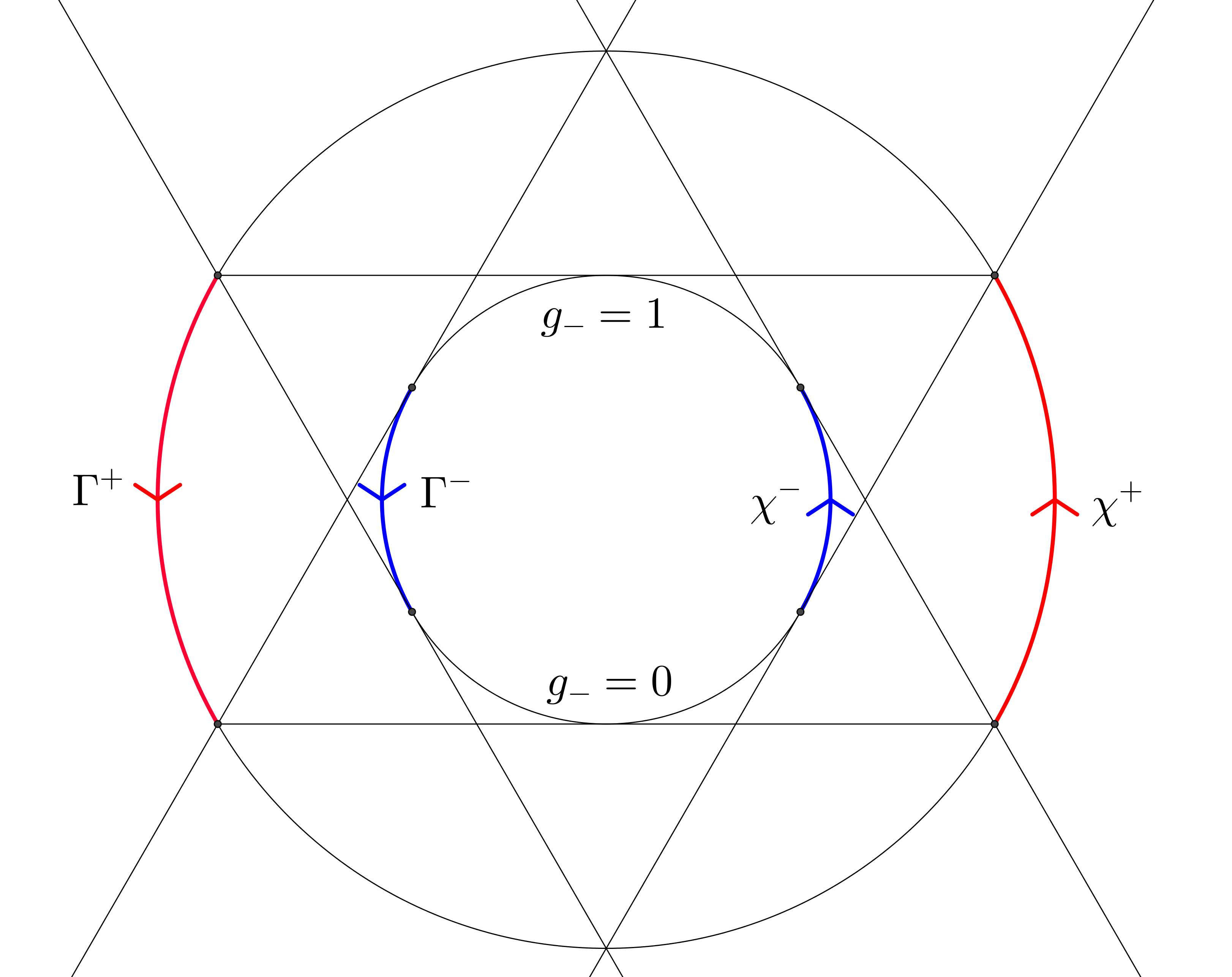}
    \caption{Visibility conditions in practice}
    \label{fig:prototype}
\end{figure}

\end{example}

\section{$W^{1,p}$ regularity of the solution to the least gradient problem}\label{Sec. 5}
The aim of this section is to study the $W^{1,p}$ regularity of the solution $u$ of the least gradient problem \eqref{shifted BV least gradient problem} in the case where the domain $\Omega$ is an annulus. First, we note that this question has already considered in \cite{Dweik}, but in what concerns the case where the domain $\Omega$ is uniformly convex, where the authors proved the following statement:
$$ g \in W^{1,p}(\partial\Omega) \Rightarrow u \in W^{1,p} (\Omega),\,\,\mbox{for every}\,\,\,p \leq 2.$$ 
In addition, they introduce a counter-example to the $W^{1,p}$ regularity of \,$u$\, for $p>2$. More precisely, it is possible to construct a Lipschitz function $g$ on the boundary so that the corresponding solution $u$ of the BV least gradient problem is not in $W^{1,2 +\varepsilon}(\Omega)$, for every $\varepsilon >0$. Recalling the relationship between the solution $u$ of the BV least gradient problem \eqref{shifted BV least gradient problem} and the minimizer $v$ of the Beckmann problem \eqref{Beckmann problem}, that is $v=R_{\frac{\pi}{2}} D u$, we see that studying the $W^{1,p}$ regularity of $u$ is equivalent to study the $L^p$ summability of the transport density \,$\sigma=|v|$. The difficulty, here, is that the measures $f^+$ and $f^-$ are concentrated on the boundary $\partial\Omega$ (and so, they are singular). As a consequence of that, we cannot use, for instance, the results of \cite{DePas1,DePas2,DePas3,San09} about the $L^p$ summability of the transport density between two $L^p$ densities $f^+$ and $f^-$ on $\Omega$. Moreover, the authors of \cite{DweSan} have considered the case where the source measure $f^+ \in L^p(\Omega)$ while the target one $f^-$ is the projection of $f^+$ into the boundary. In this case, they show that the transport density $\sigma$ is in $L^p(\Omega)$ provided $f^+ \in L^p(\Omega)$ and $\Omega$ satisfies an exterior ball condition; unfortunately, this is not the situation here. 
However, under the assumption that the domain $\Omega$ is uniformly convex, the authors of \cite{Dweik} show that the transport density $\sigma$ should be in $L^p(\Omega)$ as soon as $f^\pm \in L^p(\partial\Omega)$ with $p \leq 2$. Yet, the problem now is that our domain is an annulus and so, we cannot use the results of \cite{Dweik} to obtain $L^p$ summability on the transport density $\sigma$ (or equivalently, $W^{1,p}$ regularity for the solution $u$ of \eqref{shifted BV least gradient problem original}). So, we want to study the $L^p$ summability of the transport density in the case where the domain is an annulus. First of all, let us assume the following:
$$\mbox{(H5)}\,\,\,\,\,\exists\,  c>0\,\,\,\mbox{s.t.}\,\,\,\forall\,\,x \in \chi_i^-\,(\mbox{resp.}\,\,\Gamma_i^-),\,y \in \chi_i^+\,(\mbox{resp.}\,\,\Gamma_i^+),\,\mbox{we have}\,\,(y-x) \cdot \nu(x) \geq c,$$
where $\nu(x)$ is the outward normal to $\partial\Omega_-$ at $x$. Under (H5), we will show that if $F_i^+$ is a flat part (i.e. $g_+$ is constant on $F_i^+$), then the following statement holds:  
$$ g \in W^{1,p}(\partial\Omega) \Rightarrow u \in W^{1,p} (\Omega),\,\,\mbox{for every}\,\,\,p \in [1,\infty].$$ 
On the other hand, the $W^{1,p}$ estimates, for $p >2$, on the solution $u$ of \eqref{shifted BV least gradient problem} fail to be true as soon as the part $F_i^+$ is not flat. In this case, one can only prove (as in \cite{Dweik}) the following:
$$ g \in W^{1,p}(\partial\Omega) \Rightarrow u \in W^{1,p} (\Omega),\,\,\mbox{for every}\,\,\,p \leq 2.$$ 
In all that follows, $\Omega$ will be an annulus in the sense of Definition \ref{dfn:annulus}. Assume there exists a unique optimal transport plan $\gamma$ between $f^+$ and $f^-$ (for instance the one given by Theorem \ref{thm:existence}). In order to study the $L^p$ summability of the transport density $\sigma$ in the case where the domain $\Omega$ is an annulus, we will use a similar technique as in the proof of \cite[Proposition 3.1]{Dweik}. The main result is the following Theorem:
\begin{theorem}
Under (H5) and the assumption that $F_i^+$ is a flat part for each $i$, the transport density $\sigma$ belongs to $L^p(\Omega)$ as soon as $f \in L^p(\partial\Omega)$, for all $p \in [1,\infty]$. Moreover, if there is some $i$ such that $F_i^+$ is not a flat part, then the same result holds for every $p \leq 2$ as soon as the exterior domain $\Omega_+$ is uniformly convex.  
\end{theorem}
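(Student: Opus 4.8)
The plan is to pass to the optimal transport side and estimate the transport density ray by ray. Since $g \in W^{1,p}(\partial\Omega)$ with $p \ge 1$, the datum $f = \partial_\tau g$ is absolutely continuous with respect to $\mathcal H^1 \res \partial\Omega$, hence atomless; by Proposition \ref{Prop. 4.2} the (unique) optimal transport plan between $f^+$ and $f^-$ is induced by a transport map $S$, i.e. $\gamma = (\id, S)_\# f^+$, and the transport density is
\[
\langle \sigma, \varphi \rangle = \int_{\spt(f^+)} \int_0^1 \varphi\big((1-t)x + tS(x)\big)\, |x - S(x)|\; \dd t\, \dd f^+(x), \qquad \varphi \in C(\bar\Omega).
\]
By Lemma \ref{transport rays inside the annulus} every transport ray is a segment $[x, S(x)]$ with, for some $i$, either $x \in \chi_i^+$ and $S(x) \in \chi_i^-$, or $x \in \Gamma_i^-$ and $S(x) \in \Gamma_i^+$, or $x \in F_i^{++}$ and $S(x) \in F_i^{+-}$. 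Restricting $f^+$ to each of the finitely many arcs produces a decomposition $\sigma = \sum_i (\sigma_i^\chi + \sigma_i^\Gamma + \sigma_i^F)$ into transport densities carried by the (pairwise essentially disjoint) regions swept by the rays of each type, all contained in $\bar\Omega$ by the visibility condition (H3); it suffices to bound each summand in $L^p(\Omega)$.

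Consider first $\sigma_i^\chi$ (the term $\sigma_i^\Gamma$ being identical after exchanging the inner and outer boundary). Parametrise $\chi_i^+$ by arc length $s \mapsto x(s)$, set $y(s) = S(x(s)) \in \chi_i^-$, $\ell(s) = |x(s) - y(s)|$, and write $\alpha(s)\,\dd s$, $\beta(r)\,\dd r$ for the densities of $f^+ \res \chi_i^+$, $f^- \res \chi_i^-$ with respect to arc length ($r$ on $\chi_i^-$); both lie in $L^p$ as restrictions of $f \in L^p(\partial\Omega)$, and the transport relation $S_\#(f^+\res\chi_i^+) = f^-\res\chi_i^-$ gives $\alpha(s) = \beta(r(s))\,|y'(s)|$. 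Let $\Psi(s,t) = (1-t)x(s) + ty(s)$. Since distinct transport rays do not meet at interior points (see the proof of Proposition \ref{Prop. 4.2}), $\Psi$ is essentially injective, and the change of variables $z = \Psi(s,t)$ yields on its image
\[
\sigma_i^\chi(z) = \frac{\ell(s)\,\alpha(s)}{|J(s,t)|}, \qquad J(s,t) = \det(\partial_s\Psi, \partial_t\Psi) = (1-t)\,J_0(s) + t\,J_1(s),
\]
with $J_0(s) = \det(x'(s), y(s) - x(s))$ and $J_1(s) = \det(y'(s), y(s) - x(s))$; as the rays do not cross in the interior, $J_0(s)$ and $J_1(s)$ share a sign, so $|J(s,t)| = (1-t)|J_0(s)| + t|J_1(s)| \ge \min\{|J_0(s)|, |J_1(s)|\}$. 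The key point is that $|J_0(s)| = (x(s) - y(s)) \cdot \nu_+(x(s)) \ge c_0 > 0$ uniformly: indeed $x(s) \in \partial\Omega_+$, $y(s) \in \partial\Omega_- \subset\subset \Omega_+$, $\Omega_+$ is strictly convex, and $\ell(s) \ge \mathrm{dist}(\partial\Omega_-, \partial\Omega_+) > 0$, so a compactness argument applies and the ray cannot be tangent to $\partial\Omega_+$; meanwhile (H5) gives $|J_1(s)| = |y'(s)|\, \big((x(s) - y(s))\cdot\nu(y(s))\big) \ge c\,|y'(s)|$. Hence, for $p \in [1,\infty)$, by the area formula
\[
\|\sigma_i^\chi\|_{L^p(\Omega)}^p = \int \ell(s)^p \alpha(s)^p \Big( \int_0^1 |J(s,t)|^{1-p}\,\dd t \Big) \dd s \le \int \ell(s)^p \alpha(s)^p\, \min\{c_0,\, c\,|y'(s)|\}^{1-p}\, \dd s,
\]
and splitting the $s$-integral according to whether $c|y'(s)| \ge c_0$: the first part is $\le c_0^{1-p}\diam(\Omega)^p \|\alpha\|_{L^p}^p$, while on the second part the integrand equals $c^{1-p}\ell(s)^p\beta(r(s))^p |y'(s)|$ (using $\alpha = \beta|y'|$), which after $\dd r = |y'(s)|\,\dd s$ integrates to $\le c^{1-p}\diam(\Omega)^p\|\beta\|_{L^p}^p$. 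The case $p = \infty$ is the analogous pointwise bound. Thus $\sigma_i^\chi, \sigma_i^\Gamma \in L^p(\Omega)$ for \emph{every} $p \in [1,\infty]$, using only (H5) and the strict convexity of $\Omega_+$.

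It remains to treat $\sigma_i^F$. If every $F_i^+$ is flat, then $f^+$ gives no mass to any $F_i^{++}$, there are no rays of the third type, and $\sigma = \sum_i(\sigma_i^\chi + \sigma_i^\Gamma) \in L^p(\Omega)$ for all $p \in [1,\infty]$, which is the first assertion. If instead some $F_j^+$ is not flat, $\sigma_j^F$ is the transport density of an optimal transport between the arcs $F_j^{++}, F_j^{+-} \subset \partial\Omega_+$, whose rays are chords of $\Omega_+$; these may be arbitrarily short and nearly tangent to $\partial\Omega_+$, so the argument above breaks down, but assuming $\Omega_+$ uniformly convex one repeats the computation of \cite[Proposition 3.1]{Dweik}: uniform convexity yields $|J_0(s)| \gtrsim \ell(s)^2$ with $\ell(s)$ comparable to the arc-length distance from $x(s)$ to the end of the transport bundle, and $\int_0^1|J(s,t)|^{1-p}\,\dd t$ is integrable in $s$ precisely for $p \le 2$, giving $\sigma_j^F \in L^p(\Omega)$ for $p \le 2$. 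Combined with the previous step, $\sigma \in L^p(\Omega)$ for every $p \le 2$. The main obstacle is the first step: making the change of variables $z = \Psi(s,t)$ fully rigorous (coping with the mere monotonicity of $S$ rather than smoothness, and with the null set of points lying on several transport rays, exactly as in \cite{Dweik}) and, above all, the uniform bound $|J_0(s)| \ge c_0 > 0$ at the outer boundary, which is where the hypotheses genuinely enter — it combines strict convexity of $\Omega_+$ with the uniform lower bound $\ell(s) \ge \mathrm{dist}(\partial\Omega_-,\partial\Omega_+)$ enjoyed by the rays of type $\chi_i,\Gamma_i$, and it is exactly the flatness of the $F_i^+$'s that forces all rays to be of this type. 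Once this is in place the $L^p$ bound for all $p$ is the short computation above, and the non-flat case is a direct appeal to the degenerate estimate of \cite{Dweik}.
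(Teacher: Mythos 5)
Your proposal is correct in substance and follows the same overall strategy as the paper: pass to the optimal transport formulation, use Lemma \ref{transport rays inside the annulus} to sort the transport rays into the three families $\chi_i^+\to\chi_i^-$, $\Gamma_i^-\to\Gamma_i^+$, $F_i^{++}\to F_i^{+-}$, disintegrate the transport density along rays, lower-bound the Jacobian of the ray parametrization using (H5) at the inner boundary and convexity at the outer one, and delegate the non-flat $F_i^+$ part to the degenerate boundary-to-boundary estimate of \cite{Dweik} (whence $p\le 2$ and uniform convexity of $\Omega_+$). Where you genuinely diverge is in the execution of the main estimate. The paper first approximates the \emph{target} measure by an atomic one, so that the rays form honest cones $\Omega_{i,j}^\pm$ and the change of variables is elementary; it then splits each ray at its midpoint, $\sigma=\sigma^++\sigma^-$, estimates $\sigma^+$ on $t\in[0,\tfrac12]$ purely from the source end (where the Jacobian is $(1-t)(y-x)\cdot\nu(x)\ge c/2$, so $\int_0^{1/2}(1-t)^{1-p}\,\dd t$ is finite for every $p$), obtains $\|\sigma^+\|_{L^p}\le C\|f^+\|_{L^p}$, and gets $\sigma^-$ by exchanging the roles of $f^+$ and $f^-$; finally it passes to the limit in the atomic approximation. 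You instead treat the whole ray at once, write $J(s,t)=(1-t)J_0(s)+tJ_1(s)$, bound it below by $\min\{|J_0|,|J_1|\}$ (legitimate since non-crossing of rays forces $J_0,J_1$ to share a sign), and transfer the estimate to the target density via the pushforward identity $\alpha=\beta(r(s))|y'(s)|$ when $|J_1|$ is the smaller term. This buys you a single clean computation with no midpoint splitting and an explicit constant, at the price of needing $y(s)=S(x(s))$ to be differentiable a.e.\ \emph{and} the identity $\alpha=\beta(r(s))|r'(s)|$ to hold a.e.; for a merely monotone $S$ the latter requires ruling out a singular part of $s\mapsto r(s)$ (or arguing with the inequality $\int\beta(r(s))^p|r'(s)|\,\dd s\le\|\beta\|_{L^p}^p$ valid for monotone maps, which fortunately is the direction you need). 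You flag this yourself; the paper's atomic approximation is precisely the device that makes this step painless, so if you want a fully rigorous version you should either insert that approximation or spell out the monotone change-of-variables inequality. Two further small remarks: your uniform bound $|J_0(s)|\ge c_0$ at the outer boundary needs only convexity of $\Omega_+$ together with $\mathrm{dist}(\overline{\Omega_-},\partial\Omega_+)>0$ (the supporting-line argument gives $|(z-x)\cdot\nu_+(x)|\ge\mathrm{dist}(z,\partial\Omega_+)$ for $z\in\Omega_+$), which is in fact cleaner than the compactness argument you sketch and also justifies the paper's unstated ``similarly'' for the $\sigma_{i,j}^{+-}$ term; and the statement is about $f\in L^p(\partial\Omega)$, which already gives atomlessness directly, so the detour through $g\in W^{1,p}$ at the start is unnecessary.
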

\begin{proof}
First, let us suppose that, for each $i$, $F_i^+$ is a flat part. Now, assume that the target measure $f^-$ is atomic with $(x_{i,j}^\pm)_{\{1 \leq j \leq n\}}$ being its atoms where, for every $i$, $x_{i,j}^+ \in \Gamma_i^+$ and $x_{i,j}^- \in \chi_i^-$, for all $j \in \{1,...,n\}$. Let us call by $\Omega_{i,j}^+$ the set of points of the form $(1-t)x + t x_{i,j}^+$ with $x \in \Gamma_i^-$ and $\Omega_{i,j}^-$ the set of points of the form $(1-t)x  + t x_{i,j}^-$ with $x \in \chi_i^+$. We recall that all the sets $\Omega_{i,j}^\pm$ remain in $\Omega$ and they are essentially disjoint. Let us decompose the transport density $\sigma$ into two parts $\sigma=\sigma^+ + \sigma^-$, where $\sigma^+$ and $\sigma^-$ are defined as follows: 
$$<\sigma^+,\phi>:=\int_{\bar{\Omega} \times \bar{\Omega}} \int_0^{\frac{1}{2}} \phi((1-t)x+ty) |x-y| \,\,\mathrm{d}t\,\,\mathrm{d}\gamma (x,y),\,\,\,\,\mbox{for all}\,\,\,\,\phi \in C(\bar{\Omega}),$$
and 
$$<\sigma^-,\phi>:=\int_{\bar{\Omega} \times \bar{\Omega}} \int_{\frac{1}{2}}^1 \phi((1-t)x+ty) |x-y| \,\,\mathrm{d}t\,\,\mathrm{d}\gamma (x,y),\,\,\,\,\mbox{for all}\,\,\,\,\phi \in C(\bar{\Omega}).$$ \\ 
Now, we want to give some $L^p$ estimates for $\sigma^+$. Recalling Proposition \ref{Prop. 4.2}, there is a unique optimal transport map $S$ from $f^+$ to $f^-$ and then, we have
$$<\sigma^+,\phi>:=\int_{\bar{\Omega}} \int_0^{\frac{1}{2}} \phi((1-t)x+tS(x)) |x-S(x)| \,\,\mathrm{d}t\,\mathrm{d}f^+ (x),\,\,\,\,\mbox{for all}\,\,\,\,\phi \in C(\bar{\Omega}).$$ 
Yet, $\Omega=\bigcup_{i,j} \Omega_{i,j}^\pm$ and, for every $x \in \Omega_{i,j}^\pm$, we have $S(x)=x_{i,j}^\pm$. Hence, we find that
$$<\sigma^+,\phi>:=\sum_{i,j}\int_{\Omega_{i,j}^\pm} \int_0^{\frac{1}{2}} \phi((1-t)x+tx_{i,j}^\pm) |x-x_{i,j}^\pm| \,\,\mathrm{d}t\,\mathrm{d}f^+ (x),\,\,\,\,\mbox{for all}\,\,\,\,\phi \in C(\bar{\Omega}).$$
Set $\Gamma_{i,j}^-=\Omega_{i,j}^+ \cap \Gamma_i^-$ and $\chi_{i,j}^+=\Omega_{i,j}^- \cap \chi_i^+$, for all $i,\,j$. One has that, for all $\phi \in C(\bar{\Omega})$, 
$$<\sigma^+,\phi>=\sum_{i,j}\int_{\Gamma_{i,j}^-} \int_0^{\frac{1}{2}} \phi((1-t)x+tx_{i,j}^+) |x-x_{i,j}^+| \mathrm{d}t\mathrm{d}f^+  + \int_{\chi_{i,j}^+} \int_0^{\frac{1}{2}} \phi((1-t)x+tx_{i,j}^-) |x-x_{i,j}^-| \mathrm{d}t\mathrm{d}f^+.$$ 
We define $\sigma_{i,j}^{+\pm}$ as follows:
$$<\sigma_{i,j}^{++},\phi>=\int_{\Gamma_{i,j}^-} \int_0^{\frac{1}{2}} \phi((1-t)x+tx_{i,j}^+) |x-x_{i,j}^+| \,\mathrm{d}t\,\mathrm{d}f^+,\,\,\,\mbox{for all}\,\,\,\,\phi \in C(\bar{\Omega})$$
and
$$<\sigma_{i,j}^{+-},\phi>= \int_{\chi_{i,j}^+} \int_0^{\frac{1}{2}} \phi((1-t)x+tx_{i,j}^-) |x-x_{i,j}^-| \,\mathrm{d}t\,\mathrm{d}f^+,\,\,\,\mbox{for all}\,\,\,\,\phi \in C(\bar{\Omega}).$$ \\ 
In this way, we have 
$$\sigma^+=\sum_{i,j} \sigma_{i,j}^{++} + \sigma_{i,j}^{+-}.$$ 
We will prove that $\sigma_{i,j}^{+\pm}$ is in $L^p(\Omega)$, for all $i,\,j$, which implies by the way that $\sigma^+$ belongs to $L^p(\Omega)$. Fix $i,\,j$ and consider $\sigma_{i,j}^{++}$. Set $y=(1-t)x + t x_{i,j}^+$, for every $x \in \Gamma_{i,j}^-$ and $t \in [0,1/2]$. Then, one has  
$$<\sigma_{i,j}^{+ +},\phi>:= \int_{{\Omega_{i,j}^{+,\frac{1}{2}}}} \phi(y) \frac{|y-x_{i,j}^+|}{1-t} f^+\bigg(\frac{y-tx_{i,j}^+}{1-t}\bigg){J_{i,j}^+(y)}^{-1}\,\mathrm{d}y,\,\,\,\mbox{for all}\,\,\,\,\phi \in C(\bar{\Omega}),$$
where
$$J_{i,j}^+(y):=\det(D_{(t,x)} y)$$
and
$${\Omega_{i,j}^{+,\frac{1}{2}}}=\{(1-t)x + t x_{i,j}^+\,:\,x \in \Gamma_{i,j}^-,\,0 \leq t \leq 1/2\}.$$\\
An easy estimate for $J_{i,j}^+$ gives that
$$J_{i,j}^+(y)=(1-t) (x_{i,j}^+ - x) \cdot \nu(x),$$
where $\nu(x)$ is the outward normal vector to $\partial\Omega_-$ at $x$. As \,$x \in \Gamma_{i,j}^-$ and \,$x_{i,j}^+ \in \Gamma_{i,j}^+$, we have, by (H5), that
$$(x_{i,j}^+ - x) \cdot \nu(x) \geq c.$$
Consequently, we get
$$ \sigma_{i,j}^{++}(y) \leq \frac{C}{1-t} f^+\bigg(\frac{y-tx_{i,j}^+}{1-t}\bigg),\,\,\,\mbox{for a.e.}\,\,\,y \in {\Omega_{i,j}^{+,\frac{1}{2}}}.$$
Then, 
$$ ||\sigma_{i,j}^{++}||_{L^p(\Omega_{i,j}^+)}^p \leq \int_{\Omega_{i,j}^+}\frac{C^{p}}{(1-t)^{p}}\, {f^+\bigg(\frac{y-tx_{i,j}^+}{1-t}\bigg)}^p\,\mathrm{d}y\leq\int_{\Gamma_{i,j}^-}\int_{0}^{\frac{1}{2}}\frac{C^{p}}{(1-t)^{p-1}} {f^+(x)}^p\,\mathrm{d}t\,\mathrm{d}x$$
$$\leq \bigg(\int_{0}^{\frac{1}{2}}\frac{C^{p}}{(1-t)^{p-1}} \,\mathrm{d}t \bigg)\int_{\Gamma_{i,j}^-}{f^+(x)}^p\,\mathrm{d}x.$$
Similarly, we obtain 
$$ ||\sigma_{i,j}^{+-}||_{L^p(\Omega_{i,j}^-)}^p \leq \bigg(\int_{0}^{\frac{1}{2}}\frac{C^{p}}{(1-t)^{p-1}} \,\mathrm{d}t \bigg)\int_{\chi_{i,j}^+}{f^+(x)}^p\,\mathrm{d}x.$$
This implies that
$$||\sigma^+||_{L^p(\Omega)}^p= \sum_{i,j} ||\sigma_{i,j}^{++}||_{L^p(\Omega_{i,j}^+)}^p + ||\sigma_{i,j}^{+-}||_{L^p(\Omega_{i,j}^-)}^p $$
$$\leq \bigg(\int_{0}^{\frac{1}{2}}\frac{C^{p}}{(1-t)^{p-1}} \,\mathrm{d}t \bigg)\sum_{i,j}\bigg(\int_{\chi_{i,j}^+}{f^+(x)}^p\,\mathrm{d}x + \int_{\Gamma_{i,j}^-}{f^+(x)}^p\,\mathrm{d}x\bigg)\leq C^p\int_{\partial\Omega}{f^+(x)}^p\,\mathrm{d}x.$$\\ 
Passing to the limit when $n \to \infty$, we infer that the positive measure $\sigma^+$ between $f^+$ and $f^-$ is in $L^p(\Omega)$ as soon as $f^+ \in L^p(\partial\Omega)$. Moreover, $\sigma^+$ satisfies the following estimate: 
$$||\sigma^+||_{L^p(\Omega)}^p\leq C^p\int_{\partial\Omega}{f^+(x)}^p\,\mathrm{d}x.$$
But now, it is clear that if $f^- \in L^p(\partial\Omega)$, then one can obtain some $L^p$ estimates on $\sigma^-$ using an approximation of $f^+$ by an atomic sequence. Moreover, we get that
$$||\sigma^-||_{L^p(\Omega)}^p\leq C^p\int_{\partial\Omega}{f^-(x)}^p\,\mathrm{d}x.$$ 
Finally, we infer that
$$||\sigma||_{L^p(\Omega)}\leq C ||f||_{L^p(\partial\Omega)},\,\,\mbox{for every}\,\,p \in [1,\infty].$$ \\
On the other hand, if there is some $F_i^+$ which is not flat, then we can decompose the transport density $\sigma$ into three parts: $\sigma^{+-},\,\sigma^{-+}$ and $\sigma^{++}$, where $\sigma^{+-}$ denotes the transport density between $f^+_{|\,\partial\Omega^+ \backslash \cup_i F_i^{++}}$ and $f^-_{|\partial\Omega^-}$, $\sigma^{-+}$ denotes the transport density between $f^+_{|\partial\Omega^-}$ and $f^-_{|\,\partial\Omega^+ \backslash \cup_i F_i^{+-}}$ and $\sigma^{++}$ denotes the transport density between $f^+_{|\,\cup_i F_i^{++}}$ and $f^-_{|\,\cup_i F_i^{+-}}$. We have already seen that the two transport densities $\sigma^{+-}$ and $\sigma^{-+}$ are both in $L^p(\Omega)$ provided that $f \in L^p(\partial\Omega)$. Yet, if $\Omega_+$ is uniformly convex, then from \cite{Dweik} we have that $\sigma^{++} \in L^p(\Omega)$ as soon as $f \in L^p(\partial\Omega)$ with $p \leq 2$. This completes the proof.   $\qedhere$
\end{proof}
Finally, we get the following:
\begin{corollary}
Under the assumption that $F_i^+$ is a flat part for each $i$, the solution $u$ of the BV least gradient problem \eqref{shifted BV least gradient problem} belongs to $W^{1,p}(\Omega)$ as soon as $g \in W^{1,p}(\partial\Omega)$, for all $p \in [1,\infty]$. On the other hand, if there is some $i$ such that $F_i^+$ is not a flat part, then the same result holds for every $p \leq 2$ as soon as \,$\Omega_+$ is uniformly convex.   
\end{corollary}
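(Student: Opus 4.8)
The plan is to deduce the corollary directly from the preceding Theorem, using the equivalences established in Sections \ref{Sec. 3} and \ref{Sec. 4}. The key observation is that $W^{1,p}$ regularity of $u$ is equivalent to $L^p$ summability of the transport density $\sigma$: if $u$ solves \eqref{shifted BV least gradient problem}, then by Proposition \ref{prop:admissibility} the rotated gradient $v = R_{\frac{\pi}{2}} Du$ is admissible in \eqref{Beckmann problem} and, by the argument in the proof of Theorem \ref{thm:equivalence}, it is in fact optimal; since rotation by $\frac{\pi}{2}$ is an isometry of $\mathbb{R}^2$, one has $|Du| = |v|$ as measures on $\overline{\Omega}$, and recalling from Section \ref{Sec. 3} that every Beckmann minimizer is of the form $v = w_\gamma = -\sigma_\gamma\nabla\phi$ with $|\nabla\phi| = 1$ holding $\sigma_\gamma$-almost everywhere (on the transport rays), this gives $|Du| = |v| = \sigma$.

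First I would translate the hypothesis $g \in W^{1,p}(\partial\Omega)$ into a statement about $f$. Since $f = \partial_\tau g$, membership $g \in W^{1,p}(\partial\Omega)$ is equivalent to $f \in L^p(\partial\Omega)$, hence $f^\pm \in L^p(\partial\Omega)$ and, in particular (taking $p = 1$), $f$ is absolutely continuous with respect to $\mathcal{H}^1 \res \partial\Omega$, so $f^+$ is atomless. By Proposition \ref{Prop. 4.2} the optimal transport plan $\gamma$ for \eqref{Kanto} is then unique, and by Theorem \ref{thm:existence} the Beckmann problem \eqref{Beckmann problem} has a unique solution $v$, which moreover satisfies $|v|(\partial\Omega) = 0$. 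Consequently Theorem \ref{thm:equivalence} produces the solution $u \in BV(\Omega)$ of \eqref{shifted BV least gradient problem} with $v = R_{\frac{\pi}{2}} Du$, and the identification $|Du| = \sigma$ from the previous paragraph applies. Note that the uniqueness of $\sigma$ obtained here also makes the phrase "the solution $u$" well-posed.

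Then I would invoke the Theorem just proved: under (H5) and the assumption that every $F_i^+$ is a flat part, $\sigma \in L^p(\Omega)$ for all $p \in [1,\infty]$ whenever $f \in L^p(\partial\Omega)$; if instead some $F_i^+$ is not flat but $\Omega_+$ is uniformly convex, then $\sigma \in L^p(\Omega)$ for every $p \le 2$. In either case $|Du| = \sigma \ll \mathcal{L}^2$ with Radon--Nikodym density in $L^p(\Omega)$; hence $Du = \nabla u\,\mathcal{L}^2$ with $|\nabla u| = \sigma \in L^p(\Omega)$, which is precisely $u \in W^{1,p}(\Omega)$.

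The main obstacle I anticipate is the pointwise identity $|Du| = \sigma$ as measures, as opposed to the mere equality of total masses $\int_\Omega |Du| = \int_{\overline{\Omega}} |v| = \min\eqref{Kanto}$. This will rely on two facts already recorded in the text: that rotation by $\frac{\pi}{2}$ preserves the total variation measure of a vector measure, so $|R_{\frac{\pi}{2}} Du| = |Du|$; and that the optimal Beckmann flow satisfies $w_\gamma = -\sigma_\gamma\nabla\phi$ with $|\nabla\phi| = 1$ $\sigma_\gamma$-almost everywhere, so its total variation measure is exactly $\sigma_\gamma$. Once this identification is in hand, the corollary follows immediately from the Theorem.
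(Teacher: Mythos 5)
Your proposal is correct and follows essentially the same route the paper intends: the corollary is an immediate consequence of the $L^p$ bound on the transport density via the identification $v = R_{\frac{\pi}{2}}Du$ and $|Du| = |v| = \sigma$, which the paper states explicitly at the start of Section \ref{Sec. 5}. Your additional care in justifying the measure-level identity $|Du|=\sigma$ (via $w_\gamma=-\sigma_\gamma\nabla\phi$ with $|\nabla\phi|=1$ on the rays) and in checking that $f^+$ is atomless is a welcome filling-in of details the paper leaves implicit.
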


\section{Conclusions}\label{Sec. 6}

In this Section, we will present a few examples and closing remarks to show both the limits of the approach presented in Sections 3 \& 4 and the possible extensions of these results. In particular, we will see that while assumptions (H1)-(H4) are not optimal, they are close to optimal.

The first example concerns assumption (H1). We required the measure $f$ to be finite in order to use optimal transport techniques, which translates to the assumption $g \in BV(\partial\Omega)$ in the least gradient problem. However, in the setting of the least gradient problem alone we do not have to assume $g_+ \in BV(\partial\Omega_+)$ and the solutions might still exist.

\begin{example}
Let $\Omega = B(0,2) \backslash \overline{B(0,1)}$. Let $h: [1,2] \rightarrow [1,2]$ be an arbitrary continuous function with infinite total variation and such that $h(1) = 1$. Let $u_0 \in C \cap BV(B(0,2))$ be a solution to the least gradient problem on $B(0,2)$ with boundary data
$$ g_0 (x,y) = \twopartdef{1}{y < 1}{h(y)}{y \geq 1}$$
given by \cite[Theorem 3.7]{Sternberg}. Take the boundary data $g$ equal to $g_-(x,y) = y$ and 
$$g_+ (x,y) = \twopartdef{y}{y < 1}{h(y)}{y \geq 1.} $$
Then, even though condition (H1) is violated, the solution to the least gradient problem exists and equals
$$u(x,y) = \twopartdef{y}{y < 1}{u_0(x,y)}{y \geq 1.} $$
\end{example}

The second example concerns assumption (H2). It requires the boundary data on $\partial\Omega_+$ and $\partial\Omega_-$ to have the same number of monotonicity intervals and determines the total variation on these intervals. By Lemma \ref{lem:tvinequality}, we already know that $TV(g_-) \leq TV(g_+)$; let us see what can happen if the inequality is strict.

\begin{example}
Let $\Omega = B(0,2) \backslash \overline{B(0,1)}$ and set boundary data to equal $g_-(x,y) = 0$ and $g_+ (x,y) = y$. Then the solution to the least gradient problem does not exist.
\end{example}

The third example also concerns assumption (H2). It shows that intervals of monotonicity do not have to be separated by flat parts in order for a solution to exist. However, as we can see from Proposition \ref{stw:specialconfiguration}, this requires a very special configuration of the boundary data.

\begin{example}
Let $\Omega = B(0,2) \backslash \overline{B(0,1)}$ and set boundary data to equal $g(x,y) = y$. Then the solution to the least gradient problem exists even though condition (H2) is violated.
\end{example}

The fourth example concerns assumption (H3). It shows that if the intervals of monotonicity of the boundary data $g$ are not visible from one another, then the solution might not exist.

\begin{example}
Let $\Omega = B(0,M) \backslash \overline{B(0,M -\varepsilon)}$, where $M>0$ is a large constant and $\varepsilon>0$ is small enough. Set boundary data to equal 
$$g_-(x,y) = \threepartdef{0}{\,\,x < 0,}{x}{\,\,x \in [0,1],}{1}{\,\,x > 1,}$$
and
$$g_+(x,y) = \threepartdef{0}{\,\,y < 0,}{y}{\,\,y \in [0,1],}{1}{\,\,y > 1.}$$\\
Then, the visibility condition (H3) fails and if $u \in BV(\Omega)$, it is not possible that each connected component of $\{ u \geq t \}$ for $t \in (0,1)$ is a line segment $l \subset \overline{\Omega}$, hence there is no solution to the least gradient problem.
\end{example}

The second remark concerns an anisotropic version of the least gradient problem. Suppose that $\phi$ is a strictly convex norm on $\mathbb{R}^2$ and that consider the anisotropic least gradient problem with respect to $\phi$. As the only connected minimal surfaces with repsect to $\phi$ are line segments, in light of the analysis performed in \cite{Dweik} for strictly convex domains $\Omega$ we still have a one-to-one correspondence between gradients of $BV$ functions and vector-valued measures with zero divergence. This leads to the following:

\begin{remark}
Suppose that $\phi$ is a strictly convex norm on $\mathbb{R}^2$. Then there is a one-to-one correspondence between minimizers of the following problems:
\begin{equation} \label{Beckmann problem aniso}
  \min \bigg\{ \int_{\bar{\Omega}} \phi(R_{- \frac{\pi}{2}} v)\,:\, v \in \mathcal{M}(\overline{\Omega}; \mathbb{R}^2),\,\,\nabla\cdot v = f \bigg\}
\end{equation}
and 
\begin{equation} \label{shifted BV least gradient problem aniso}
   \min\bigg\{ \int_\Omega \phi(D u) \,:\,u \in BV(\Omega),\, \partial_\tau(Tu) = f \bigg\}.
\end{equation}
Moreover, both problems admit solutions under the assumptions (H1)-(H4) from Section 4 with $\phi$ replacing the Euclidean norm in condition (H4), and Theorem 5.1 also remains true in the anisotropic setting.
\end{remark}

Finally, let us note that the analysis undertaken in this paper could also be used to study the least gradient problem on general Lipschitz domains $\Omega \subset \mathbb{R}^2$, regardless of its homotopy type. In the following Remark, we highlight some results in this paper that could be easily retrieved for general Lipschitz domains.

\begin{remark}
Let $\Omega = \Omega^0 \backslash \bigcup_{i = 1}^N \overline{\Omega_i} \subset \mathbb{R}^2$, where $\Omega^0$ is an open bounded set with Lipschitz boundary and $\Omega_i \subset \subset \Omega^0$ are pairwise disjoint open bounded convex sets. Then, the distances between $\Omega^0$ and $\Omega_i$, as well as distances between $\Omega_i$ and $\Omega_j$ are bounded from below, hence we can reproduce the proof of Lemma \ref{lem:innerbv} and show that on every connected component $\partial\Omega_i$ of the boundary, except from the outer component $\partial\Omega^0$, the trace of a least gradient function has bounded variation. Hence, the assumption that $g \in BV(\partial \Omega)$ is sensible, and under this assumption we may prove equivalence between problems \eqref{Beckmann problem} and \eqref{shifted BV least gradient problem}.

However, if the homotopy type of the domain is highly nontrivial, or if $\Omega_i$ is not strictly convex, the admissibility conditions introduced in Section 4 and required for existence and uniqueness of minimizers would become much more complicated; the same applies to the discussion about $W^{1,p}$ regularity of the least gradient functions in Section 5. Therefore, we have restricted our reasoning to an annulus for clarity. 
\end{remark}

{\bf Acknowledgements.} The authors would like to thank Prof. Piotr Rybka for suggesting that optimal transport methods could be well suited for solving the least gradient problem on nonconvex domains. The work of W. Górny was partly supported by the research project no. 2017/27/N/ST1/02418, "Anisotropic least gradient problem", funded by the National Science Centre, Poland.

\bibliographystyle{plain}

\begin{thebibliography}{}

\bibitem{Beckmann}
{\sc M. Beckmann},
A continuous model of transportation,
{\it{Econometrica}} {\bf 20}, 643--660, 1952.

\bibitem{Bombieri}
{\sc E. Bombieri, E. de Giorgi and E. Giusti}, Minimal cones and the {Bernstein} problem, {\it Invent. Math.} {\bf 7}, 243--268, 1969.v

 \bibitem{DePas1}{\sc L. De Pascale, L. C. Evans and A. Pratelli,}  \newblock integral estimates for transport densities, 
    \newblock {\it Bull. of the London Math. Soc.}. 36, n. 3,pp. 383-395, 2004.
    
     \bibitem{DePas2}{\sc L. De Pascale and A. Pratelli,}
   \newblock Regularity properties for Monge Transport Density and for Solutions of some Shape Optimization Problem,
   \newblock {\it Calc. Var. Par. Diff. Eq}. 14, n. 3, pp.249-274, 2002.
 
  \bibitem{DePas3}{\sc  L. De Pascale and A. Pratelli, }
  \newblock  Sharp summability for Monge Transport density via  Interpolation,
   \newblock{\it ESAIM Control Optim. Calc. Var.} 10, n. 4, pp. 549-552, 2004.
\bibitem{Dweik}
{\sc S. Dweik and F. Santambrogio},
$L^p$ bounds for boundary-to-boundary transport densities, and $W^{1,p}$ bounds for the BV least gradient problem in 2D,
{\it{Calc. Var. Partial Differential Equations}} {\bf 58}, no. 1, 31, 2019.

  \bibitem{DweSan}{\sc S. Dweik and F. Santambrogio,}
\newblock{Summability estimates on transport densities with Dirichlet regions on the boundary via symmetrization techniques,}
\newblock{\it preprint arXiv:1606.00705, 2016.}

\bibitem{EvansGariepy}
{\sc L.C. Evans and R.F. Gariepy}, {\it Measure theory and fine properties of functions}, CRC Press, Boca Raton, 1992.


\bibitem{Giusti}
{\sc E. Giusti}, {\it Minimal surfaces and functions of bounded variation}, Birkh\"{a}user, Basel, 1984.

\bibitem{Gorny}  {\sc W. G\'orny}, Planar least gradient problem: existence, regularity and  anisotropic case,
{\it{Calc. Var. Partial Differential Equations}} {\bf 57}, no. 4, 98, 2018.

\bibitem{GornyGen}
{\sc W. G\'orny}, Existence of minimisers in the least gradient problem for general boundary data, {\it Indiana Univ. Math J.}, to appear.

\bibitem{GornyNonstconv}
{\sc W. G\'orny}, Least gradient problem with respect to a non-strictly convex norm, {\it arXiv:1806.01921}, 2018.

\bibitem{Gorny0}
{\sc W. G\'orny, P. Rybka and A. Sabra},
Special cases of the planar least gradient problem, {\it{ Nonlinear Anal.}} {\bf 151}, 66-–95, 2017.

\bibitem{Kanto}
{\sc L. Kantorovich},
On the transfer of masses,
{\it{Dokl. Acad. Nauk. USSR}}, (37), 7-8, 1942.

\bibitem{Mazon}
{\sc J.M. Maz\'on, J.D. Rossi and S. Segura de L\'eon},  Functions of  least gradient  and 1-harmonic
functions, {\it{Indiana Univ. Math. J.}} {\bf 63}, no. 4, 1067--1084, 2014.

\bibitem{Monge}
{\sc G. Monge},
M\'emoire sur la th\'eorie des d\'eblais et des remblais,
{\it{Histoire de l’Acad\'emie Royale des Sciences
de Paris}}
(1781), 666–704.

\bibitem{Moradifam}{\sc{A. Moradifam}}, Existence and   structure of minimizers of least gradient problems,
{\it{Indiana Univ. Math. J.}} {\bf 67}, no. 3, 1025--1037, 2018.

\bibitem{Sabra}{\sc P. Rybka and A. Sabra},
The planar Least Gradient problem in convex domains, the case of continuous datum, {preprint}, 2019.

 \bibitem{San09}{\sc F. Santambrogio, }
\newblock  Absolute continuity and summability of transport densities: simpler proofs and new estimates,
\newblock {\it Calc. Var. Par. Diff. Eq.} (2009) 36: 343-354.

\bibitem{San2} {\sc F. Santambrogio}, {\it Optimal Transport for Applied Mathematicians}, in {\it Progress in Nonlinear Differential Equations and Their Applications}  87, Birkh\"auser, Basel, 2015.

\bibitem{Spradlin}{\sc G. S. Spradlin and A. Tamasan}, Not all traces on the circle come from functions of least gradient
in the disk,
{\it{Indiana Univ. Math. J.}}
{\bf 63}, no. 6, 1819--1837, 2014.

\bibitem{Sternberg}{\sc P. Sternberg, G. Williams and W.P. Ziemer}, Existence, uniqueness, and regularity for functions of least gradient,
{\it{J. Reine Angew. Math.}},
{\bf 430}, 35--60, 1992. 

\bibitem{Ziemer}{\sc P. Sternberg and W.P. Ziemer}, The {Dirichlet} problem for functions of least gradient, in
{\it{Degenerate diffusions. {The} {IMA} {Volumes} in {Mathematics} and its {Applications}}}, 197--214, 1993. 

\bibitem{Villani}
{\sc C. Villani},
{\it{Topics in Optimal Transportation}},
Graduate Studies in Mathematics. Vol. 58, 2003.
\end{thebibliography}

\end{document}